\newcommand{\HK}{\tn{HK}}
\newcommand{\W}{\tn{W}}
\newcommand{\CE}{\mc{CE}}
\newcommand{\CES}{\mc{CES}}
\newcommand{\Cos}{\ol{\cos}}
\newcommand{\cHK}{c}
\newcommand{\JHKSM}{J_{\tn{SM}}}
\newcommand{\proj}{\tn{P}}
\newcommand{\cont}{\mathrm{C}}
\newcommand{\Lebesgue}{\mc{L}}
\newcommand{\Log}{\tn{Log}}
\newcommand{\Exp}{\tn{Exp}}
\newcommand{\WLin}{{\W_{\tn{lin}}}}
\newcommand{\PiOpt}{\Pi_{\mathrm{opt}}}
\newcommand{\HKLin}{{\HK_{\mathrm{lin}}}}
\newcommand{\measL}{\meas_{\Lebesgue}}
\newcommand{\measpL}{\meas_{+,\Lebesgue}}
\newcommand{\probL}{\meas_{1,\Lebesgue}}
\definecolor{darkblue}{rgb}{0.1,0.1,0.6}
\def\l{\left(}
\def\r{\right)}
\def\lda{\left\|}
\def\rda{\right\|}
\def\Lp#1{\textrm{\textnormal{L}}^{#1}}
\DeclareMathOperator{\re}{re}
\DeclareMathOperator{\im}{im}
\title{The Linearized Hellinger--Kantorovich Distance}
\author{Tianji Cai\footnote{Department of Physics, University of California, Santa Barbara, US}, Junyi Cheng\footnotemark[1], Bernhard Schmitzer\footnote{Campus Institute Data Science, Universit\"at G\"ottingen, G\"ottingen, Germany}, Matthew Thorpe\footnote{Department of Mathematics, University of Manchester, Manchester, UK}}
\date{\today}
\begin{document}
\maketitle
\begin{abstract}
In this paper we study the local linearization of the Hellinger--Kantorovich distance via its Riemannian structure. We give explicit expressions for the logarithmic and exponential map and identify a suitable notion of a Riemannian inner product. Samples can thus be represented as vectors in the tangent space of a suitable reference measure where the norm locally approximates the original metric. Working with the local linearization and the corresponding embeddings allows for the advantages of the Euclidean setting, such as faster computations and a plethora of data analysis tools, whilst still enjoying approximately the descriptive power of the Hellinger--Kantorovich metric.
\end{abstract}

\begin{keywords}
optimal transport, linear embeddings, Hellinger--Kantorovich Distance, exponential and logarithmic maps
\end{keywords}

\begin{AMS}
  49M30, 49K20, 49N90
\end{AMS}


\section{Introduction}
\subsection{Overview}
Optimal transport or Wasserstein distances, due to their Lagrangian properties, often better capture variations in signals and images than `pointwise' similarity measures such as Euclidean $\Lp{p}$-norms or the Kullback--Leibler divergence.
Wasserstein distances, in particular the Wasserstein-2 ($\W_2$) distance, come with a rich geometric structure, such as geodesics, barycenters \cite{WassersteinBarycenter} and a weak Riemannian structure (see Section \ref{sec:W2Riemann} for more details).
A well developed theory, see for example~\cite{Villani-OTOAN2008,Villani-TOT2003,Santambrogio-OTAM,AmbrosioGradientFlows2005}, and success in a range of applications, for example~\cite{Kolouri:2017} for an overview, have made the distance a valuable tool in the analysis of images and signals.
However, there are two major drawbacks of the Wasserstein distance.
The first is, despite recent numerical advances (see \cite{PeyreCuturiCompOT} for an overview), it is still relatively expensive to compute the Wasserstein distance, particularly in large and high dimensional data sets.
The second is that Wasserstein distances are only meaningful for measures of equal mass, and more importantly, the mass distributions of the compared measures must be matched exactly. This makes the optimal matching susceptible to noise in the form of small, but non-local mass fluctuations. Consequently, small perturbations can suppress more relevant features.

\subsection{Linearized optimal transport}
In~\cite{OptimalTransportTangent2012} it was proposed to linearize the $\W_2$ distance by exploiting its (weak) Riemannian structure.
Formally, the manifold is approximated locally by a tangent space at a reference point.
By the logarithmic map samples are embedded into the tangent space which is a Hilbert space with an inner product (see Section \ref{sec:W2Lin} for more details).
The norm of each embedded sample equals its $\W_2$ distance to the reference measure. And the (Hilbertian) distance between two embedded samples approximately equals the $\W_2$ distance between the two original samples.
One advantage of this strategy is a reduction of computational cost: to embed $n$ samples into the tangent space, one needs to solve $n$ optimal transport problems, whereas computing the full pairwise metric matrix requires to solve $O(n^2)$ problems.
Another advantage is the linear structure on the space of embeddings, compared to the non-linear metric structure of the original Wasserstein space, that allows for the application of methods from data analysis, for example principle component analysis, that have been primarily developed for linear settings.
But since the obtained embedding is (at least locally) approximating the $\W_2$ distance we expect the tangent linear structure to still enjoy many of the properties of the $\W_2$ distance such as the cost of translations, as opposed to the naive linear structure on the space of measures or densities.

For example,~\cite{park18} used the linear Wasserstein setting to generate new images by first clustering in the linear Wasserstein space, then learning the principal directions in the tangent planes for each cluster.
Hence, new images were generated using Euclidean data analysis techniques, such as $k$-means and principal component analysis (PCA), whilst keeping the Wasserstein flavor.
Other applications of the linear Wasserstein space have included a PCA based approach for super resolution on faces~\cite{kolouri15}, and classification, using a Fisher linear discriminant analysis technique, on images of nuclei~\cite{ozolek14}.

\subsection{Hellinger--Kantorovich distance}
As regards the second disadvantage of the Wasserstein distances raised above, the recently proposed Hellinger--Kantorovich (HK) distance~\cite{LieroMielkeSavare-HellingerKantorovich-2015a,ChizatDynamicStatic2018,ChizatOTFR2015,KMV-OTFisherRao-2015} defines a metric between non-negative measures by allowing for the creation/destruction of mass within the optimal transport framework.
In particular, the HK distance allows mass to be transported, created and destroyed, whereas the Wasserstein distances only allowed for mass to be transported.

One way to define the HK distance is via a modification of the famous Benamou--Brenier functional for the $\W_2$ distance \cite{BenamouBrenier2000} where one adds an additional source term to the continuity equation and a corresponding penalty in the cost functional by the Riemannian tensor of the Hellinger--Kakutani or Fisher--Rao metrics (see Section \ref{sec:HKBB} for details). This implies that HK also enjoys a weak Riemannian structure which was one of the motivations for its introduction, see \cite{LieroMielkeSavare-HK2-2016}.
As of now this structure is still much less understood than the $\W_2$ case.
Equivalently, HK can be computed via a particular `soft-marginal' Kantorovich-type transport problem \cite{LieroMielkeSavare-HellingerKantorovich-2015a} (see Section \ref{sec:HKKantorovich} for details) which lends itself to numerical approximation via entropic regularization \cite{ChizatEntropicNumeric2018}.

HK is a particular variant of an `unbalanced' transport problem and many other models to combine transport and creation/destruction are conceivable, see for instance \cite{DNSTransportDistances09,Caffarelli-McCann-FreeBoundariesOT-2010,PiccoliRossi-GeneralizedWasserstein2014,SchmitzerWirthUnbalancedW1-2017}. Some discussion is also provided in \cite{ChizatOTFR2015}.

\subsection{Contribution and outline}

The main contribution of this paper is the adaptation of the linearization strategy for the $\W_2$ distance \cite{OptimalTransportTangent2012} to the HK distance. We introduce logarithmic and exponential maps for HK, discuss how to approximate them computationally and demonstrate their usefulness for transport-based data analysis in numerical examples.
\vspace{\baselineskip}

The paper is organized as follows.
In the remainder of the introduction, for convenience and clarity, we define our notation.

In Section~\ref{sec:W2} we recap the $\W_2$ distance, with an emphasis on the Benamou--Brenier formulation~\cite{BenamouBrenier2000} (Section \ref{sec:W2BB}). Then, via the Kantorovich formulation (Section~\ref{sec:W2Kant}), we recall how to construct geodesics from optimal Kantorovich transport plans (Section~\ref{sec:W2Geo}).
A discussion of the Riemannian structure (Section~\ref{sec:W2Riemann}) leads us to recall the definition of the linearized Wasserstein distance from~\cite{OptimalTransportTangent2012} (Section~\ref{sec:W2Lin}).

Section~\ref{sec:HK} follows Sections~\ref{sec:W2BB}-\ref{sec:W2Geo} but now in the setting of the HK distance.
In particular, we start with defining the HK distance in the Benamou--Brenier setting before recalling a Kantorovich formulation, and finally construct geodesics from optimal Kantorovich transport plans.
While this construction (Proposition \ref{prop:HKGeodesicsSoftMarginal}) is implicitly already contained in \cite{LieroMielkeSavare-HellingerKantorovich-2015a} (and to a lesser extent in \cite{ChizatDynamicStatic2018}) and presumably known to the experts in the field, we are not aware of an explicit statement in the literature and believe that it is of practical relevance to those that want to familiarize themselves with the HK metric.

The linearization of HK is described in Section \ref{sec:HKLin}.
In Section~\ref{sec:HKRiemann} we gain some intuition for its Riemannian structure and identify a candidate for the logarithmic map, given as an explicit function from an optimal Kantorovich-type transport plan.
While for $\W_2$ a tangent vector is represented by a velocity field, as expected for HK one obtains an additional scalar mass creation/destruction field. This field can become singular in the case where mass is created from nothing, leading to a third, measure-valued, tangent component.
This third component may be considered undesirable in some applications and we discuss sufficient conditions to ensure that it remains zero (Remark \ref{rem:HKEuc}) so that one obtains an embedding into a Hilbert space.
The definition of logarithmic map and local linearization are formalized in Section \ref{sec:HKLog} where we also show that the logarithmic map behaves as expected along geodesics. Finally, in Section \ref{sec:HKExp} we give an explicit form of the corresponding exponential map.

While the formulas may seem considerably more complex compared to the Wasserstein case, from a numerical perspective the local linearization via HK is not much harder to perform than for $\W_2$. The involved transport problems can be solved in a Kantorovich-type formulation, for instance with an adapted Sinkhorn algorithm, and an approximate logarithmic map can be extracted from the optimal coupling with explicit formulas, leading to an embedding into a Hilbert space (see again Remark \ref{rem:HKEuc}, as above), which becomes finite-dimensional after discretization (similar numerical approximations apply, see Section \ref{sec:W2Lin} and Remark \ref{rem:HKBarycentric}).
The remaining challenge is to fix the intrinsic length-scale of the HK metric appropriately (see Remark \ref{rem:HKLengthScale}). This single real-valued parameter can be determined by standard validation procedures.

To underscore this, we conclude the article with numerical examples in Section \ref{sec:Numerics} that also demonstrate the advantages of linearized HK over the more classical $\W_2$ linearization. The computational preliminaries are briefly summarized in Section \ref{sec:NumericsPrel}, before we start with a toy example for illustration (Section \ref{sec:NumericsEllipses}) and then two examples on more realistic datasets, cell microscopy images (Section \ref{sec:NumericsCells}) and particle collider events (Section \ref{sec:NumericsJets}).

\subsection{Preliminaries and notation}
\begin{itemize}
	\item $\Omega$ is a convex, closed, bounded subset of $\R^d$ with non-empty interior.
	\item For a compact metric space $X$, $\cont(X)$ denotes the space of continuous real-valued functions over $X$, $\cont(X)^n$ the space of continuous $\R^n$-valued functions.
	\item $\meas(X)$ denotes the space of signed Radon measures over $X$, $\measp(X)$ denotes the space of non-negative Radon measures, $\prob(X)$ the set of probability measures, and the space of $\R^n$-valued measures is denoted by $\meas(X)^n$.
	\item We identify $\meas(X)$ and $\meas(X)^n$ with the dual spaces of $\cont(X)$ and $\cont(X)^n$.
	\item The Lebesgue measure on various domains is denoted by $\Lebesgue$. We add a subscript when the domain is not clear from the context. The set of Lebesgue-absolutely continuous measures is denoted by $\measL(X)$, and $\probL(X)\assign \measL(X) \cap \prob(X)$.
	\item Let $A$ and $B$ be two compact metric spaces and let $\mu : B \to \meas(A)$, $\nu \in \meas(B)$. If there exists a measure $\rho \in \meas(A)$ that satisfies
	\begin{align*}
		\int_A \varphi(a)\,\diff \rho(a) = \int_B \left[ \int_A \varphi(a)\,\diff \big(\mu(b)\big)(a) \right] \diff \nu(b)
	\end{align*}
	for every $\varphi \in \cont(A)$ then we call $\rho$ the Pettis integral of $\mu$ with respect to $\nu$ and write
	\begin{align*}
		\rho = \int_B \mu(b)\,\diff \nu(b).
	\end{align*}
	The same notation applies when $\mu$ is a function into vector-valued measures.
	\item The Euclidean norm and inner product on $\R^n$ are denoted by $\|\cdot\|$ and $\langle\cdot,\cdot\rangle$ respectively, we will also use $\|\cdot\|$ as the norm on $\C$ and the total variation norm on measures (which for positive measures $\mu\in\measp(\Omega)$ is just $\|\mu\|=\mu(\Omega)$).
\end{itemize}
\vspace{\baselineskip}

\section{Wasserstein-2 distance} \label{sec:W2}
In this section we review some basic properties of the Wasserstein-2 distance $\W_2$ on $\prob(\Omega)$.
While they may be familiar to many readers this review serves as a preparation for the Hellinger--Kantorovich distance $\HK$.
Our presentation of $\W_2$ is done in a different order than usual and some expressions could be given more compactly. These deviations are made to keep the presentations of $\W_2$ and $\HK$ as symmetric as possible.
\vspace{\baselineskip}

\subsection{Benamou--Brenier formulation} \label{sec:W2BB}
We start with the dynamic formulation by Benamou and Brenier that computes $\W_2$ by minimizing an action functional over solutions to the continuity equation~\cite{BenamouBrenier2000}.

\begin{definition}[Continuity equation]
	\label{def:WContEq}
	For $\mu_0,\mu_1 \in \prob(\Omega)$ we denote by $\CE(\mu_0,\mu_1)$ the set of solutions for the continuity equation on $[0,1] \times \Omega$, i.e.~the set of pairs of measures $(\rho,\omega) \in \meas([0,1] \times \Omega)^{1+d}$ where $\rho$ interpolates between $\mu_0$ and $\mu_1$ and that solve
	\begin{align*}
		\partial_t \rho + \ddiv \omega = 0
	\end{align*}
	in a distributional sense. More precisely, we require for all $\phi \in C^1([0,1] \times \Omega)$ that
	\begin{align}
		\int_{[0,1] \times \Omega} \partial_t \phi\,\diff \rho
		+ \int_{[0,1] \times \Omega} \nabla \phi \cdot \diff \omega
		=
		\int_\Omega \phi(1,\cdot)\,\diff \mu_1 - \int_\Omega \phi(0,\cdot)\,\diff \mu_0.
	\end{align}
\end{definition}
\begin{definition}[Wasserstein-2 distance, dynamic formulation \cite{BenamouBrenier2000}]
\label{def:WBB}
Let $J_{\W} : \meas([0,1] \times \Omega)^{1+d} \to \RCupInf$ be given by
\begin{subequations}
\label{eq:WBB}
\begin{align}
	\label{eq:WBBJ}
	J_{\W}(\rho,\omega) & \assign \begin{cases}
			\int_{[0,1] \times \Omega} \|\RadNik{\omega}{\rho}\|^2\diff \rho
				& \tn{if } \rho \geq 0, \omega \ll \rho \\
				+ \infty & \tn{else.}
		\end{cases}
\end{align}
Then for $\mu_0,\mu_1 \in \prob(\Omega)$ we set
\begin{align}	
	\label{eq:WBBMin}
	\W_2(\mu_0,\mu_1)^2 & \assign \inf \left\{
		J_{\W}(\rho,\omega)
		\middle| (\rho,\omega) \in \CE(\mu_0,\mu_1)
		\right\}.
\end{align}
\end{subequations}
\end{definition}

It is well known that $\W_2$ is a metric on $\prob(\Omega)$.
Minimizers of \eqref{eq:WBBMin} exist and will be referred to as \emph{constant speed geodesics} between $\mu_0$ and $\mu_1$ with respect to $\W_2$.

\begin{remark}
	\label{rem:TimeNotation}
	For $(\rho,\omega) \in \CE(\mu_0,\mu_1)$ with $J_{\W}(\rho,\omega)<\infty$ it is easy to see that the time-marginals of $\rho$ and $\omega$ are Lebesgue-absolutely continuous.
	Therefore, it is often convenient to describe measures $\rho$ and $\omega$ via their disintegration with respect to time.
	For instance, by specifying $\rho_t \in \meas(\Omega)$ for Lebesgue-almost every $t \in [0,1]$ we characterize a measure $\rho \in \meas([0,1] \times \Omega)$ by setting
	\begin{align*}
		\int_{[0,1] \times \Omega} \phi\,\diff \rho \assign \int_{[0,1]} \int_\Omega \phi(t,\cdot)\,\diff \rho_t\,\diff t
	\end{align*}
	for all $\phi \in \cont([0,1] \times \Omega)$. We write $\rho=\rho_t \otimes \diff t$. Similarly we will proceed for $\omega$ and other measures used later on.
\end{remark}

\begin{proposition}[$\W_2$ distance and geodesics between Dirac measures]
\label{prop:WDirac}
For $x_0, x_1 \in \Omega$ one finds
\begin{align}
	\W_2(\delta_{x_0},\delta_{x_1})^2 & = \|x_0-x_1\|^2\,.
\end{align}
Let
\begin{align}
	X(x_0,x_1;t) & = (1-t)\,x_0+t\,x_1\,.
		\label{eq:WDiracGeodesicX}
\end{align}
The unique constant speed geodesic between $\delta_{x_0}$ and $\delta_{x_1}$ for the $\W_2$ metric is given by (cf.~Remark \ref{rem:TimeNotation})
\label{eq:WDiracGeodesic}
\begin{align}
	\rho_t & = \delta_{X(x_0,x_1;t)}\, &
	\omega_t & = \delta_{X(x_0,x_1;t)} \cdot \partial_t X(x_0,x_1;t)\,.
\end{align}
\end{proposition}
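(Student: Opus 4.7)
The plan is to prove the distance formula by showing both inequalities and then identify the minimizer explicitly. First, I would verify that the proposed pair $(\rho_t \otimes \diff t,\, \omega_t \otimes \diff t)$ lies in $\CE(\delta_{x_0},\delta_{x_1})$ and that $J_{\W}$ evaluates to $\|x_0-x_1\|^2$. The curve $X(t)\assign X(x_0,x_1;t)$ has constant velocity $x_1-x_0$, so for any $\phi\in C^1([0,1]\times\Omega)$ the chain rule yields
\begin{align*}
\int_0^1 \partial_t \phi(t,X(t))\,\diff t + \int_0^1 \nabla\phi(t,X(t))\cdot(x_1-x_0)\,\diff t = \phi(1,x_1)-\phi(0,x_0),
\end{align*}
which is exactly the weak continuity equation with the right boundary data. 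Since $\omega\ll\rho$ with Radon--Nikodym derivative $x_1-x_0$, the action is $\int_{[0,1]\times\Omega}\|x_1-x_0\|^2\,\diff\rho = \|x_1-x_0\|^2$, giving the upper bound $\W_2(\delta_{x_0},\delta_{x_1})^2\le\|x_0-x_1\|^2$.

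For the matching lower bound I would test an arbitrary admissible $(\rho,\omega)\in\CE(\delta_{x_0},\delta_{x_1})$ with finite action against the linear function $\phi(t,x)=\langle v,x\rangle$ for $v\in\R^d$. The continuity equation then reduces to
\begin{align*}
\langle v,x_1-x_0\rangle = \int_{[0,1]\times\Omega} v\cdot\diff\omega = \int_{[0,1]\times\Omega}\Bigl\langle v,\tfrac{\diff\omega}{\diff\rho}\Bigr\rangle\,\diff\rho,
\end{align*}
and Cauchy--Schwarz (once in $\R^d$, once in $\Lp{2}(\rho)$) plus $\rho([0,1]\times\Omega)=1$ (obtained from testing with $\phi\equiv 1$ combined with the Lebesgue time-marginal property from Remark~\ref{rem:TimeNotation}) gives $\langle v,x_1-x_0\rangle^2\le \|v\|^2\cdot J_{\W}(\rho,\omega)$. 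Choosing $v=x_1-x_0$ yields $\|x_1-x_0\|^2\le J_{\W}(\rho,\omega)$, proving the reverse inequality.

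For uniqueness of the constant speed geodesic, the main observation is that equality in the two Cauchy--Schwarz steps above is only possible when $\tfrac{\diff\omega}{\diff\rho}$ is $\rho$-a.e. equal to the constant vector $x_1-x_0$. Hence any minimizer has $\omega = (x_1-x_0)\,\rho$, and the continuity equation reduces to a pure transport equation with constant velocity and initial datum $\delta_{x_0}$, whose unique distributional solution is $\rho_t=\delta_{X(t)}$ by the standard method of characteristics (or a direct test-function computation). This then forces $\omega_t = \delta_{X(t)}\cdot(x_1-x_0)$, matching \eqref{eq:WDiracGeodesic}.

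The main potential obstacle is ensuring the Cauchy--Schwarz chain is rigorously applicable (i.e.\ that $v\cdot\tfrac{\diff\omega}{\diff\rho}\in\Lp{2}(\rho)$ whenever $J_{\W}(\rho,\omega)<\infty$) and that $\rho$ has unit total mass so the normalizing factor is $1$; both are immediate from the definition of $J_{\W}$ and from testing the continuity equation with time-independent constants, so no real difficulty arises. All remaining verifications are routine manipulations of distributional derivatives.
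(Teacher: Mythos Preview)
Your proof is correct and self-contained. The paper itself does not give a proof of this proposition at all; it simply states the result and follows it with a one-sentence remark that $X(x_0,x_1;\cdot)$ parametrizes the straight line and that a Dirac-to-Dirac geodesic is a single travelling Dirac. So your argument is substantially more detailed than what the paper offers, and since the proposition is placed immediately after the Benamou--Brenier definition (before the Kantorovich formulation is even introduced), your direct dynamic approach is the natural one in context.

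One small imprecision: testing the continuity equation with $\phi\equiv 1$ gives $0=0$ and does not by itself yield $\rho([0,1]\times\Omega)=1$. What actually works is testing with $\phi(t,x)=\psi(t)$ for arbitrary $\psi\in C^1([0,1])$, which shows that the time marginal of $\rho$ equals Lebesgue measure on $[0,1]$ (this is essentially what Remark~\ref{rem:TimeNotation} alludes to), and hence $\rho([0,1]\times\Omega)=1$. With that fix the Cauchy--Schwarz chain and the equality analysis go through exactly as you describe.
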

For fixed $x_0,x_1 \in \Omega$, $X(x_0,x_1;\cdot)$ parametrizes the straight line between them at constant speed, which is the constant speed geodesic in $\Omega$. A Dirac-to-Dirac geodesic in $W_2$ consists of a single Dirac traveling along this line.
\vspace{\baselineskip}

\subsection{Kantorovich formulation} \label{sec:W2Kant}
Of course there is also the `original' static Kantorovich formulation of $\W_2$.
\begin{proposition}[Kantorovich-type formulation of $\W_2$]
\label{prop:WStatic}
Let $\mu_0, \mu_1 \in \prob(\Omega)$. Then one has
\begin{align}
	\W_2(\mu_0,\mu_1)^2 & = \inf\left\{ \int_{\Omega^2} \|x_0-x_1\|^2\,\diff \pi(x_0,x_1)
		+ \sum_{i\in \{0,1\}} \iota_{\{\mu_i\}}(\proj_{i\sharp} \pi) \,\middle|\,
		\pi \in \prob(\Omega^2)
		\right\}
	\label{eq:WStatic}
\end{align}
where $\iota_{\{\mu_i\}}$ denotes the indicator function of $\{\mu_i\}$, i.e.
\begin{align*}
	\iota_{\{\mu_i\}}(\nu) & \assign \begin{cases}
		0 & \tn{if } \nu=\mu_i, \\
		+\infty & \tn{else.}
		\end{cases}
\end{align*}
and $\proj_i : \Omega \times \Omega \to \Omega$, $(x_0,x_1) \mapsto x_i$ are the projections from the product space onto the marginals.
The set $\Pi(\mu_0,\mu_1)=\{\pi \in \prob(\Omega^2)\,|\, \proj_{i\sharp} \pi=\mu_i \tn{ for } i=0,1\}$ is called the set of transport plans or couplings between $\mu_0$ and $\mu_1$.
Moreover, minimal $\pi$ in \eqref{eq:WStatic} exist and we will denote the set of $\pi\in\prob(\Omega^2)$ that minimize \eqref{eq:WStatic} for $\W_2(\mu_0,\mu_1)$ by $\PiOpt(\mu_0,\mu_1)$.
If $\mu_0 \ll \Lebesgue$ then the minimizer $\pi$ in~\eqref{eq:WStatic} is unique and lives on the graph of a Monge map, i.e.~$\pi=(\id,T)_\sharp \mu_0$ for some measurable $T : \Omega \to \Omega$.
\end{proposition}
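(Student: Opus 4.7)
The plan is to prove the equality in \eqref{eq:WStatic} by establishing both inequalities, and then to handle existence of minimizers and the Monge structure separately. The Dirac-to-Dirac constant speed geodesic from Proposition \ref{prop:WDirac} provides the basic building block linking the dynamic and static viewpoints.

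For the inequality ``$\W_2(\mu_0,\mu_1)^2 \leq \int_{\Omega^2}\|x_0-x_1\|^2\,d\pi$'' for any $\pi \in \Pi(\mu_0,\mu_1)$, I would superimpose Dirac geodesics by setting, in the Pettis-integral sense defined in the preliminaries,
\[
(\rho,\omega) \assign \int_{\Omega^2} (\rho^{x_0,x_1},\,\omega^{x_0,x_1})\,d\pi(x_0,x_1),
\]
where $(\rho^{x_0,x_1},\omega^{x_0,x_1})$ denotes the Dirac geodesic supplied by Proposition \ref{prop:WDirac}. Linearity of the constraints in Definition \ref{def:WContEq} ensures $(\rho,\omega)\in\CE(\mu_0,\mu_1)$, and the joint convexity of $J_{\W}$ (its integrand $(\rho,\omega)\mapsto\|\omega/\rho\|^2\rho$ being the perspective of a convex function) combined with Jensen's inequality yield
\[
J_{\W}(\rho,\omega) \leq \int_{\Omega^2} J_{\W}(\rho^{x_0,x_1},\omega^{x_0,x_1})\,d\pi = \int_{\Omega^2}\|x_0-x_1\|^2\,d\pi.
\]

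For the reverse inequality I would invoke the superposition principle for the continuity equation (Lisini, see \cite{AmbrosioGradientFlows2005}): every $(\rho,\omega)\in\CE(\mu_0,\mu_1)$ with $J_{\W}(\rho,\omega)<\infty$ admits a representation $\rho_t=(e_t)_\sharp\eta$, $\omega_t=(e_t)_\sharp(\dot\gamma_t\,\eta)$ for some $\eta\in\prob(\mathrm{AC}^2([0,1];\Omega))$ satisfying $J_{\W}(\rho,\omega)=\int\int_0^1\|\dot\gamma_t\|^2\,dt\,d\eta(\gamma)$. Setting $\pi\assign(e_0,e_1)_\sharp\eta\in\Pi(\mu_0,\mu_1)$ and applying Cauchy--Schwarz curvewise ($\|\gamma(1)-\gamma(0)\|^2\leq\int_0^1\|\dot\gamma_t\|^2\,dt$) then yields $\int\|x_0-x_1\|^2\,d\pi\leq J_{\W}(\rho,\omega)$, completing the equality.

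Existence of minimizers follows by the direct method: $\Pi(\mu_0,\mu_1)$ is weakly-$\ast$ closed and tight (since $\Omega$ is compact), hence weakly-$\ast$ compact, and $\pi\mapsto\int\|x_0-x_1\|^2\,d\pi$ is weakly-$\ast$ continuous as the integrand is bounded and continuous on $\Omega^2$. Uniqueness together with the Monge structure under $\mu_0\ll\Lebesgue$ is Brenier's theorem: expanding $\|x_0-x_1\|^2$ reduces the problem to maximizing $\int\langle x_0,x_1\rangle\,d\pi$; $c$-cyclical monotonicity of the support of any optimal plan, combined with Rockafellar's theorem, places this support in the subdifferential of a convex function $\varphi$, and Lebesgue-a.e.\ differentiability of convex functions then forces $\pi=(\id,\nabla\varphi)_\sharp\mu_0$. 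The main technical obstacles are the superposition principle and Brenier's theorem themselves; both are classical and I would cite them from \cite{AmbrosioGradientFlows2005,Villani-TOT2003,Santambrogio-OTAM} rather than reproduce the arguments.
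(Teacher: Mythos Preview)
Your proposal is correct, but note that the paper does not actually prove this proposition: it simply records that the equivalence of dynamic and static formulations, existence of minimizers, and the Brenier/Monge structure are classical, and defers to \cite[Theorem 8.1]{Villani-TOT2003}, \cite[Theorem 5.28]{Santambrogio-OTAM}, and related references. Your sketch therefore already goes substantially beyond the paper's treatment. The two directions you outline (superposition of Dirac geodesics plus convexity of $J_{\W}$ for one inequality, and the Ambrosio--Gigli--Savar\'e/Lisini superposition principle for the other) are precisely the standard route used in those references, and your handling of existence via the direct method and of uniqueness/Monge structure via Brenier's theorem is likewise the classical argument. There is nothing to correct; if anything, for consistency with the paper's expository style you could simply cite the same sources and omit the detailed sketch.
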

Equivalence between dynamic and static formulation for $\W_2$ is a classical result. Proofs can be found, for instance, in \cite[Theorem 8.1]{Villani-TOT2003} and \cite[Theorem 5.28]{Santambrogio-OTAM}. Existence, uniqueness and Monge-map structure of minimizers are also treated in \cite{Villani-TOT2003,Santambrogio-OTAM}, see also \cite{MonotoneRerrangement-91}.
\begin{remark}
On non-compact domains one typically adds the assumption that $\mu_0$ and $\mu_1$ have finite second moments in Proposition~\ref{prop:WStatic}. This is trivially satisfied in our setting since we assume $\Omega$ to be compact.
\end{remark}
\vspace{\baselineskip}

\subsection{Geodesics from optimal couplings} \label{sec:W2Geo}
Constant speed geodesics for $\W_2(\mu_0,\mu_1)$ (i.e.~minimizers of \eqref{eq:WBB}) can be constructed from optimal couplings $\pi$ in \eqref{eq:WStatic} by superposition of Dirac-to-Dirac geodesics.
The intuition behind this is as follows: when $\pi(x_0,x_1)>0$ for a pair $(x_0,x_1) \in \Omega^2$, the corresponding amount of mass travels along a Dirac-to-Dirac geodesic from $x_0$ to $x_1$. The geodesic between $\mu_0$ and $\mu_1$ is the superposition of all these Dirac-to-Dirac geodesics.
A precise formula is given in the following proposition.

\begin{proposition}[General geodesics for $\W_2$]
	\label{prop:WGeodesics}
	Let $\mu_0, \mu_1 \in \prob(\Omega)$ and let $\pi \in \measp(\Omega \times \Omega)$ be a corresponding minimizer of \eqref{eq:WStatic}.
	Define $X$ by~\eqref{eq:WDiracGeodesicX}.
	Then a constant speed geodesic between $\mu_0$ and $\mu_1$ is given by
    \begin{subequations}	
	\label{eq:WGeodesics}
	\begin{align}
		\rho_t & \assign
			\int_{\Omega^2} \delta_{X(x_0,x_1;t)}\,\diff \pi(x_0,x_1)
		 = X(\cdot,\cdot;t)_\sharp \pi \\
	\intertext{where $X(\cdot,\cdot;t)_\sharp \pi$ denotes the push-forward of $\pi$ under $X$ with the $t$-argument fixed, and}
		\omega_t & \assign
			\int_{\Omega^2} \left[\delta_{X(x_0,x_1;t)}
				\cdot \partial_t X(x_0,x_1;t)\right]\,\diff \pi(x_0,x_1)
		 = X(\cdot,\cdot;t)_\sharp \left(\partial_t X(\cdot,\cdot;t) \cdot \pi\right).
	\end{align}
	\end{subequations}
\end{proposition}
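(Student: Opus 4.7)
The plan is to verify the two defining properties of a constant speed geodesic: first, that the pair $(\rho,\omega)$ given by \eqref{eq:WGeodesics} lies in $\CE(\mu_0,\mu_1)$, and second, that its Benamou--Brenier action $J_{\W}(\rho,\omega)$ equals $\W_2(\mu_0,\mu_1)^2$. By Definition~\ref{def:WBB} these two facts together identify $(\rho,\omega)$ as a minimizer of \eqref{eq:WBB}.

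For the first step, I would fix a test function $\phi \in C^1([0,1]\times\Omega)$, plug the Pettis-integral expressions for $\rho$ and $\omega$ into the distributional identity of Definition~\ref{def:WContEq}, and use Fubini to exchange the $t$-integration with the $\pi$-integration. The point is that for each fixed pair $(x_0,x_1)$ the two contributions $\partial_t\phi$ and $\nabla\phi\cdot\partial_t X$ combine into $\tfrac{d}{dt}\phi(t,X(x_0,x_1;t))$, so that the time integral yields $\phi(1,x_1)-\phi(0,x_0)$. Integrating this against $\pi$ and using the marginal conditions $\proj_{i\sharp}\pi=\mu_i$ reproduces the right-hand side of the continuity equation: this is simply a $\pi$-weighted superposition of the Dirac-to-Dirac case of Proposition~\ref{prop:WDirac}.

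For the second step, the key observation is that $J_{\W}$ is convex and positively one-homogeneous, since its integrand $(a,b)\mapsto \|b\|^2/a$ has these properties on $\R_+\times\R^d$. This yields a Jensen-type superposition bound $J_{\W}(\rho,\omega)\leq \int_{\Omega^2} J_{\W}(\rho^{x_0,x_1},\omega^{x_0,x_1})\,d\pi(x_0,x_1)$, where $(\rho^{x_0,x_1},\omega^{x_0,x_1})$ denotes the Dirac-to-Dirac geodesic of Proposition~\ref{prop:WDirac}. That proposition evaluates each integrand on the right to $\|x_0-x_1\|^2$, and optimality of $\pi$ in \eqref{eq:WStatic} then gives $J_{\W}(\rho,\omega)\leq \W_2(\mu_0,\mu_1)^2$. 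The reverse inequality is immediate from Definition~\ref{def:WBB} since $(\rho,\omega)\in\CE(\mu_0,\mu_1)$, so equality holds and $(\rho,\omega)$ is a constant speed geodesic.

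The main technical obstacle is the rigorous justification of the superposition inequality against a Pettis integral. The cleanest route is the dual representation $J_{\W}(\rho,\omega) = \sup\bigl\{\int \alpha\,d\rho + \int \beta\cdot d\omega \bigm| (\alpha,\beta)\in \cont([0,1]\times\Omega)^{1+d},\ \alpha+\tfrac{1}{2}\|\beta\|^2\leq 0\bigr\}$, after which the inequality reduces to the trivial observation that each fixed linear functional $(\rho,\omega)\mapsto \int\alpha\,d\rho+\int\beta\cdot d\omega$ commutes with Pettis integration by definition, so the supremum can be pulled outside the $\pi$-integral.
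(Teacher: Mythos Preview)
Your argument is correct. The paper itself does not prove this proposition but simply defers to \cite[Theorem 5.27]{Santambrogio-OTAM}, remarking only that \eqref{eq:WGeodesics} is a superposition of the Dirac-to-Dirac formulas \eqref{eq:WDiracGeodesic}. Your direct approach is, however, exactly the strategy the paper writes out in full for the Hellinger--Kantorovich analogue, Proposition~\ref{prop:HKGeodesicsSoftMarginal}: first verify the continuity equation by recognizing the $t$-integrand as a total derivative $\tfrac{d}{dt}\phi(t,X(x_0,x_1;t))$ and using the marginal conditions of $\pi$, then bound the action via sub-additivity (convexity plus positive $1$-homogeneity) of the Benamou--Brenier integrand. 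The only minor technical difference is in how the superposition inequality is justified: you use the dual representation of $J_{\W}$ as a supremum of linear functionals, whereas the paper's $\HK$ proof invokes Lemma~\ref{lem:JensenPushfwd} (disintegration and Jensen's inequality for the push-forward under $X(\cdot,\cdot;t)$). Both routes are standard and yield the same bound.
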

For details see \cite[Theorem 5.27]{Santambrogio-OTAM}.
We emphasize that formulas \eqref{eq:WGeodesics} are a superposition of formulas \eqref{eq:WDiracGeodesic}.
\vspace{\baselineskip}

\subsection[Riemannian structure of W2]{Riemannian structure of $\W_2$}
\label{sec:W2Riemann}
We briefly review some aspects of the Riemannian structure of $\W_2$.
A more complete picture can be found, for instance, in \cite[Sections 2.3.2 and 7.2]{AmbrosioGigli2013}.
At the formal level, \eqref{eq:WBB} looks like a functional to find constant speed geodesics on a Riemannian manifold, where the manifold is $\prob(\Omega)$, the curve is given by $t \mapsto \rho_t$, the tangent vectors are encoded by the velocity field $v_t \assign \RadNik{\omega_t}{\rho_t}$, which must lie in $\Lp{2}(\Omega,\rho_t)$ $t$-almost everywhere when $J_\W$ is finite, and the Riemannian inner product between tangent vectors $v$ and $w$ at $\rho_t$ is given by
\begin{align}
	g_{\W_2}(\rho_t;v,w) \assign \int_\Omega \la v,w\ra \,\diff \rho_t.
\end{align}
The relation between tangent vectors $v_t$ and the curve $\rho_t$ is encoded in the continuity equation, see Definition~\ref{def:WContEq}.

Let $\mu_0 \in \probL(\Omega)$, i.e.~$\mu_0 \ll \Lebesgue$,  $\mu_1 \in \prob(\Omega)$, $\pi \in \Pi(\mu_0,\mu_1)$ be a minimizing coupling for $\W_2(\mu_0,\mu_1)^2$ in \eqref{eq:WStatic}, and $(\rho, \omega)$ be corresponding minimizers of \eqref{eq:WBB} that were constructed via Proposition \ref{prop:WGeodesics}. Then one finds
\begin{align}
	\label{eq:W2TangentFormPre}
	\W_2(\mu_0,\mu_1)^2 & = \int_0^1 \int_\Omega \|v_t\|^2\,\diff \rho_t\,\diff t
	\qquad \tn{with} \qquad v_t \assign \RadNik{\omega_t}{\rho_t}.
\end{align}
	By assumption on $\mu_0$ the optimal $\pi$ is unique and induced by a Monge map $T : \Omega \to \Omega$, i.e.~$\pi=(\id,T)_\sharp \mu_0$ (see Proposition \ref{prop:WStatic}) and in this particular case one obtains:
\begin{align*}
	\rho_t & = X(\cdot,\cdot;t)_\sharp (\id,T)_\sharp \mu_0 = \big((1-t) \cdot \id + t \cdot T\big)_\sharp \mu_0, \\
	\omega_t & = X(\cdot,\cdot;t)_\sharp\left( \partial_t X(\cdot,\cdot;t) \cdot (\id,T)_\sharp \mu_0 \right) = \big((1-t) \cdot \id + t \cdot T\big)_\sharp \big( (T-\id) \cdot \mu_0 \big)
\end{align*}
and thus $v_t\big((1-t)\,x_0 + t\,T(x_0)\big) = T(x_0)-x_0$
and in particular
\begin{align}
	\label{eq:W2Logarithmic}
	v_0(x_0) & = T(x_0)-x_0.
\end{align}
Consequently, for all $t \in [0,1]$
\begin{align*}
	\int_\Omega \left\|\RadNik{\omega_t}{\rho_t}\right\|^2\,\diff \rho_t
	& = \int_\Omega \left\|v_t\right\|^2\,\diff \big((1-t) \cdot \id + t \cdot T\big)_\sharp \mu_0 \\
	& = \int_\Omega \left\|v_t \circ \big((1-t) \cdot \id + t \cdot T\big) \right\|^2\,\diff \mu_0 \\
	& = \int_\Omega \left\|v_0 \right\|^2\,\diff \mu_0
\end{align*}
and finally
\begin{align}
	\label{eq:W2TangentForm}
	\W_2(\mu_0,\mu_1)^2 & = \int_\Omega \left\|v_0\right\|^2\,\diff \mu_0 = g_{\W_2}(\mu_0;v_0,v_0).
\end{align}
We interpret the map $T \mapsto v_0$ implied by \eqref{eq:W2Logarithmic} such that it takes $\mu_1$ to the tangent vector at $t=0$ of the constant-speed geodesic from $\mu_0$ to $\mu_1$. Thus, it is formally the logarithmic map at $\mu_0$ and we will denote it in the following as $\Log_{\W_2}(\mu_0;\cdot)$. With this, \eqref{eq:W2TangentForm} becomes
\begin{equation} \label{eq:W2Euc}
	\W_2(\mu_0,\mu_1)^2 = g_{\W_2}\big(\mu_0;\Log_{\W_2}(\mu_0;\mu_1),\Log_{\W_2}(\mu_0;\mu_1)\big).
\end{equation}
This is analogous to the classical result in (finite-dimensional) Riemannian geometry.
As is well known, the corresponding exponential map is given by $\Exp_{\W_2}(\mu_0,v_0) \assign (\id + v_0)_\sharp \mu_0$ and one finds $\Exp_{\W_2}(\mu_0,v_0)=T_\sharp \mu_0=\mu_1$, as expected.


\subsection{Linearized Wasserstein-2 Distance}
\label{sec:W2Lin}

We can use the formulation~\eqref{eq:W2Euc} to linearize $\W_2$ around the support point $\mu_0 \in \probL(\Omega)$, as was proposed in \cite{OptimalTransportTangent2012} for applications in the geometric analysis of ensembles of images. For $\mu_1, \mu_2 \in \prob(\Omega)$ we set
\begin{equation} \label{eq:WLinMonge}
\begin{split}
	\WLin(\mu_0;\mu_1,\mu_2)^2 & \assign g_{\W_2}\big(\mu;\Log_{\W_2}(\mu_0;\mu_1)-\Log_{\W_2}(\mu_0;\mu_2),\Log_{\W_2}(\mu_0;\mu_1)-\Log_{\W_2}(\mu_0;\mu_2) \big) \\
	&	= \int_\Omega \left\| \Log_{\W_2}(\mu_0;\mu_1)-\Log_{\W_2}(\mu_0;\mu_2) \right\|^2 \,\diff \mu_0. 
\end{split}
\end{equation}
We call $\WLin$ the linearized Wasserstein-2 distance (we note that in~\cite{OptimalTransportTangent2012} this was called the linearized optimal transport distance, however optimal transport is a more general term that includes both Wasserstein distances and Hellinger--Kantorovich distances so to avoid confusion we change the terminology from~\cite{OptimalTransportTangent2012}).
We notice that $\WLin(\mu_0;\mu_1,\mu_2)$ can be written as the $\Lp{2}(\mu_0)$ distance on $\Omega$ between $\Log_{\W_2}(\mu_0;\mu_1)$ and $\Log_{\W_2}(\mu_0;\mu_2)$.
This identification is useful as it now allows one to use off-the-shelf tools from Euclidean geometry such as principal component analysis.

When the Monge map $T$, used in the definition of the logarithmic map, does not exist one instead uses the shortest generalized geodesic (see also~\cite[Definition 9.2.2]{AmbrosioGradientFlows2005}).
That is we let
\begin{equation} \label{eq:GenGeo}
\begin{split}
\gamma\in \prob(\Omega^3) \quad \text{with} \quad & P_{i\sharp} \gamma = \mu_i, P_{01\sharp} \gamma \in \PiOpt(\mu_0,\mu_1) \text{ and } P_{02\sharp} \gamma \in \PiOpt(\mu_0,\mu_2)
\end{split}
\end{equation}
(such a $\gamma$ exists by~\cite[Lemma 5.3.2]{AmbrosioGradientFlows2005}) and define the linearized optimal transport distance by
\begin{equation} \label{eq:WLinGen}
\WLin(\mu_0;\mu_1,\mu_2)^2 \assign \min_{\gamma \text{ satisfying~\eqref{eq:GenGeo}}} \int_{\Omega^3} \|x_1-x_2\|^2 \, \diff \gamma(x_0,x_1,x_2).
\end{equation} 
In the case the Monge map exists then $\PiOpt(\mu_0,\mu_1)$ and $\PiOpt(\mu_0,\mu_2)$ contain a single transport plan each which can be written as $\pi_{01} = (\id,T_{01})_{\sharp}\mu_0\in \PiOpt(\mu_0,\mu_1)$, $\pi_{02} = (\id,T_{02})_{\sharp}\mu_0\in \PiOpt(\mu_0,\mu_2)$, and furthermore the $\gamma$ that satisfies~\eqref{eq:GenGeo} is unique and given by $\gamma=(\id,T_{01},T_{02})_{\sharp}\mu_0$.
In this case~\eqref{eq:WLinMonge} and~\eqref{eq:WLinGen} coincide.
If the Monge map does not exist then the minimisation in~\eqref{eq:WLinGen} is no longer necessarily over a singleton, even though optimal plans $\pi_{01}$ and $\pi_{02}$ might be unique.

To remove the minimisation in~\eqref{eq:WLinGen} in the general case (without Monge maps), following~\cite{OptimalTransportTangent2012}, one approximates optimal plans $\pi_{01}\in \PiOpt(\mu_0,\mu_1)$, $\pi_{02}\in\PiOpt(\mu_0,\mu_2)$ (when the optimal plans are not unique then one must choose amongst the set of optimal plans -- in practice this is determined by the algorithm used to solve the Kantorovich optimisation problem~\eqref{eq:WStatic}) by a plan induced by a map through barycentric projection, namely
\begin{align*}
\pi_{01} & \approx (\id,T_{01})_{\sharp}\mu_0, & T_{01}(x_0) \assign \int_\Omega x_1 \, \diff \pi_{01,x_0}(x_1), \\
\pi_{02} & \approx (\id,T_{02})_{\sharp}\mu_0, & T_{02}(x_0) \assign \int_\Omega x_2 \, \diff \pi_{02,x_0}(x_2)
\end{align*}
where $\{\pi_{01,x_0}\}_{x_0\in\Omega}\subset \prob(\Omega)$, $\{\pi_{02,x_0}\}_{x_0\in\Omega}\subset \prob(\Omega)$ are the disintegrations of $\pi_{01}$, $\pi_{02}$ with respect to $\mu_0$, i.e.
\begin{align*}
\int_{\Omega^2} \phi(x_0,x_1) \, \diff \pi_{01}(x_0,x_1) & = \int_\Omega \l \int_\Omega \phi(x_0,x_1) \, \diff \pi_{01,x_0}(x_1) \r \, \diff \mu_0(x_0), \\
\int_{\Omega^2} \phi(x_0,x_2) \, \diff \pi_{02}(x_0,x_2) & = \int_\Omega \l \int_\Omega \phi(x_0,x_2) \, \diff \pi_{02,x_0}(x_2) \r \, \diff \mu_0(x_0)
\end{align*}
for any measurable function $\phi:\Omega^2\to [0,\infty]$ (see~\cite[Theorem 5.3.1]{AmbrosioGradientFlows2005}).
The approximate Monge maps $T_{01}$ and $T_{02}$ are then used in \eqref{eq:WLinMonge}.
\vspace{\baselineskip}

\section{Hellinger--Kantorovich distance} \label{sec:HK}
In this section we review some properties of the Hellinger--Kantorovich distance $\HK$ on $\measp(\Omega)$. The presentation is structured analogously to that of $\W_2$.

\subsection{Benamou--Brenier-type formulation}
\label{sec:HKBB}
We first introduce $\HK$ as a generalization of the Benamou--Brenier formula (Definition \ref{def:WBB}) where now an action is minimized over solutions to the continuity equation with source.
\begin{definition}[Continuity equation with source]
	For $\mu_0,\mu_1 \in \meas(\Omega)$ we denote by $\CES(\mu_0,\mu_1)$ the set of solutions for the continuity equation with source on $[0,1] \times \Omega$, i.e.~the set of triplets of measures $(\rho,\omega,\zeta) \in \meas([0,1] \times \Omega)^{1+d+1}$ where $\rho$ interpolates between $\mu_0$ and $\mu_1$ and that solve
	\begin{align*}
		\partial_t \rho + \ddiv \omega = \zeta
	\end{align*}
	in a distributional sense. More precisely, we require for all $\phi \in \cont^1([0,1] \times \Omega)$ that
	\begin{align}
		\int_{[0,1] \times \Omega} \partial_t \phi\,\diff \rho
		+ \int_{[0,1] \times \Omega} \nabla \phi \cdot \diff \omega
		+ \int_{[0,1] \times \Omega} \phi\,\diff \zeta
		=
		\int_\Omega \phi(1,\cdot)\,\diff \mu_1 - \int_\Omega \phi(0,\cdot)\,\diff \mu_0.
	\end{align}
\end{definition}

\begin{definition}[Hellinger--Kantorovich distance, dynamic formulation \cite{KMV-OTFisherRao-2015,ChizatOTFR2015,LieroMielkeSavare-HellingerKantorovich-2015a}]
\label{def:HKBB}
Let $J_{\HK} : \allowbreak \meas([0,1] \allowbreak \times \Omega)^{1+d+1} \to \RCupInf$ be given by
\begin{subequations}
\label{eq:HKBB}
\begin{align}
	\label{eq:HKBBJ}
	J_{\HK}(\rho,\omega,\zeta) & \assign \begin{cases}
			\int_{[0,1] \times \Omega} \left(
				\lda\RadNik{\omega}{\rho}\rda^2 + \tfrac{1}{4} \l\RadNik{\zeta}{\rho}\r^2 \right) \diff \rho
				& \tn{if } \rho \geq 0, \omega, \zeta \ll \rho, \\
				+ \infty & \tn{else.}
		\end{cases}
	\intertext{Then for $\mu_0,\mu_1 \in \measp(\Omega)$ we set}
	\label{eq:HKBBMin}
	\HK(\mu_0,\mu_1)^2 & \assign \inf \left\{
		J_{\HK}(\rho,\omega,\zeta)
		\middle| (\rho,\omega,\zeta) \in \CES(\mu_0,\mu_1)
		\right\}.
\end{align}
\end{subequations}
\end{definition}

\begin{remark}[Intrinsic length scale of $\HK$]
\label{rem:HKLengthScale}
The factor of $\frac14$ in $J_{\HK}$ is chosen for algebraic convenience.
In more generality one could define 
\[
	J_{\HK,\kappa}(\rho,\omega,\zeta) \assign \begin{cases}
		\int_{[0,1] \times \Omega} \l \lda\RadNik{\omega}{\rho}\rda^2
		+ \tfrac{\kappa^2}{4} \l\RadNik{\zeta}{\rho}\r^2 \r \diff \rho &
		 \text{if } \rho \geq 0, \omega, \zeta \ll \rho, \\
		+ \infty & \text{else.}
	\end{cases}
\]
The parameter $\kappa>0$ controls the relative importance of the transport part of the cost, $\int_{[0,1] \times \Omega} \|\RadNik{\omega}{\rho}\|^2\diff \rho$, and the destruction/creation part, $\int_{[0,1] \times \Omega} (\RadNik{\zeta}{\rho})^2\diff \rho$.
In particular, if we let $\HK_{\kappa}(\mu_0,\mu_1)$ be as in~\eqref{eq:HKBBMin} but with the cost function $J_{\HK,\kappa}$ then $\HK(\mu_0,\mu_1) = \HK_{1}(\mu_0,\mu_1)$, $\HK_\kappa(\mu_0,\mu_1)\to \W_2(\mu_0,\mu_1)$ as $\kappa\to\infty$, and $\HK_\kappa(\mu_0,\mu_1)/\kappa$ converges to the Hellinger--Kakutani distance between $\mu_0$ and $\mu_1$ as $\kappa\to0$~\cite[Theorems 7.22 and 7.24]{LieroMielkeSavare-HellingerKantorovich-2015a}.
Setting the parameter $\kappa$ is equivalent to re-scaling the set $\Omega$ to $\Omega/\kappa$ (and all measures accordingly), then computing $\HK_1$ on $\Omega/\kappa$ and finally multiplying the result by $\kappa$ again.
Transport in $\HK_1$ is bounded by $\frac{\pi}{2}$, in the sense that mass at location $x_0$ will never be transported outside of the ball $B(x_0,\pi/2)$ (see the discussion after Remark~\ref{rem:DiracConeIsometry}) and therefore the transport in $\HK_\kappa$ is bounded by $\frac{\kappa\pi}{2}$.
This observation is useful in applications as it allows one to prescribe how far mass should be transported and provides a good intuition for the choice of $\kappa$.
Our experiments in Section~\ref{sec:Numerics} show that classification performance with respect to $\kappa$ is relatively robust around the optimal value. With intuition and robustness, a coarse cross validation search for an acceptable value is therefore sufficient.
\end{remark}

\begin{proposition}
\label{prop:HKBasic}
$\HK$ is a metric on $\measp(\Omega)$.
Minimizers of \eqref{eq:HKBBMin} exist and will be referred to as \emph{constant speed geodesics} between $\mu_0$ and $\mu_1$ with respect to $\HK$. More concretely, minimal $(\rho,\omega,\zeta)$ in \eqref{eq:HKBBMin} satisfy $\HK(\rho_s,\rho_t)=|s-t|\cdot \HK(\mu_0,\mu_1)$ for Lebesgue-almost all $s,t \in [0,1]$ and
\begin{align*}
	\int_{\Omega} \left(
				\|\RadNik{\omega_t}{\rho_t}\|^2 + \tfrac{1}{4} (\RadNik{\zeta_t}{\rho_t})^2 \right) \diff \rho_t
	= \HK(\mu_0,\mu_1)^2
\end{align*}
for Lebesgue-almost all $t \in (0,1)$.
\end{proposition}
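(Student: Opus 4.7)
The statement is essentially a compendium of standard facts about $\HK$ proved in \cite{LieroMielkeSavare-HellingerKantorovich-2015a,ChizatOTFR2015,KMV-OTFisherRao-2015}; my plan is to outline the arguments rather than rederive them in detail. For the metric properties, non-negativity of $\HK$ is immediate from $J_{\HK}\geq 0$, and symmetry is established by the time-reversal map $(\rho_t,\omega_t,\zeta_t)\mapsto(\rho_{1-t},-\omega_{1-t},-\zeta_{1-t})$, which sends $\CES(\mu_0,\mu_1)$ to $\CES(\mu_1,\mu_0)$ and preserves $J_{\HK}$ since the integrand depends only on the squared Radon--Nikodym densities. For the triangle inequality, I would concatenate admissible triples for $\HK(\mu_0,\mu_1)$ and $\HK(\mu_1,\mu_2)$ after reparametrising the first to live on $[0,s]$ and the second on $[s,1]$; the quadratic cost scales as $1/s$ and $1/(1-s)$, and optimising over $s\in(0,1)$ yields the usual $(\HK(\mu_0,\mu_1)+\HK(\mu_1,\mu_2))^2$. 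Positive definiteness (that $\HK(\mu_0,\mu_1)=0$ forces $\mu_0=\mu_1$) is the one point most naturally proved via the equivalent Kantorovich-type formulation of Section~\ref{sec:HKKantorovich}, so I would simply invoke that equivalence.

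For existence of minimisers I would apply the direct method. The integrand $(a,b)\mapsto\|a\|^2+\tfrac14 b^2$ is convex, so via the standard perspective construction $J_{\HK}$ is jointly convex and weak-$*$ lower semicontinuous in $(\rho,\omega,\zeta)$. For a minimising sequence $(\rho^n,\omega^n,\zeta^n)\in\CES(\mu_0,\mu_1)$, I would control $\rho^n([0,1]\times\Omega)$, together with the total variations of $\omega^n$ and $\zeta^n$, by the cost together with $\|\mu_0\|+\|\mu_1\|$ (Cauchy--Schwarz against the continuity equation with source, applied to $\phi\equiv 1$ and to affine test functions). A subsequence then converges weak-$*$ to an admissible limit, and lower semicontinuity gives a minimiser.

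For the constant-speed property, I would set
\begin{equation*}
F(t)\assign\int_\Omega\left(\left\|\RadNik{\omega_t}{\rho_t}\right\|^2+\tfrac14\left(\RadNik{\zeta_t}{\rho_t}\right)^2\right)\diff\rho_t
\end{equation*}
so that $J_{\HK}(\rho,\omega,\zeta)=\int_0^1 F(t)\,dt$. For $0\le s<t\le 1$, restricting a minimiser to $[s,t]\times\Omega$ and rescaling the time variable linearly onto $[0,1]$ produces an admissible triple in $\CES(\rho_s,\rho_t)$ whose cost equals $(t-s)\int_s^t F(\tau)\,d\tau$, whence $\HK(\rho_s,\rho_t)\le\sqrt{(t-s)\int_s^t F}$. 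Partitioning $[0,1]$ finely, summing with the triangle inequality, and combining with the Cauchy--Schwarz bound $\bigl(\int_0^1\sqrt{F}\bigr)^2\le\int_0^1 F$ shows $\HK(\mu_0,\mu_1)^2\le\int_0^1 F=J_{\HK}(\rho,\omega,\zeta)=\HK(\mu_0,\mu_1)^2$, so all inequalities are equalities; the Cauchy--Schwarz equality case then forces $F$ to be a.e.~constant and equal to $\HK(\mu_0,\mu_1)^2$, and $\HK(\rho_s,\rho_t)=|t-s|\HK(\mu_0,\mu_1)$ follows.

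The main technical obstacle is the rescaling step, since it requires a measurable disintegration $(\rho_t,\omega_t,\zeta_t)$ of a finite-energy solution in $\CES(\mu_0,\mu_1)$ and the verification that the rescaled triple is still an admissible solution of the continuity equation with source; this is handled cleanly in \cite{LieroMielkeSavare-HellingerKantorovich-2015a} and I would cite it rather than reprove it.
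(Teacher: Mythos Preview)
Your proposal is correct and sketches the standard proof strategy. The paper itself takes a much shorter route: it simply cites \cite[Theorems 2.1 and 2.2]{ChizatOTFR2015} (which in turn rely on \cite[Theorem 5.4]{DNSTransportDistances09}) and does not reproduce any of the arguments. What you have written is essentially an outline of the content behind those citations: time-reversal for symmetry, concatenation/rescaling for the triangle inequality, the direct method for existence, and the reparametrisation plus Cauchy--Schwarz argument for the constant-speed property. One minor point: in your constant-speed paragraph the relevant Cauchy--Schwarz step is the discrete one, $\sum_i \sqrt{(t_{i+1}-t_i)}\sqrt{\int_{t_i}^{t_{i+1}}F}\le\sqrt{\sum_i(t_{i+1}-t_i)}\sqrt{\int_0^1 F}$, rather than the continuous inequality $(\int\sqrt{F})^2\le\int F$ you wrote; the latter appears if one instead passes through the metric derivative. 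Either route closes the argument, so this is a matter of phrasing rather than a gap.
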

\begin{proof}
	These results are established, for instance, in \cite[Theorems 2.1 and 2.2]{ChizatOTFR2015}
	which rely on \cite[Theorem 5.4]{DNSTransportDistances09}.
\end{proof}

We now find constant speed geodesics between Diracs.

\begin{proposition}[$\HK$ distance between Dirac measures \cite{ChizatOTFR2015,LieroMielkeSavare-HellingerKantorovich-2015a}]
For $x_0, x_1 \in \Omega$, $m_0, m_1 \in \R_+$ one finds
\begin{align}
	\label{eq:HKDirac}
	\HK(\delta_{x_0} \cdot m_0,\delta_{x_1} \cdot m_1)^2 & =
	m_0 + m_1 - 2\sqrt{m_0\,m_1} \Cos(\|x_0-x_1\|)
\end{align}
where $\Cos(s)=\cos\big(\min\{s, \tfrac{\pi}{2}\}\big)$.

\end{proposition}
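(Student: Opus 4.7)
My plan is to prove the claimed identity by establishing matching upper and lower bounds for the infimum defining $\HK(m_0 \delta_{x_0}, m_1 \delta_{x_1})^2$. The upper bound comes from exhibiting two explicit admissible triples in $\CES$ and comparing their $J_{\HK}$-actions; the lower bound is obtained by appealing to the metric-cone representation of $\HK$ developed in~\cite{LieroMielkeSavare-HellingerKantorovich-2015a,ChizatOTFR2015}.

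For the upper bound I would consider two competing constructions. The first is a \emph{single travelling Dirac},
\[
\rho_t = m(t)\,\delta_{x(t)}, \qquad \omega_t = m(t)\,\dot x(t)\,\delta_{x(t)}, \qquad \zeta_t = \dot m(t)\,\delta_{x(t)},
\]
with $m(0)=m_0$, $m(1)=m_1$, $x(0)=x_0$, $x(1)=x_1$; admissibility in $\CES$ is verified by the chain rule applied to $m(t)\phi(t,x(t))$ and integration over $[0,1]$. The resulting action equals $\int_0^1 \bigl(m|\dot x|^2 + \dot m^2/(4m)\bigr)\,dt$, and the substitution $r(t) \assign \sqrt{m(t)}$ rewrites it as $\int_0^1 \bigl(r^2|\dot x|^2 + \dot r^2\bigr)\,dt$, which is the Riemannian kinetic energy on the cone $[0,\infty)\times\Omega$ with metric $dr^2 + r^2 |dx|^2$. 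Restricting $x(t)$ to the segment from $x_0$ to $x_1$ with arclength parameter $s(t)\in[0,a]$, $a\assign\|x_0-x_1\|$, and setting $z(t) \assign r(t)\,e^{is(t)}\in\C$, the action takes the form $\int_0^1 |\dot z|^2\,dt$, which by standard kinetic-energy lower bounds is at least $|z(1)-z(0)|^2 = m_0+m_1 - 2\sqrt{m_0 m_1}\cos a$, with equality for affine $z$ (which keeps $r(t)>0$ whenever $a<\pi$). The second construction is an \emph{annihilate--create} ansatz using two stationary Diracs,
\[
\rho_t = (1-t)^2 m_0\,\delta_{x_0} + t^2 m_1\,\delta_{x_1}, \qquad \omega_t = 0, \qquad \zeta_t = -2(1-t) m_0\,\delta_{x_0} + 2t\,m_1\,\delta_{x_1},
\]
for which a direct term-by-term computation yields $J_{\HK}=m_0+m_1$. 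Comparing, the travelling-Dirac cost wins precisely when $\cos a>0$, i.e.~$a<\pi/2$, and the minimum of the two equals $m_0+m_1 - 2\sqrt{m_0 m_1}\,\Cos(\|x_0-x_1\|)$, giving the upper bound.

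For the lower bound I would invoke the cone reformulation of $\HK$: $\HK(\mu_0,\mu_1)^2$ coincides with (the square of) a Wasserstein-$2$-type cost on the metric cone over $\Omega$, and for Dirac endpoints the lifted problem is solved by two single cone Diracs whose squared cone distance is precisely the right-hand side of~\eqref{eq:HKDirac}. The truncation in $\Cos$ at $\pi/2$ is exactly the crossover past which the cheapest cone-transport routes mass through the apex, corresponding to the annihilate/create path above. The main obstacle, were one to aim for a self-contained derivation avoiding both Section~\ref{sec:HKKantorovich} and external references, would be to rule out more exotic admissible triples (e.g.~with diffuse $\rho_t$ or infinitely many Diracs) beating the two model ansätze: this could be approached by (i) projecting onto the segment $[x_0,x_1]$ to reduce to a one-dimensional problem, (ii) disintegrating $\rho_t$ along characteristics into travelling and stationary pieces, and (iii) optimising the resulting mass split in closed form; the delicate book-keeping occurs at points where mass appears from or vanishes into nothing.
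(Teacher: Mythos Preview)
Your proposal is correct and, at its core, takes the same route as the paper: both rely on the cone representation from \cite{LieroMielkeSavare-HellingerKantorovich-2015a,ChizatOTFR2015} for the crucial optimality (lower bound) step. The paper's proof is terser---it simply cites the geodesic from \cite[Theorem~4.1]{ChizatOTFR2015} and plugs it into $J_{\HK}$, then refers to \cite{LieroMielkeSavare-HellingerKantorovich-2015a} for the closed-form value---whereas you spell out the upper-bound constructions explicitly (the travelling-Dirac-via-cone-substitution and the annihilate--create ansatz), which is pedagogically nicer and makes the $\pi/2$ crossover transparent, but the substance is the same.
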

\begin{proof}
	This result follows quickly from \cite[Theorem 4.1]{ChizatOTFR2015} by plugging the Dirac-to-Dirac geodesic into $J_{\HK}$.
	It is also given in \cite{LieroMielkeSavare-HellingerKantorovich-2015a} in Equation (6.31) or is quickly derived from Theorem 6.7 and (6.34).
\end{proof}

\begin{remark}[Local isometry and geodesics between Dirac measures]
\label{rem:DiracConeIsometry}
For $\|x_0-x_1\|\leq \tfrac{\pi}{2}$ the $\HK$ distance between two Dirac measures equals the distance between two points in $\C$ in polar coordinates where the radii are given by $\sqrt{m_i}$ and the angle between the two vectors is given by $\|x_0-x_1\|$. For instance, for $\|x_0-x_1\|\leq \tfrac{\pi}{2}$ one can write:
\begin{align*}
	\HK(\delta_{x_0} \cdot m_0,\delta_{x_1} \cdot m_1) & =
	\|\sqrt{m_0} - \sqrt{m_1} \exp(i\|x_0-x_1\|)\|
\end{align*}
This is visualized in Figure \ref{fig:DiracConeIsometry}.
\end{remark}

From this local isometry one can deduce the structure of geodesics between Dirac measures. If $\|x_0-x_1\|<\tfrac{\pi}{2}$ the geodesic is given by a single `travelling' Dirac of the form
\begin{align*}
	\rho_t = \delta_{X(t)}\cdot M(t)
\end{align*}
where $X : [0,1] \to \Omega$ describes the movement from $x_0$ to $x_1$ and $M : [0,1] \to \R_+$ the evolution of the mass.
For $\W_2$ one would of course have that $X(t)$ parametrizes the constant speed straight line from $x_0$ to $x_1$ and $M(t)$ would be fixed to 1.
From Remark \ref{rem:DiracConeIsometry} and Figure \ref{fig:DiracConeIsometry} we see that in the $\HK$ metric $X$ and $M$ essentially describe the straight line between the two embedded points $\sqrt{m_0}$ and $\sqrt{m_1} \exp(i\|x_0-x_1\|)$ in $\C$ in polar coordinates.
The angle is given by $X$ and the squared radius is given by $M$. Explicit formulas for $X$ and $M$ (as well as for $\omega$ and $\zeta$) are given in Proposition \ref{prop:HKDiracGeodesics}.

When $\|x_0-x_1\|> \tfrac{\pi}{2}$ the geodesic between the two measures is equal to the geodesic in the Hellinger--Kakutani distance: the mass at $x_0$ is decreased from $m_0$ to $0$, the mass at $x_1$ is increased from $0$ to $m_1$ and no transport occurs. The geodesic will be of the form
\begin{align*}
	\rho_t = \delta_{x_0} \cdot M_0(t) + \delta_{x_1} \cdot M_1(t)\,.
\end{align*}
The precise form of $M_0$ and $M_1$ is given in Proposition \ref{prop:HKDiracGeodesics}.

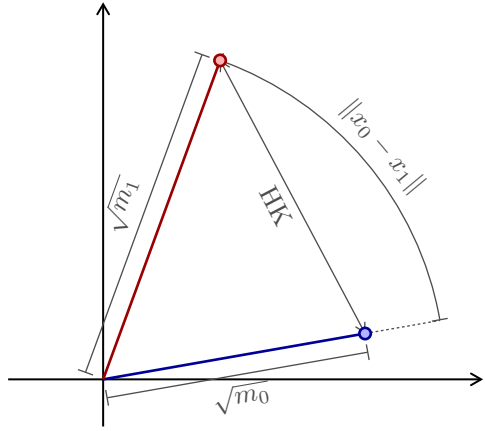
\begin{figure}[hbt]
\centering
	{
\def\radPt{0.03}
\def\lblOffset{0.1}
\def\angMu{10}
\def\angNu{70}
\def\radMu{1.4}
\def\radNu{1.8}
\def\angMuB{0}
\def\angNuB{100}

\begin{tikzpicture}[
				x=2.5cm,y=2.5cm,
				dashedFine/.style={dash pattern=on 1pt off 1pt},
				axis/.style={line width=.75pt, black},
				lbl/.style={line width=.5pt, black!70!white},
				mu/.style={line width=1.pt, blue!60!black, fill=blue!30!white},
				nu/.style={line width=1.pt, red!60!black, fill=red!30!white}
	]
	
	\draw[axis,->] (-0.5,0) -- (2,0);
	\draw[axis,->] (0,-0.25) -- (0,2);
	\coordinate (cMu) at (\angMu:\radMu);
	\coordinate (cNu) at (\angNu:\radNu);

	\draw[lbl,|-|] ($(0,0)+(\angMu-90:\lblOffset)$) -- ($(cMu)+(\angMu-90:\lblOffset)$) node [midway,below,sloped] {$\sqrt{m_0}$};
	\draw[lbl,|-|] ($(0,0)+(\angNu+90:\lblOffset)$) -- ($(cNu)+(\angNu+90:\lblOffset)$) node [midway,above,sloped] {$\sqrt{m_1}$};

	\draw[lbl,<->] ($(cMu)$) -- ($(cNu)$) node [midway,below,sloped] {$\HK$};
	\draw[lbl,|-|] (\angMu:\radNu) arc[start angle=\angMu, end angle=\angNu, radius=\radNu]
		node[midway,above,sloped] {$\|x_0-x_1\|$};

	\draw[lbl,dashedFine] (\angMu:\radMu) -- (\angMu:\radNu);

	\draw[mu] (0,0) -- (cMu);
	\draw[mu] (cMu) circle[radius=\radPt];
	\draw[nu] (0,0) -- (cNu);
	\draw[nu] (cNu) circle[radius=\radPt];

\end{tikzpicture}
}
\caption{Local isometry between Dirac measures $\delta_{x_i} \cdot m_i$, $i=0,1$, with respect to the $\HK$ metric and points in $\C$ with respect to the Euclidean distance. When $\|x_0-x_1\|\leq \tfrac{\pi}{2}$ the geodesic between the two measures is described by the corresponding straight line in $\C$. Angle and (squared) radius of this curve (in constant speed parametrization) are given by the functions $X$ and $M$, \eqref{eq:HKDiracGeodesic}.}
\label{fig:DiracConeIsometry}
\end{figure}

\begin{proposition}[$\HK$ geodesics between Dirac measures]
\label{prop:HKDiracGeodesics}
For $x_0, x_1 \in \Omega$, $\|x_0-x_1\|<\tfrac{\pi}{2}$ $m_0, m_1 \in \R_+$ let
\begin{subequations}
\label{eq:HKDiracGeodesic}
\begin{align}
	\varphi(x_0,m_0,x_1,m_1;t) & \assign \begin{cases}
		\cos^{-1}\l \frac{(1-t)\sqrt{m_0} + t\sqrt{m_1} \cos\|x_0-x_1\|}{\sqrt{M(x_0,m_0,x_1,m_1;t)}} \r
		& \tn{for } t \in (0,1), \\
		0 & \tn{for } t=0 \vee m_1=0, \\
		\|x_0-x_1\| & \tn{for } t=1 \vee m_0=0, \\
		\end{cases}
		\label{eq:HKDiracGeodesicPhi} \\
	X(x_0,m_0,x_1,m_1;t) & \assign x_0 + \frac{x_1-x_0}{\|x_0-x_1\|}
		\cdot \varphi(x_0,m_0,x_1,m_1;t)
		\label{eq:HKDiracGeodesicX} \\
	M(x_0,m_0,x_1,m_1;t) & \assign (1-t)^2 m_0 + t^2m_1 + 2t(1-t) \sqrt{m_0m_1}\cos\|x_0-x_1\|
		\label{eq:HKDiracGeodesicM} \\
		& = (1-t)m_0+tm_1 - t(1-t)\,\HK(m_0 \cdot \delta_{x_0},m_1 \cdot \delta_{x_1})^2\,.
		\label{eq:HKDiracGeodesicMAlt}
\end{align}
\end{subequations}
If $\|x_0-x_1\|<\tfrac{\pi}{2}$ a constant speed geodesic between $\delta_{x_0} \cdot m_0$ and $\delta_{x_1} \cdot m_1$ for the $\HK$ metric is given by (cf.~Remark \ref{rem:TimeNotation}, Proposition \ref{prop:WDirac})
\begin{subequations}
\begin{align}
	\rho_t & \assign \delta_{X(x_0,m_0,x_1,m_1;t)} \cdot M(x_0,m_0,x_1,m_1;t)\,, \\
	\omega_t & \assign \delta_{X(x_0,m_0,x_1,m_1;t)} \cdot M(x_0,m_0,x_1,m_1;t) \cdot \partial_t X(x_0,m_0,x_1,m_1;t)\,, \\
	\zeta_t & \assign \delta_{X(x_0,m_0,x_1,m_1;t)} \cdot \partial_t M(x_0,m_0,x_1,m_1;t)\,.
\end{align}
\end{subequations}
For $\|x_0-x_1\|> \tfrac{\pi}{2}$ the geodesic is given by `teleportation' of the form
\begin{subequations}
\begin{align}
	\rho_t & \assign \delta_{x_0} \cdot M(x_0,m_0,x_0,0;t)
		+ \delta_{x_1} \cdot M(x_1,0,x_1,m_1;t)\,, \\
	\omega_t & \assign 0\,, \\
	\zeta_t & \assign \delta_{x_0} \cdot \partial_t M(x_0,m_0,x_0,0;t)
		+ \delta_{x_1} \cdot \partial_t M(x_1,0,x_1,m_1;t)\,.
\end{align}
\end{subequations}
For $\|x_0-x_1\|=\tfrac{\pi}{2}$ any combination of the two with the proper boundary conditions is a geodesic.

In addition, when $\|x_0-x_1\|<\tfrac{\pi}{2}$, one has for
\begin{align*}
x(t) & \assign X(x_0,m_0,x_1,m_1;t), & m(t) & \assign M(x_0,m_0,x_1,m_1;t),
\end{align*}
that
\begin{align}
	\HK(\delta_{x_0} \cdot m_0, \delta_{x_1} \cdot m_1)^2
	& = \int_0^1 \left[ \|\dot{x}(t)\|^2 + \tfrac14 \l \tfrac{\dot{m}(t)}{m(t)}\r^2 \right] \cdot m(t)\,\diff t
		\label{eq:HKDiracGeodesicXMCost} \\
	& = \left[ \|\dot{x}(t)\|^2 + \tfrac14 \l \tfrac{\dot{m}(t)}{m(t)}\r^2 \right] \cdot m(t)
	\quad \tn{for } t \in (0,1).
		\label{eq:HKDiracGeodesicXMCostConst}
\end{align}
If $m_0>0$ this interval can be extended to $t=0$, if $m_1>0$ it can be extended to $t=1$.
Finally, for $m_0>0$ one has
\begin{align}
	\dot{x}(0) & = \frac{x_1-x_0}{\|x_1-x_0\|} \cdot \sqrt{\frac{m_1}{m_0}} \cdot \sin(\|x_0-x_1\|), &
	\frac{\dot{m}(0)}{m(0)} & = 2 \l \sqrt{\frac{m_1}{m_0}} \cdot \cos(\|x_0-x_1\|) -1\r
	\label{eq:HKDiracGeodesicZeroDerivatives}
\end{align}
with the convention that $\dot{x}(0)=0$ for $x_1=x_0$.
\end{proposition}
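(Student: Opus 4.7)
My plan is to reduce the transport case $\|x_0-x_1\|<\tfrac{\pi}{2}$ to the Euclidean geometry of $\mathbb{C}$ via the local isometry of Remark~\ref{rem:DiracConeIsometry}, then handle the teleportation case $\|x_0-x_1\|>\tfrac{\pi}{2}$ separately as a pure Hellinger--Kakutani interpolation, and finally read off the derivative identities by direct differentiation.

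For the transport case, the plan is to parametrize the straight line in $\mathbb{C}$ from $\sqrt{m_0}$ to $\sqrt{m_1}\,\exp(i\|x_0-x_1\|)$ by $\gamma(t) = (1-t)\sqrt{m_0} + t\sqrt{m_1}\exp(i\|x_0-x_1\|)$, then write $\gamma(t)=\sqrt{M(t)}\exp(i\varphi(t))$ in polar form. Taking modulus squared gives $M(t) = (1-t)^2 m_0 + t^2 m_1 + 2t(1-t)\sqrt{m_0m_1}\cos\|x_0-x_1\|$, matching \eqref{eq:HKDiracGeodesicM}; taking the real part and dividing by $\sqrt{M(t)}$ gives the formula for $\cos\varphi(t)$, matching \eqref{eq:HKDiracGeodesicPhi}. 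Then $X(t)$ from \eqref{eq:HKDiracGeodesicX} is the point on the segment from $x_0$ toward $x_1$ at distance $\varphi(t)$ from $x_0$, so in particular $\|\dot X(t)\| = |\dot\varphi(t)|$. Next I would verify that the proposed $(\rho_t,\omega_t,\zeta_t)$ lies in $\CES(\delta_{x_0} m_0,\delta_{x_1} m_1)$: for $\phi\in C^1([0,1]\times\Omega)$ one evaluates $\int \phi(t,X(t)) M(t)\,dt$ and differentiates in $t$, using the chain rule to split the time derivative into a $\partial_t\phi$ term, a $\nabla\phi\cdot\partial_t X$ term (producing $\omega$), and a $\phi\cdot\partial_t M$ term (producing $\zeta$); the boundary terms give the right-hand side. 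Then I would plug $(\rho,\omega,\zeta)$ into $J_{\HK}$: since everything is concentrated on $X(t)$, the Radon--Nikodym derivatives are $\dot X(t)$ and $\dot M(t)/M(t)$, so the integrand is
\[
	\Big(\|\dot X(t)\|^2 + \tfrac14\big(\tfrac{\dot M(t)}{M(t)}\big)^2\Big)\cdot M(t) = \dot\varphi(t)^2 M(t) + \tfrac{\dot M(t)^2}{4M(t)}.
\]
The main computational obstacle is recognizing this as $|\dot\gamma(t)|^2$, which follows from differentiating $\gamma=\sqrt{M}\,e^{i\varphi}$; since $\gamma$ is an affine parametrization of a straight segment in $\mathbb{C}$, this quantity is constant in $t$ and equals $|\gamma(1)-\gamma(0)|^2 = m_0+m_1-2\sqrt{m_0 m_1}\cos\|x_0-x_1\|$, which equals $\HK(\delta_{x_0} m_0,\delta_{x_1} m_1)^2$ by \eqref{eq:HKDirac}. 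This gives both that $J_{\HK}$ is attained (so $(\rho,\omega,\zeta)$ is a minimizer by Proposition~\ref{prop:HKBasic}) and that it is a constant-speed parametrization, from which \eqref{eq:HKDiracGeodesicXMCost} and \eqref{eq:HKDiracGeodesicXMCostConst} drop out immediately.

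For the teleportation case $\|x_0-x_1\|>\tfrac{\pi}{2}$, I would verify the continuity equation with source with $\omega=0$ (which is trivial since the two supports stay fixed), and compute $J_{\HK}$ by the same formula but now with only the $\zeta$-term, getting $(m_0+m_1)$. Since $\cos\|x_0-x_1\|<0$ is replaced by $\Cos\|x_0-x_1\|=0$ in \eqref{eq:HKDirac}, this matches $\HK^2$. For $\|x_0-x_1\|=\tfrac{\pi}{2}$, the two costs coincide and any convex combination (with mass partitioned between a transport component and a teleportation component) achieves the same total cost, giving the stated non-uniqueness.

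Finally, the derivative identities \eqref{eq:HKDiracGeodesicZeroDerivatives} follow by straightforward differentiation of \eqref{eq:HKDiracGeodesicPhi} and \eqref{eq:HKDiracGeodesicM} at $t=0$ (assuming $m_0>0$ so that the expressions are smooth there): one computes $\dot M(0) = 2(\sqrt{m_0 m_1}\cos\|x_0-x_1\|-m_0)$ directly, and differentiates the $\cos^{-1}$ relation to get $\dot\varphi(0)=\sqrt{m_1/m_0}\sin\|x_0-x_1\|$, then uses $\dot X(0) = \tfrac{x_1-x_0}{\|x_1-x_0\|}\dot\varphi(0)$. The convention $\dot x(0)=0$ when $x_1=x_0$ is consistent because $\varphi\equiv 0$ in that case.
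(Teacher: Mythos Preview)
Your approach is correct and is essentially the one the paper uses: the paper's proof defers the form of the geodesic to external references and then, for the derivative $\dot{x}(0)$, explicitly points to Remark~\ref{rem:HKDiracGeodesics}, which spells out precisely the $\mathbb{C}$-isometry argument you give (parametrize the straight segment $\gamma(t)$, read off $M$ and $\varphi$ as squared modulus and argument, and use $|\dot\gamma|^2=\dot\varphi^2 M+\dot M^2/(4M)$). Your derivation is more self-contained than the paper's proof proper, but it is the same idea.

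One small technical caveat: differentiating the $\cos^{-1}$ relation \eqref{eq:HKDiracGeodesicPhi} directly at $t=0$ lands you in a $0/0$ situation, since $\varphi(0)=0$ puts you at the boundary of $\arccos$ and the numerator derivative also vanishes there. It is cleaner to differentiate the sine relation $\sin\varphi(t)=t\sqrt{m_1}\sin\|x_0-x_1\|/\sqrt{M(t)}$ (which comes from $\im\gamma$), since $\cos\varphi(0)=1$ then gives $\dot\varphi(0)=\sqrt{m_1/m_0}\sin\|x_0-x_1\|$ immediately; equivalently, use the conserved angular momentum $\|\dot x(t)\|\cdot m(t)=\sqrt{m_0 m_1}\sin\|x_0-x_1\|$ as the paper's Remark~\ref{rem:HKDiracGeodesics} does.
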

\begin{proof}
	The forms of $X, M$ and $(\rho, \omega, \zeta)$ are given, for instance, in \cite[Theorem 4.1]{ChizatOTFR2015}.
	The precise form of the functions $X$ and $M$ is given in the respective proof. These formulas can also be found in \cite{LieroMielkeSavare-HellingerKantorovich-2015a}, equations (8.7).
	 Equation~\eqref{eq:HKDiracGeodesicMAlt} can be found immediately with \eqref{eq:HKDirac}.
	 
	 Equation~\eqref{eq:HKDiracGeodesicXMCost} follows directly by plugging the constructed optimal $(\rho,\omega,\zeta)$ into $J_{\HK}$.
	 Equation~\eqref{eq:HKDiracGeodesicXMCostConst} holds for Lebesgue-almost all $t \in (0,1)$ by Proposition~\ref{prop:HKBasic}.
	 Extension to all $t \in (0,1)$ follows from smoothness of the functions $x$ and $m$ on $t \in (0,1)$. This smoothness extends to $t=0$ and $t=1$ when $m_0>0$ and $m_1>$ respectively.
	 
	 For $m_0>0$, $\frac{\dot{m}(0)}{m(0)}$ can easily be evaluated. In principle $\dot{x}(0)$ could also be obtained by explicit (but tedious) computation. Alternatively, one can argue via conservation of angular momentum, as in Remark~\ref{rem:HKDiracGeodesics}.
\end{proof}

\begin{remark}
\label{rem:HKDiracGeodesics}
It is instructive to study in more detail the precise form of the functions $\varphi$, $X$ and $M$ in~\eqref{eq:HKDiracGeodesic} for $\|x_0-x_1\|<\tfrac{\pi}{2}$. Let $x(t)\assign X(x_0,m_0,x_1,m_1;t)$, $m(t) \assign M(x_0,m_0,x_1,m_1;t)$ and $\varphi(t) \assign \varphi(x_0,m_0,x_1,m_1;t)$.

Set $\varphi_1 \assign \|x_0-x_1\|$, $a_0 \assign \sqrt{m_0}$, $a_1 \assign \sqrt{m_1} \cdot \exp(i\cdot \varphi_1)$. In Remark \ref{rem:DiracConeIsometry} it was noted that $\HK(\delta_{x_0} \cdot m_0, \delta_{x_1} \cdot m_1)^2 = \|a_0-a_1\|^2$ and we deduced that the constant speed geodesic between the two measures is described by a constant speed straight line in $\C$, given by $a(t) \assign (1-t)\,a_0 + t\,a_1$.
$m(t)$ and $\varphi(t)$ can then be recovered from the relation $a(t)=\sqrt{m(t)} \cdot \exp(i \cdot \varphi(t))$.
We obtain that
\begin{align*}
m(t) & =\|a(t)\|^2 = (1-t)^2 \cdot \|a_0\|^2 + t^2 \cdot \|a_1\|^2 + 2\,t\,(1-t) \cdot \re(a_0 \cdot a_1) \\
	& = (1-t)^2 \cdot m_0 + t^2 \cdot m_1 + 2\,t\,(1-t) \cdot \sqrt{m_0\,m_1} \cdot \cos(\varphi_1).
\end{align*}
This corresponds to \eqref{eq:HKDiracGeodesicM}.
Via the law of cosines one finds
\begin{align*}
 \|a(t)-a_0\|^2 = \|a_0\|^2 + \|a(t)\|^2 - 2 \sqrt{\|a_0\|\,\|a(t)\|} \cos(\varphi(t))
\end{align*}
from which, using $\|a(t)\|^2=m(t)$, one quickly deduces \eqref{eq:HKDiracGeodesicPhi}. The angle $\varphi$ must then be `projected' back to the straight line between $x_0$ and $x_1$ via \eqref{eq:HKDiracGeodesicX}.

Since $a(t)$ describes constant speed movement on a straight line in $\C$ one has
\begin{align}
	\label{eq:HKDiracGeodesicELA}
	\const = a(t) \times \dot{a}(t) = \dot{\varphi}(t) \cdot \|a(t)\|^2
	= \|\dot{x}(t)\| \cdot m(t) \assignRe \omega_0.
\end{align}
Here $a \times b$ denotes $\re(a) \cdot \im(b)-\im(a) \cdot \re(b)$ which is related to the cross product.
This corresponds to the conservation of angular momentum. With $\dot{a}(t)=a_1-a_0$ one finds
\begin{align}
	\label{eq:HKDiracGeodesicOmegaZero}
	\omega_0 & = a_0 \times (a_1-a_0) = a_0 \times a_1 = \|a_0\| \cdot \|a_1\| \cdot \sin(\varphi_1)=
	\sqrt{m_0\,m_1} \cdot \sin(\varphi_1).
\end{align}
When $m_0>0$, combining \eqref{eq:HKDiracGeodesicELA} and \eqref{eq:HKDiracGeodesicOmegaZero} yields
\begin{align*}
\|\dot{x}(0)\|=\sqrt{\tfrac{m_1}{m_0}} \sin(\|x_1-x_0\|)
\end{align*}
from which one obtains the left side of \eqref{eq:HKDiracGeodesicZeroDerivatives}.
\end{remark}
\vspace{\baselineskip}

\subsection{Kantorovich-type formulation}
\label{sec:HKKantorovich}
Similar to the classical $\W_2$ distance there are also (multiple) static (\emph{Kantorovich-type}) formulations of $\HK$ in terms of measures on the product space $\Omega \times \Omega$.
Unlike in the classical `balanced' case, transport can no longer be described by a coupling $\pi \in \Pi(\mu_0,\mu_1)$, since particles may change their mass during transport (and $\mu_0$ and $\mu_1$ may even have different mass, i.e.~$\Pi(\mu_0,\mu_1)=\emptyset$).

In the static formulation for $\HK$ given in \cite{LieroMielkeSavare-HellingerKantorovich-2015a} the effect of mass changes is captured by choosing a particular cost function and by relaxing the marginal constraints $\proj_{i\sharp} \pi = \mu_i$ from Proposition \ref{prop:WStatic} and penalizing the difference with the Kullback--Leibler divergence instead.

\begin{definition}[Kullback--Leibler divergence]
For $\mu, \nu \in \meas(\Omega)$ we set
\begin{align}
	\KL(\mu|\nu) & = \begin{cases}
		\int \varphi(\RadNik{\mu}{\nu})\,\diff \nu & \tn{if } \mu,\nu \geq 0, \mu \ll \nu, \\
		+ \infty & \tn{else}
		\end{cases}
\end{align}
with $\varphi(s) = s\,\log(s)-s+1$ for $s>0$ and $\varphi(0)=1$. 
\end{definition}

Note that $\varphi$ is strictly convex and continuous on $\R_+$, and $\KL$ is convex and lower-semicontinuous on $\meas(\Omega)^2$.
We now state a Kantorovich formulation of $\HK$, cf Proposition~\ref{prop:WStatic}.

\begin{proposition}[Kantorovich-type formulation of $\HK$: soft-marginals {\cite[Theorem 8.18]{LieroMielkeSavare-HellingerKantorovich-2015a}}]
\label{prop:HKStatic}
Let
\begin{subequations}
\label{eq:HKSoftMarginal}
\begin{align}
	\label{eq:HKSoftMarginalCost}
	\cHK(x_0,x_1) & \assign \begin{cases}
		-2\log(\cos(\|x_0-x_1\|)) & \tn{if } \|x_0-x_1\| < \tfrac{\pi}{2} \\
		+ \infty & \tn{else.}
	\end{cases}	 \\
	\label{eq:HKSoftMarginalObjective}
	\JHKSM(\pi) & \assign
	\int_{\Omega^2} \cHK\,\diff \pi
		+ \sum_{i\in \{0,1\}} \KL(\proj_{i\sharp} \pi|\mu_i). \\
	\intertext{Then}
	\HK(\mu_0,\mu_1)^2 & = \inf\left\{  \JHKSM(\pi) \,\middle|\,
		\pi \in \measp(\Omega^2)
		\right\}
	\label{eq:HKSoftMarginalProblem}
\end{align}
\end{subequations}
and minimal $\pi$ in \eqref{eq:HKSoftMarginalProblem} exist.
\end{proposition}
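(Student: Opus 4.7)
The plan is to prove the two inequalities in \eqref{eq:HKSoftMarginalProblem} separately, and to handle existence of minimizers by the direct method at the end.

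\emph{Upper bound $\HK(\mu_0,\mu_1)^2\leq\inf\JHKSM$.} Fix feasible $\pi$ with finite cost. Finiteness of the $\KL$ terms forces $\proj_{i\sharp}\pi\ll\mu_i$ with densities $\sigma_i\assign\tfrac{\diff \proj_{i\sharp}\pi}{\diff\mu_i}$. I would construct an admissible triple $(\rho,\omega,\zeta)\in\CES(\mu_0,\mu_1)$ by superposing the Dirac-to-Dirac $\HK$-geodesics of Proposition~\ref{prop:HKDiracGeodesics}: to each $(x_0,x_1)\in\tn{spt}\,\pi$ assign the geodesic between $\delta_{x_0}/\sigma_0(x_0)$ and $\delta_{x_1}/\sigma_1(x_1)$, and integrate against $\pi$. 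The reciprocal weights $1/\sigma_i$ are chosen precisely so that the time-marginals at $t=0,1$ equal $\mu_0,\mu_1$, as a short disintegration check confirms. Joint convexity of $J_\HK$ in $(\rho,\omega,\zeta)$ (via the perspective function of $|\cdot|^2$) gives
\[
J_\HK(\rho,\omega,\zeta)\;\leq\;\int_{\Omega^2}\HK\!\left(\delta_{x_0}/\sigma_0(x_0),\,\delta_{x_1}/\sigma_1(x_1)\right)^{2}\diff\pi(x_0,x_1),
\]
and substituting the closed form \eqref{eq:HKDirac} reduces the desired bound to the pointwise log inequality $1+\log w\leq w$ with $w=\cos\|x_0-x_1\|/\sqrt{\sigma_0(x_0)\sigma_1(x_1)}$; the remaining terms assemble exactly into $\int c_\HK\,\diff\pi+\sum_i\KL(\proj_{i\sharp}\pi|\mu_i)$.

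\emph{Lower bound $\HK(\mu_0,\mu_1)^2\geq\inf\JHKSM$.} I would pass through the cone picture foreshadowed in Remark~\ref{rem:DiracConeIsometry}: attach a radial coordinate $r\geq 0$ to obtain the cone $\mathfrak C(\Omega)=\Omega\times[0,\infty)/(\Omega\times\{0\})$ on which the lifted continuity equation has no source (mass creation becomes divergence in the radial direction). From an optimal dynamic triple, read off the lifted curve on $\mathfrak C(\Omega)$, apply the Ambrosio--Gigli--Savar\'e superposition principle there to obtain a probability measure $\eta$ on absolutely continuous cone paths, and project endpoints to get a coupling $\tilde\pi$ on $\mathfrak C(\Omega)^2$ whose squared cone-distance integrates to $J_\HK(\rho,\omega,\zeta)$. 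Fixing the spatial projection $\pi\in\measp(\Omega^2)$ of $\tilde\pi$ and optimising over the radial marginals is an explicit one-dimensional problem whose optimiser saturates $1+\log w=w$ and returns $\JHKSM(\pi)$. Existence of minimisers then follows from the direct method: $c_\HK$ is lower semicontinuous and bounded below on the compact set $\Omega^2$, both $\KL$ terms are weak-$*$ lower semicontinuous, and the trivial competitor $\pi=0$ bounds $\inf\JHKSM$ by $\|\mu_0\|+\|\mu_1\|$, which via the $\KL$ terms controls the total mass of any minimising sequence.

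\emph{Main obstacle.} The hard step is the lower bound, where one must apply the superposition principle on the cone $\mathfrak C(\Omega)$, whose singular apex at $r=0$ represents mass created from (or destroyed into) nothing. Paths touching the apex correspond exactly to the ``third, measure-valued tangent component'' flagged in the introduction, and the angular projection back to $\Omega$ is ambiguous there. Accounting for this contribution carefully, and reconciling it with the teleportation regime $\|x_0-x_1\|\geq\pi/2$ where $c_\HK=+\infty$ (so $\pi$ must not see those pairs and the corresponding mass flows through the apex instead), is the delicate technical core of the argument.
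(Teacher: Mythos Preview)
The paper does not prove this proposition at all: it is stated with a direct citation to \cite[Theorem 8.18]{LieroMielkeSavare-HellingerKantorovich-2015a} and no proof is given. So there is no ``paper's own proof'' to compare against. That said, the paper's proof of Proposition~\ref{prop:HKGeodesicsSoftMarginal} does contain, as a by-product, the inequality $J_{\HK}(\rho,\omega,\zeta)\leq\JHKSM(\pi)$ for the particular $(\rho,\omega,\zeta)$ constructed there, which is essentially your upper bound argument carried out carefully.

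Your sketch is broadly in the spirit of the Liero--Mielke--Savar\'e approach (cone lift plus superposition for the lower bound, explicit construction for the upper bound). One concrete gap in your upper bound: your superposition of Dirac-to-Dirac geodesics, integrated against $\pi$, only recovers the part of $\mu_i$ that is absolutely continuous with respect to $\proj_{i\sharp}\pi$. On the set $\{\sigma_i=0\}$---which may carry positive $\mu_i$-mass---your constructed $\rho$ has the wrong boundary values, so $(\rho,\omega,\zeta)\notin\CES(\mu_0,\mu_1)$ in general. You need to add the ``teleport'' pieces $(1-t)^2\mu_0^\perp$ and $t^2\mu_1^\perp$ (as the paper does in \eqref{eq:HKGeodesicsSoftMarginalRho}--\eqref{eq:HKGeodesicsSoftMarginalZeta}); their $J_{\HK}$-cost is $\|\mu_0^\perp\|+\|\mu_1^\perp\|$, which is exactly the contribution of $\varphi(0)=1$ to the $\KL$ terms in $\JHKSM(\pi)$, so the inequality survives. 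Your lower bound sketch via the cone is the right idea and you correctly identify the apex as the delicate point, but as written it is a pointer to the argument rather than the argument itself; filling in the superposition principle on $\mathfrak C(\Omega)$ and the subsequent radial optimisation is several pages of work in the cited reference.
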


As in the Wasserstein case there are conditions under which optimal transport maps exist for $\HK$.

\begin{proposition}
	\label{prop:HKMonge}
	Let $\mu_0 \in \measpL(\Omega)$, $\mu_1 \in \measp(\Omega)$. Then the minimizer $\pi$ for $\HK(\mu_0,\mu_1)^2$ in \eqref{eq:HKSoftMarginal} is unique and induced by a Monge map, i.e.~there is some $T : \Omega \to \Omega$ such that $\pi=(\id,T)_{\sharp} \sigma$ for $\sigma = \proj_{0\sharp} \pi$.
\end{proposition}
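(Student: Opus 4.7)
The plan is to reduce the soft-marginal problem to a classical two-marginal Kantorovich problem with absolutely continuous source, and then invoke a Gangbo--McCann type existence/uniqueness theorem.

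First I would fix any minimizer $\pi$ of $\JHKSM$ (which exists by Proposition~\ref{prop:HKStatic}) and set $\sigma_i \assign \proj_{i\sharp}\pi$ for $i=0,1$. Finiteness of $\KL(\sigma_0|\mu_0)$ forces $\sigma_0 \ll \mu_0$, and $\mu_0 \ll \Lebesgue$ by assumption, so $\sigma_0 \ll \Lebesgue$. Moreover, for any alternative $\pi' \in \Pi(\sigma_0,\sigma_1)$ the KL-terms of $\JHKSM(\pi')$ equal those of $\JHKSM(\pi)$, so optimality of $\pi$ implies that $\pi$ minimizes the linear transport functional $\pi' \mapsto \int \cHK\,\diff \pi'$ over $\Pi(\sigma_0,\sigma_1)$. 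Finiteness of this transport cost additionally forces $\pi$ to be concentrated on $\{\|x_0-x_1\|<\pi/2\}$, where $\cHK$ is smooth.

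Next I would verify the Gangbo--McCann twist condition for $\cHK$. A direct computation yields
\begin{equation*}
	\nabla_{x_0}\cHK(x_0,x_1) = \frac{2\tan(\|x_0-x_1\|)}{\|x_0-x_1\|}(x_0-x_1)
\end{equation*}
for $0<\|x_0-x_1\|<\pi/2$, extending continuously to $0$ at $x_1=x_0$. Since $r \mapsto 2\tan(r)$ is a strictly increasing bijection from $[0,\pi/2)$ onto $[0,\infty)$, both the direction and the magnitude of $x_0-x_1$ can be recovered from this gradient, so $x_1 \mapsto \nabla_{x_0}\cHK(x_0,x_1)$ is injective on $B(x_0,\pi/2)$ for each fixed $x_0$. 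Combined with $\sigma_0 \ll \Lebesgue$, the classical Gangbo--McCann theorem (e.g.\ \cite[Theorem 10.28]{Villani-OTOAN2008}) then gives that the optimal coupling of $(\sigma_0,\sigma_1)$ for the cost $\cHK$ is unique and of Monge form $\pi = (\id,T)_\sharp \sigma_0$ for some measurable $T:\Omega\to\Omega$.

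Finally, for uniqueness of $\pi$ among all $\JHKSM$-minimizers, suppose $\pi^1,\pi^2$ are both optimal. Then $\tfrac12(\pi^1+\pi^2)$ is optimal as well, and since $\pi'\mapsto\int\cHK\,\diff\pi'$ is linear while $\nu\mapsto\KL(\nu|\mu_i)$ is strictly convex on $\measp(\Omega)$, one must have $\proj_{i\sharp}\pi^1 = \proj_{i\sharp}\pi^2$ for $i=0,1$. Hence $\pi^1$ and $\pi^2$ are both optimal for the same classical transport problem with the same absolutely continuous first marginal, so the previous step forces $\pi^1=\pi^2$. The hard part will be justifying the application of Gangbo--McCann to a cost that takes the value $+\infty$ outside an open ball: this is handled by observing that any finite-cost plan is supported strictly inside $\{\|x_0-x_1\|<\pi/2\}$, so the standard smoothness-and-twist arguments apply on the effective support without modification.
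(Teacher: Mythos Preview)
Your argument is correct and follows the natural reduction-to-balanced strategy. The paper, by contrast, does not give any argument at all: its proof consists of the single sentence ``This is a direct application of \cite[Theorem 6.6]{LieroMielkeSavare-HellingerKantorovich-2015a}.'' So the comparison is between a bare citation and a self-contained proof.

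What you gain is transparency: your reduction makes explicit why absolute continuity of $\mu_0$ suffices (it transfers to $\sigma_0$ via finiteness of $\KL$), why the cost $\cHK$ satisfies the twist condition (the explicit gradient computation), and why uniqueness among all soft-marginal minimizers follows (strict convexity of $\KL$ pins down the marginals, after which the balanced uniqueness kicks in). The cited theorem in \cite{LieroMielkeSavare-HellingerKantorovich-2015a} packages essentially the same ingredients, but your version would let a reader verify the claim without consulting that reference.

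One point to tighten: your closing remark that ``the standard smoothness-and-twist arguments apply on the effective support without modification'' is correct in spirit but slightly glib. The cleanest way to handle the $+\infty$ values is to note that any finite-cost $\pi$ is concentrated on the open set $\{\|x_0-x_1\|<\pi/2\}$, that $c$-cyclical monotonicity of the optimal $\pi$ holds on this set, and that the usual differentiability argument for $c$-concave potentials only uses local smoothness of $\cHK$ near support points. Alternatively, one can invoke a version of the twist theorem stated for lower-semicontinuous costs that are locally Lipschitz on an open set carrying the plan; such statements appear, for instance, in the framework of \cite[Chapter 10]{Villani-OTOAN2008}. Either way the gap is purely technical and your sketch of its resolution is accurate.
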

\begin{proof}
	This is a direct application of \cite[Theorem 6.6]{LieroMielkeSavare-HellingerKantorovich-2015a}.
\end{proof}

\begin{remark}[Global mass rescaling behaviour of $\HK$]
	\label{rem:HKMassScaling}
	Let $\mu_0, \mu_1 \in \prob(\Omega)$ and $m_0,m_1 \in \R_+$. It was shown in \cite[Theorem 3.3]{LaMi2017} that
	\begin{align}
		\label{eq:HKMassScaling}
		\HK(m_0 \cdot \mu_0, m_1 \cdot \mu_1)^2 = \sqrt{m_0 \cdot m_1} \cdot \HK(\mu_0,\mu_1)^2 + (\sqrt{m_0}-\sqrt{m_1})^2
	\end{align}
	and if $\pi$ is optimal in \eqref{eq:HKSoftMarginal} for $\HK(\mu_0,\mu_1)^2$ then $\sqrt{m_0\cdot m_1} \cdot \pi$ is optimal for $\HK(m_0 \cdot \mu_0, m_1 \cdot \mu_1)^2$.

	This implies that the `unbalanced' effects of the $\HK$ distance are already fully encoded in its behaviour on probability measures. Extension to measures of arbitrary mass is done via the simple formula \eqref{eq:HKMassScaling}.
	
	Consequently, the benefit for data analysis applications that we expect from using $\HK$ instead of $\W_2$ is not so much the ability to deal with differences in the \emph{total} mass of measures, but its ability to deal with \emph{local} mass discrepancies, i.e.~creating mass in one part of the image while reducing it in another part, if this seems more likely than a long range transport.
	
	Therefore, for numerical purposes we typically normalize our samples before comparison. If the total mass of samples is deemed relevant for the subsequent analysis, its effect can be recovered via the transformation \eqref{eq:HKMassScaling} or the total masses can be kept as separate features.
\end{remark}

The following lemma, giving some properties of optimal couplings, will be useful in the sequel.

\begin{lemma} 
	\label{lem:SoftCouplingProperties}
	Let $\mu_0, \mu_1 \in \measp(\Omega)$ and $\pi \in \measp(\Omega^2)$ be a minimizer for $\HK(\mu_0,\mu_1)$ in \eqref{eq:HKSoftMarginal}.
	Set $\nu_i=\proj_{i\sharp} \pi$ and let
	\begin{align}
		\label{eq:SoftCouplingDecomp}
		\mu_i = u_i \cdot \nu_i + \mu_i^\perp
	\end{align}
	be the Lebesgue-decomposition of $\mu_i$ with respect to $\nu_i$ for $i \in \{0,1\}$.
	Then,
	\begin{enumerate}[(i)]
		\item $\nu_i \ll \mu_i$, \label{item:SemiCouplingPropertiesMargDom}
		\item $\|x_0-x_1\|\geq \pi/2$ $\mu_0(x_0)$-$\mu_1^\perp(x_1)$-almost everywhere, in particular $\mu_0\perp\mu_1^\perp$ and the same statement holds with the roles of $\mu_0$ and $\mu_1$ reversed.
		\label{item:SemiCouplingPropertiesMargPerp} %
	\end{enumerate}
\end{lemma}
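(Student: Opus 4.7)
For part (i) the plan is to just use finiteness of the optimum. The trivial choice $\pi=0$ is admissible in \eqref{eq:HKSoftMarginalProblem} with $\JHKSM(0)=\KL(0|\mu_0)+\KL(0|\mu_1)=\mu_0(\Omega)+\mu_1(\Omega)<\infty$, so optimality of $\pi$ forces $\JHKSM(\pi)<\infty$, hence $\KL(\nu_i|\mu_i)<\infty$ for $i\in\{0,1\}$. By the very definition of $\KL$ this already encodes $\nu_i\ll\mu_i$.

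For (ii) I would argue by contradiction via an explicit perturbation. Suppose $(\mu_0\otimes\mu_1^\perp)(\{\|x_0-x_1\|<\pi/2\})>0$; by inner regularity there exist $\delta>0$ and a compact $E\subset\Omega^2$ with $\|x_0-x_1\|\le\pi/2-\delta$ on $E$ and $(\mu_0\otimes\mu_1^\perp)(E)>0$. Set $\sigma:=\mathbf{1}_E\cdot(\mu_0\otimes\mu_1^\perp)\in\measp(\Omega^2)$, $\sigma_i:=\proj_{i\sharp}\sigma$, and test with the admissible competitor $\pi_\epsilon:=\pi+\epsilon\sigma$ for small $\epsilon>0$. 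The transport-cost increment is bounded by $\epsilon\cdot\sigma(\Omega^2)\cdot(-2\log\sin\delta)$ and is in particular linear in $\epsilon$.

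The heart of the argument is the asymmetric behaviour of the two marginal penalties. On side $1$, the Lebesgue decomposition gives $\sigma_1\ll\mu_1^\perp$ while $\nu_1\perp\mu_1^\perp$, so the densities of $\nu_1$ and $\epsilon\sigma_1$ with respect to $\mu_1$ have disjoint supports. Writing $h_1=d\sigma_1/d\mu_1^\perp$ (which is bounded by $\mu_0(\Omega)$), a direct computation yields
\begin{equation*}
\KL(\nu_1+\epsilon\sigma_1|\mu_1)-\KL(\nu_1|\mu_1)=\int\bigl(\varphi(\epsilon h_1)-1\bigr)\,d\mu_1^\perp=\epsilon\log\epsilon\cdot\sigma_1(\Omega)+O(\epsilon).
\end{equation*}
On side $0$ I would instead use convexity of $\nu\mapsto\KL(\nu|\mu_0)$ applied to $\nu_0+\epsilon\sigma_0=(1-\epsilon)\nu_0+\epsilon(\nu_0+\sigma_0)$; since $\sigma_0$ has bounded density with respect to $\mu_0$ and $\KL(\nu_0|\mu_0)<\infty$, one gets $\KL(\nu_0+\sigma_0|\mu_0)<\infty$, and the convex inequality delivers $\KL(\nu_0+\epsilon\sigma_0|\mu_0)-\KL(\nu_0|\mu_0)\le C_0\,\epsilon$ for a fixed constant $C_0$. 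Combining,
\begin{equation*}
\JHKSM(\pi_\epsilon)-\JHKSM(\pi)\le\epsilon\log\epsilon\cdot\sigma_1(\Omega)+O(\epsilon),
\end{equation*}
and since $\sigma_1(\Omega)=(\mu_0\otimes\mu_1^\perp)(E)>0$ this is strictly negative for all sufficiently small $\epsilon>0$, contradicting optimality of $\pi$.

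To upgrade the a.e.\ statement to the mutual singularity $\mu_0\perp\mu_1^\perp$ I would use a covering argument: pick a countable cover $\{B_k\}$ of $\Omega$ by open balls of radius $\pi/5$ so that $\|x_0-x_1\|<\pi/2$ on each $B_k\times B_k$. Then $\mu_0(B_k)\cdot\mu_1^\perp(B_k)=0$ for every $k$, and taking $U$ to be the union of those $B_k$ with $\mu_0(B_k)=0$ yields $\mu_0(U)=0$ together with $\mu_1^\perp(U^c)=0$. The statement with the roles of $\mu_0,\mu_1$ reversed follows by the same reasoning after swapping indices, which is legitimate since $\JHKSM$ is symmetric in its two marginals. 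I expect the main obstacle to be the bookkeeping in the third paragraph: one needs a genuine $\epsilon\log\epsilon$ gain on side $1$ (hence the explicit computation with $\varphi$) while keeping the side-$0$ penalty at $O(\epsilon)$ (hence the convexity trick, which sidesteps a Taylor expansion around a possibly vanishing density $d\nu_0/d\mu_0$).
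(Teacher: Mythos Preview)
Your proof is correct and follows essentially the same strategy as the paper's: use $\pi=0$ for part~(i), and for part~(ii) perturb $\pi$ by a small multiple of a measure supported where $\|x_0-x_1\|<\pi/2$, bound the side-$0$ $\KL$ increment by $O(\epsilon)$ via convexity, and exploit that the side-$1$ increment behaves like $\epsilon\log\epsilon$ (equivalently, $\varphi'(0^+)=-\infty$).

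The only notable differences are cosmetic. The paper restricts attention to a product set $B_0\times B_1$ with $\nu_1(B_1)=0$ and uses the normalized perturbation $\lambda=\tfrac{\mu_0\restr B_0}{\mu_0(B_0)}\otimes\tfrac{\mu_1\restr B_1}{\mu_1(B_1)}$; because $\lambda_1$ then has constant density $1/\mu_1(B_1)$ with respect to $\mu_1\restr B_1$, the side-$1$ increment is simply $[\varphi(\delta/\mu_1(B_1))-\varphi(0)]\cdot\mu_1(B_1)$, avoiding your explicit $\epsilon\log\epsilon$ expansion. Your choice $\sigma=\mathbf{1}_E\cdot(\mu_0\otimes\mu_1^\perp)$ on a general compact $E$ is slightly more flexible but forces you to compute $\int(\varphi(\epsilon h_1)-1)\,d\mu_1^\perp$ with a non-constant $h_1$; this is fine since $h_1\le\mu_0(\Omega)$ keeps $\int h_1\log h_1\,d\mu_1^\perp$ finite. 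Your explicit covering argument for $\mu_0\perp\mu_1^\perp$ is a nice addition---the paper asserts this ``in particular'' without spelling it out.
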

Intuitively, the mass particles in $\mu_0^\perp$ and $\mu_1^\perp$ do not undergo any transport, but are entirely handled by the Hellinger term in \eqref{eq:HKBB}, or by the $\KL$ terms in \eqref{eq:HKSoftMarginal}. Lemma \ref{lem:SoftCouplingProperties} \eqref{item:SemiCouplingPropertiesMargPerp} states that this only happens if there is no other alternative, i.e.~any potential transport target is at least at distance $\pi/2$ and therefore has $+\infty$ cost in \eqref{eq:HKSoftMarginal}.

\begin{proof}
\eqref{item:SemiCouplingPropertiesMargDom}: Plugging $\pi=0$ into \eqref{eq:HKSoftMarginal} we find that $\HK(\mu_0,\mu_1)^2 \leq \|\mu_0\|+\|\mu_1\|<\infty$.
Thus the optimal $\pi$ must satisfy $\KL(\proj_{i\sharp}\pi|\mu_i)<\infty$ which implies $\nu_i = \proj_{i\sharp}\pi \ll \mu_i$.

\eqref{item:SemiCouplingPropertiesMargPerp}:
	Assume the statement were not true. By inner regularity of Radon measures there will then be measurable $B_0, B_1 \subset \Omega$ and $\veps>0$ such that
	\begin{align*}
		\|x_0-x_1\| \leq \tfrac{\pi}{2}-\veps \quad
		\forall x_0 \in B_0,\, x_1 \in B_1, \qquad
		\mu_0(B_0)>0, \qquad \mu_1(B_1)=\mu_1^\perp(B_1)>0,
		\qquad
		\nu_1(B_1)=0.
	\end{align*}
	Set $\lambda_i \assign \tfrac{\mu_i \restr B_i}{\mu_i(B_i)}$ for $i \in \{0,1\}$ and $\lambda \assign \lambda_0 \otimes \lambda_1$. For $\delta \geq 0$ we now analyze
	\begin{align*}
		\JHKSM(\pi + \delta \cdot \lambda)-\JHKSM(\pi) & = \delta \cdot \int_{\Omega^2} \cHK\,\diff \lambda
			+ [\KL(\nu_ 0 \restr B_0 + \delta \cdot \lambda_0|\mu_0 \restr B_0)-\KL(\nu_0 \restr B_0|\mu_0\restr B_0)] \\
			& \qquad + [\KL(\delta \cdot \lambda_1|\mu_1 \restr B_1)-\KL(0|\mu_1\restr B_1)].
	\end{align*}
	Since $\|x_1-x_0\|\leq \pi/2-\veps$ $\lambda(x_0,x_1)$-almost everywhere, the first term is linear in $\delta$ with finite slope. For the second term we obtain by convexity of $\KL$ that it is bounded from above by
	\begin{align*}
		\delta \cdot [\KL(\nu_0 \restr B_0+\lambda_0|\mu_0 \restr B_0) - \KL(\nu_0 \restr B_0|\mu_0 \restr B_0)].
	\end{align*}
	This expression is linear in $\delta$ with finite slope since $\RadNik{\lambda_0}{\mu_0}=\mu_0(B_0)^{-1}$ on $B_0$.
	The third term equals
	\begin{align*}
		[\varphi(\delta/\mu_1(B_1))-\varphi(0)] \cdot \mu_1(B_1).
	\end{align*}
	Together this implies that
	\begin{align*}
		\JHKSM(\pi + \delta \cdot \lambda)-\JHKSM(\pi) \leq \delta \cdot C + [\varphi(\delta/\mu_1(B_1))-\varphi(0)] \cdot \mu_1(B_1)
	\end{align*}
	for some $C \in \R$. Since the slope of $\varphi(z)$ diverges to $-\infty$ as $z \searrow 0$, for sufficiently small $\delta>0$ this will be negative. Hence, $\pi$ would not be optimal, concluding the proof by contradiction.
\end{proof}
\vspace{\baselineskip}

\subsection{Geodesics from static optimal couplings}
\label{sec:HKGeodesics}
Similar to the standard Wasserstein-2 distance, constant speed geodesics for $\HK(\mu_0,\mu_1)$ can be constructed via the superposition of Dirac-to-Dirac geodesics although the formula is more complicated, cf Proposition~\ref{prop:WGeodesics}.

\begin{proposition}[General geodesics for $\HK$ from optimal soft-marginal coupling]
	\label{prop:HKGeodesicsSoftMarginal}
	Let $\mu_0, \mu_1 \in \measp(\Omega)$ and let $\pi \in \measp(\Omega \times \Omega)$ be a corresponding minimizer of \eqref{eq:HKSoftMarginal}. Set $\nu_i = \proj_{i\sharp} \pi$ and consider the Lebesgue decompositions of $\mu_i$ with respect to $\nu_i$ as follows:
	\begin{align*}
		\mu_i = u_i \cdot \nu_i + \mu_i^\perp
	\end{align*}
	where $u_i=\RadNik{\mu_i}{\nu_i}$ is the density of the $\nu_i$-absolutely continuous part of $\mu_i$ and $\mu_i^\perp$ is the singular part.
	For convenience we introduce the auxiliary functions
	\begin{align}
		\hat{X}(x_0,x_1;t) & \assign X\big(x_0,u_0(x_0),x_1,u_1(x_1);t\big), &
		\hat{M}(x_0,x_1;t) & \assign M\big(x_0,u_0(x_0),x_1,u_1(x_1);t\big).
	\end{align}
	These are well-defined $\pi(x_0,x_1)$-almost everywhere.

	Then a constant speed geodesic between $\mu_0$ and $\mu_1$ is given by:
	\begin{subequations}
	\label{eq:HKGeodesicsSoftMarginal}
	\begin{align}
		\tilde{\rho}_t & \assign \hat{X}(\cdot,\cdot;t)_\sharp \left( \hat{M}(\cdot,\cdot;t) \cdot \pi \right)
		\label{eq:HKGeodesicsSoftMarginalRhoPre} \\
		\rho_t & \assign \tilde{\rho}_t + (1-t)^2 \cdot \mu_0^\perp + t^2 \cdot \mu_1^\perp
		\label{eq:HKGeodesicsSoftMarginalRho} \\
		\omega_t & \assign
			\hat{X}(\cdot,\cdot;t)_\sharp \left( \partial_t \hat{X}(\cdot,\cdot;t) \cdot \hat{M}(\cdot,\cdot;t) \cdot \pi \right)
		\label{eq:HKGeodesicsSoftMarginalOmega} \\
		\tilde{\zeta}_t & \assign \hat{X}(\cdot,\cdot;t)_\sharp \left( \partial_t \hat{M}(\cdot,\cdot;t) \cdot \pi \right)
		\label{eq:HKGeodesicsSoftMarginalZetaPre} \\
		\zeta_t & \assign \tilde{\zeta}_t - 2(1-t) \cdot \mu_0^\perp + 2t \cdot \mu_1^\perp
		\label{eq:HKGeodesicsSoftMarginalZeta}		
		\intertext{When $\pi=(\id,T)_\sharp \sigma$ is induced by a Monge map, see Proposition~\ref{prop:HKMonge}, then the above formulas simplify and one obtains:}
		\label{eq:HKGeodesicsSoftMarginalSimpleRho}
		\tilde{\rho}_t & = \hat{X}(\cdot,T(\cdot);t)_\sharp
			\l \hat{M}(\cdot,T(\cdot);t) \cdot \sigma \r \\
		\label{eq:HKGeodesicsSoftMarginalSimpleOmega}
		\omega_t & = \hat{X}(\cdot,T(\cdot);t)_\sharp
				\l \partial_t \hat{X}(\cdot,T(\cdot);t)
				\cdot \hat{M}(\cdot,T(\cdot);t) \cdot \sigma \r \\
		\label{eq:HKGeodesicsSoftMarginalSimpleZeta}
		\tilde{\zeta}_t & = \hat{X}(\cdot,T(\cdot);t)_\sharp
			\l \partial_t \hat{M}(\cdot,T(\cdot);t) \cdot \sigma \r
	\end{align}
	\end{subequations}
\end{proposition}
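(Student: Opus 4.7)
My plan is to verify two things: first, that the triple $(\rho,\omega,\zeta)$ defined by \eqref{eq:HKGeodesicsSoftMarginal} lies in $\CES(\mu_0,\mu_1)$; and second, that its action $J_{\HK}(\rho,\omega,\zeta)$ equals $\HK(\mu_0,\mu_1)^2$. Proposition~\ref{prop:HKBasic} then guarantees that it is a constant-speed geodesic. The underlying strategy is superposition: each pair $(x_0,x_1)$ on the support of $\pi$ is assigned the Dirac-to-Dirac geodesic from $u_0(x_0)\delta_{x_0}$ to $u_1(x_1)\delta_{x_1}$ via Proposition~\ref{prop:HKDiracGeodesics}, while the singular parts $\mu_0^\perp$ and $\mu_1^\perp$ are handled by pure destruction and creation respectively, as motivated by Lemma~\ref{lem:SoftCouplingProperties}\eqref{item:SemiCouplingPropertiesMargPerp}. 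A preliminary observation is that finiteness of $\JHKSM(\pi)$ forces $\|x_0-x_1\|<\tfrac{\pi}{2}$ for $\pi$-a.e.\ $(x_0,x_1)$, so the Dirac-to-Dirac formulas apply on the support of $\pi$.

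\textbf{Continuity equation with source.} For the boundary conditions I would use $\hat X(x_0,x_1;0)=x_0$ and $\hat M(x_0,x_1;0)=u_0(x_0)$, giving $\tilde\rho_0=u_0\cdot\nu_0$; adding $\mu_0^\perp$ then recovers $\mu_0$ via the Lebesgue decomposition \eqref{eq:SoftCouplingDecomp}, and analogously at $t=1$. For the distributional CES identity, I would pair $(\tilde\rho,\omega,\tilde\zeta)$ against a test function $\phi\in C^1([0,1]\times\Omega)$ and use Fubini together with the Pettis integration framework of the preliminaries to reduce the verification to the pointwise CES statement of Proposition~\ref{prop:HKDiracGeodesics} applied to each Dirac-to-Dirac geodesic. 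The singular pieces $(1-t)^2\mu_0^\perp+t^2\mu_1^\perp$ in $\rho$ with zero $\omega$ and source $-2(1-t)\mu_0^\perp+2t\mu_1^\perp$ satisfy CES by direct differentiation in $t$.

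\textbf{Action computation and main obstacle.} By \eqref{eq:HKDiracGeodesicXMCost} each Dirac-to-Dirac trajectory contributes per unit of $\pi$-mass the value $\HK(u_0(x_0)\delta_{x_0},u_1(x_1)\delta_{x_1})^2=u_0(x_0)+u_1(x_1)-2\sqrt{u_0(x_0)u_1(x_1)}\,\Cos(\|x_0-x_1\|)$ via \eqref{eq:HKDirac}, and a short direct calculation (substituting $\omega=0$ and the explicit $\zeta$ into $J_{\HK}$ and integrating over $t$) yields $\|\mu_0^\perp\|+\|\mu_1^\perp\|$ from the singular parts. To match the total to $\HK(\mu_0,\mu_1)^2=\JHKSM(\pi)$ I would invoke first-order optimality of $\pi$: varying the density of $\pi$ at $(x_0,x_1)$ in its support gives $\sqrt{u_0(x_0)u_1(x_1)}\cos(\|x_0-x_1\|)=1$, i.e.\ $\cHK(x_0,x_1)=\log u_0(x_0)+\log u_1(x_1)$. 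Expanding $\KL(\nu_i|\mu_i)=\int(u_i-1-\log u_i)\,d\nu_i+\|\mu_i^\perp\|$ and substituting, both the dynamic action and $\JHKSM(\pi)$ collapse to $\|\mu_0\|+\|\mu_1\|-2\|\pi\|$. The main obstacle is this cost-matching step: handling the logarithms requires care, which Lemma~\ref{lem:SoftCouplingProperties}\eqref{item:SemiCouplingPropertiesMargDom} facilitates via $\nu_i\ll\mu_i$; alternatively one can establish only the inequality $J_{\HK}(\rho,\omega,\zeta)\leq \HK(\mu_0,\mu_1)^2$ and appeal to Proposition~\ref{prop:HKBasic}. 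The Monge-map specialization \eqref{eq:HKGeodesicsSoftMarginalSimpleRho}--\eqref{eq:HKGeodesicsSoftMarginalSimpleZeta} then follows by rewriting the pushforwards of $\pi=(\id,T)_\sharp\sigma$.
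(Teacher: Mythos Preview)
Your verification of the continuity equation with source matches the paper's proof closely. The gap is in the action computation. When you write that ``each Dirac-to-Dirac trajectory contributes per unit of $\pi$-mass the value $\HK(u_0(x_0)\delta_{x_0},u_1(x_1)\delta_{x_1})^2$,'' you are implicitly asserting that $J_{\HK}$ of the superposition $(\rho,\omega,\zeta)$ equals the $\pi$-integral of the per-trajectory actions. But $J_{\HK}$ is not additive: it has the form $\int f(\tfrac{d\omega}{d\rho},\tfrac{d\zeta}{d\rho})\,d\rho$ with $f$ strictly convex, and after the pushforward by $\hat X(\cdot,\cdot;t)$ several pairs $(x_0,x_1)$ may land on the same point, so their velocity and growth contributions mix nonlinearly. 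Without further argument you only obtain the sub-additive inequality $J_{\HK}(\rho,\omega,\zeta)\le \int \HK(u_0\delta_{x_0},u_1\delta_{x_1})^2\,d\pi + \|\mu_0^\perp\|+\|\mu_1^\perp\|$, not equality. Your first-order optimality identity $\sqrt{u_0u_1}\cos\|x_0-x_1\|=1$ and the reduction of both sides to $\|\mu_0\|+\|\mu_1\|-2\|\pi\|$ are correct, but they do not bypass this step.

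The paper handles precisely this point and follows what you list as the alternative route. It uses sub-additivity of $J_{\HK}$ (convexity plus positive $1$-homogeneity) to split off the singular pieces, then a Jensen-type inequality for Radon--Nikodym derivatives under pushforward (Lemma~\ref{lem:JensenPushfwd}) to pass from $(\tilde\rho_t,\omega_t,\tilde\zeta_t)$ to the integral over $\Omega^2$, and finally the pointwise bound of Lemma~\ref{lem:HKKLInequality} with $m=1$ to compare the per-pair $\HK^2$ to $\cHK+\varphi(1/u_0)u_0+\varphi(1/u_1)u_1$. This chain gives $J_{\HK}(\rho,\omega,\zeta)\le \JHKSM(\pi)=\HK(\mu_0,\mu_1)^2$, which is already enough; equality throughout is then read off a posteriori (Corollary~\ref{cor:HKDecompositionForm}). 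Your optimality identity is exactly the equality case of Lemma~\ref{lem:HKKLInequality} at $m=1$, so your shortcut recovers that corollary directly, but the Jensen step under pushforward is still the missing ingredient in your primary line of argument.
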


The following Lemmas are needed for the proof (and in the sequel).

\begin{lemma}
	\label{lem:JensenPushfwd}
	Let $A, B$ be measurable spaces, $T : A \to B$ measurable, $\mu \in \meas(A)^n$, $\nu\in\measp(A)$, $\mu \ll \nu$ (which implies $T_\sharp \mu \ll T_\sharp \nu$), and $f : \R^n \to \RCupInf$ convex.
	Then
	\begin{align*}
		\int_B f\left(\RadNik{T_\sharp \mu}{T_\sharp \nu}\right)\,\diff T_\sharp \nu
		\leq \int_A f\left(\RadNik{\mu}{\nu}\right)\,\diff \nu.
	\end{align*}
\end{lemma}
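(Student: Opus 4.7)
The plan is to reduce the inequality to the affine case by writing the convex function $f$ as a pointwise supremum of affine minorants. For affine functionals the inequality becomes an equality by linearity of the integral and the defining property of the pushforward, and this equality can then be pushed through the supremum to recover the desired bound.

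Concretely, for any $a \in \R^n$ and $b \in \R$ the affine function $\ell(y) = \langle a, y\rangle + b$ satisfies
\begin{align*}
\int_A \ell\!\left(\RadNik{\mu}{\nu}\right) \diff \nu = \langle a, \mu(A)\rangle + b\,\nu(A) = \langle a, (T_\sharp\mu)(B)\rangle + b\,(T_\sharp\nu)(B) = \int_B \ell\!\left(\RadNik{T_\sharp\mu}{T_\sharp\nu}\right) \diff T_\sharp\nu,
\end{align*}
using $\mu(A) = (T_\sharp\mu)(B)$ and $\nu(A) = (T_\sharp\nu)(B)$. Now assume first that $f$ is proper and lower semicontinuous. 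Then there is a countable family of continuous affine minorants $\ell_k \le f$ with $\sup_k \ell_k = f$ pointwise, and after replacing $\ell_k$ by $\max_{j \le k} \ell_j$ one may assume $\ell_k \uparrow f$. Fixing any single affine minorant $\ell_0 \le f$ and applying monotone convergence to the non-negative sequence $\ell_k - \ell_0$, together with the affine identity above, gives
\begin{align*}
\int_B f\!\left(\RadNik{T_\sharp\mu}{T_\sharp\nu}\right) \diff T_\sharp\nu = \lim_{k\to\infty} \int_A \ell_k\!\left(\RadNik{\mu}{\nu}\right) \diff \nu \le \int_A f\!\left(\RadNik{\mu}{\nu}\right) \diff \nu.
\end{align*}
For a general convex $f$ one first passes to the lower semicontinuous envelope $\bar f = f^{**} \le f$ and applies the above; the convex functions that actually appear later in the paper ($y\mapsto\|y\|^2$ and $\varphi$ in the Kullback--Leibler divergence) are already lsc on $\R^n$, so this refinement is not required for the intended applications.

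The main technical subtlety I foresee is justifying the limit--integral interchange: monotone convergence to $\ell_k-\ell_0$ requires both $\int_A \ell_0(\RadNik{\mu}{\nu})\,\diff\nu$ and $\int_B \ell_0(\RadNik{T_\sharp\mu}{T_\sharp\nu})\,\diff T_\sharp\nu$ to be finite. If $\int_A f(\RadNik{\mu}{\nu})\,\diff\nu = +\infty$ the inequality is trivial; otherwise $\ell_0 \le f$ forces finiteness on the $A$-side, and the affine identity transports finiteness to the $B$-side. A conceptually shorter alternative via disintegration of $\nu$ along $T$ and the conditional Jensen inequality is available whenever $A$ and $B$ are standard Borel, but the affine-minorant route has the advantage of working for arbitrary measurable spaces as in the hypothesis.
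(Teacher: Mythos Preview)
Your argument has a real gap. After you replace $\ell_k$ by $\max_{j\le k}\ell_j$ the functions are no longer affine, so the ``affine identity above'' does \emph{not} apply to them; yet in the displayed chain
\[
\int_B f\!\left(\RadNik{T_\sharp\mu}{T_\sharp\nu}\right)\diff T_\sharp\nu
=\lim_{k\to\infty}\int_A \ell_k\!\left(\RadNik{\mu}{\nu}\right)\diff\nu
\]
you are invoking exactly that. If this equality held, then monotone convergence on the $A$-side would give $\int_B f(\cdots)=\int_A f(\cdots)$ for every lsc convex $f$, which is false (take $A=\{0,1\}$, $\nu=\tfrac12(\delta_0+\delta_1)$, $\mu=\delta_1$, $T$ constant, $f(y)=y^2$: the two sides are $1$ and $2$). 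What is actually true for $h_k=\max_{j\le k}\ell_j$ is only the inequality $\int_B h_k(\cdots)\,\diff T_\sharp\nu\le\int_A h_k(\cdots)\,\diff\nu$, and proving \emph{that} is essentially the lemma again for piecewise-affine convex functions. It can be done by a localization step---partition $B$ according to which $\ell_j$ realizes the max, apply the affine identity on each piece (it holds on measurable subsets as well: $\int_{E}\ell(\cdots)\,\diff T_\sharp\nu=\int_{T^{-1}E}\ell(\cdots)\,\diff\nu$), and use $\ell_j\le h_k$ on the $A$-side---but this step is missing from your write-up.

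For comparison, the paper takes the disintegration route you mention at the end: it disintegrates $\nu$ along $T$ to obtain the pointwise identity $\RadNik{T_\sharp\mu}{T_\sharp\nu}(b)=\int_A \RadNik{\mu}{\nu}\,\diff\nu_b$ and then applies the ordinary Jensen inequality fibrewise. Your observation that the affine-minorant strategy avoids the standard-Borel hypothesis is a genuine advantage, but to make it a proof you must supply the localization argument above (or an equivalent device) that turns the affine \emph{equality} into the piecewise-affine \emph{inequality}.
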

\begin{proof}
By disintegration of measures (e.g.~\cite[Thm.~5.3.1]{AmbrosioGradientFlows2005}) there exists a measurable family $\{\nu_b\}_{b\in B}\subset \prob(A)$ such that $T(a)=b$ for $\nu_b$-a.e.~$a$ for $T_\sharp \nu$-a.e.~$b$ and
\[ \int_A \phi(a) \, \diff \nu(a) = \int_B \left[ \int_{A} \phi(a) \,\diff\nu_b(a) \right] \diff T_\sharp \nu(b) \]
for all measurable maps $\phi : A \to[0,\infty]$.
Consequently, for any measurable $\phi : B \to [0,\infty]^n$ one has
\begin{multline*}
	\int_B \phi(b) \cdot \RadNik{T_\sharp \mu}{T_\sharp \nu}(b)\,\diff T_\sharp \nu(b)
	= \int_B \phi(b) \cdot \diff T_\sharp \mu(b)
	= \int_A \phi(T(a)) \cdot \diff \mu(a)
	= \int_A \phi(T(a)) \cdot \RadNik{\mu}{\nu}(a)\,\diff \nu(a) \\
	= \int_B \left[ \int_A \phi(T(a)) \cdot \RadNik{\mu}{\nu}(a)\,\diff \nu_b(a) \right]
		\diff T_\sharp \nu(b)
	= \int_B \phi(b) \cdot \left[ \int_A \RadNik{\mu}{\nu}(a)\,\diff \nu_b(a) \right]
		\diff T_\sharp \nu(b).
\end{multline*}
From this we deduce that
\begin{align}
	\label{eq:HKChofVar}
	\RadNik{T_\sharp \mu}{T_\sharp \nu}(b) = \int_A \RadNik{\mu}{\nu}(a)\,\diff \nu_b(a)
\end{align}
for $T_\sharp \nu$-a.e.~$b \in B$.
Now one finds
\begin{align*}
	\int_A f\left(\RadNik{\mu}{\nu}\right)\,\diff \nu
	& = \int_B \left[ \int_A f\left(\RadNik{\mu}{\nu}(a)\right)\,\diff \nu_b(a) \right] \diff T_\sharp \nu(b) \\
	& \geq \int_B \left[ f \left(
		\int_A \RadNik{\mu}{\nu}(a)\,\diff \nu_b(a) \right) \right] \diff T_\sharp \nu(b) \\
	& = \int_B f\left(\RadNik{T_\sharp \mu}{T_\sharp \nu}(b)\right)\,\diff T_\sharp \nu(b)
\end{align*}
where the inequality is due to Jensen and the last equality is due to \eqref{eq:HKChofVar}.
\end{proof}

\begin{lemma}
	\label{lem:HKKLInequality}
	Let $x_0,x_1 \in \Omega$, $u_0,u_1,m \in \R_+$. Then
	\begin{align}
	\label{eq:HKKLInequality}
		\cHK(x_0,x_1) \cdot m + \sum_{i \in \{0,1\}} \varphi\left(\tfrac{m}{u_i}\right) \cdot u_i
		\geq \HK(u_0 \cdot \delta_{x_0},u_1 \cdot \delta_{x_1})^2
	\end{align}
	where for $u_i=0$ we use the convention $\varphi(m/u_i) \cdot u_i=0$ if $m=0$ and $+\infty$ if $m>0$.
	Equality holds if and only if $m=\cos(\min\{\|x_0-x_1\|,\pi/2\}) \cdot \sqrt{u_0 \cdot u_1}$.
\end{lemma}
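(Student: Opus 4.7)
The plan is to view the left-hand side as a function of $m\ge 0$ (with $u_0,u_1,x_0,x_1$ fixed), show that it is convex, and minimize it explicitly; the minimum value will equal the right-hand side and the minimizer will be $m^\ast = \Cos(\|x_0-x_1\|)\sqrt{u_0 u_1}$. Splitting according to $d:=\|x_0-x_1\|$ makes the conventions transparent.

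First I dispose of the degenerate regime. If $d\ge \pi/2$, then $\cHK(x_0,x_1)=+\infty$, so any $m>0$ makes the left-hand side equal $+\infty$ and the bound is vacuous; for $m=0$ we use $\varphi(0)=1$ to get $0+u_0+u_1$, which matches the right-hand side since $\HK(u_0\delta_{x_0},u_1\delta_{x_1})^2=u_0+u_1-2\sqrt{u_0u_1}\,\Cos(d)=u_0+u_1$. This also recovers the asserted minimizer $m^\ast=0=\Cos(d)\sqrt{u_0 u_1}$. Similarly, if $u_0=0$ or $u_1=0$ the convention $\varphi(m/u_i)u_i=+\infty$ for $m>0$ forces $m=0$, and both sides again reduce to $u_0+u_1$.

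In the remaining generic case $d<\pi/2$ and $u_0,u_1>0$, I substitute the definitions of $\varphi$ and $\cHK$ and collect terms:
\begin{align*}
\cHK(x_0,x_1)\cdot m + \sum_{i\in\{0,1\}}\varphi\l \tfrac{m}{u_i}\r u_i
& = -2m\log\cos d + m\log\frac{m^2}{u_0 u_1} - 2m + u_0+u_1 \\
& = 2\bigl[m\log(m/a)-m\bigr] + u_0+u_1,
\end{align*}
where $a:=\sqrt{u_0u_1}\,\cos d>0$. Call this function $F(m)$. Since $F''(m)=2/m>0$ on $m>0$, $F$ is strictly convex, and $F'(m)=2\log(m/a)$ vanishes uniquely at $m=a$. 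Plugging in yields
\[ F(a) = -2a + u_0+u_1 = u_0+u_1 - 2\sqrt{u_0u_1}\,\Cos(d) = \HK(u_0\delta_{x_0},u_1\delta_{x_1})^2, \]
giving both the inequality \eqref{eq:HKKLInequality} and the equality case $m=a=\Cos(d)\sqrt{u_0u_1}$.

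There is no real obstacle here; the only care needed is to match the two sides of the identification $\cos d=\Cos(d)$ on $d<\pi/2$, and to track the conventions $\varphi(0)=1$ and $\varphi(m/0)\cdot 0$ so that the boundary values $u_i=0$ and $d\ge \pi/2$ are covered. The computation also explains the formula structurally: the sum $\cHK\cdot m+\sum\varphi(m/u_i)u_i$ is nothing but the Bregman-type convexification of $m\mapsto 2m\log m$ around the optimal transported mass $a$, so its minimum must reproduce the Dirac-to-Dirac $\HK^2$ cost of Proposition~\ref{prop:HKBasic}.
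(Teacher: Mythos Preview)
Your proof is correct and follows exactly the approach the paper sketches: treat the left-hand side as a (strictly) convex function of $m$, separate the cases $\|x_0-x_1\|\ge\pi/2$ and $u_i=0$, and minimize explicitly in the remaining generic case. You have simply filled in the details the paper omits. One minor slip: the closing remark should cite the Dirac-to-Dirac formula \eqref{eq:HKDirac} rather than Proposition~\ref{prop:HKBasic}.
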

\begin{proof}
	The left side is a strictly convex function of $m \in \R_+$.
	Separating the different cases, e.g.~whether $\|x_0-x_1\|$ is smaller or greater-equal than $\pi/2$, and whether some $u_i$ are zero, one quickly finds that the minimum equals the right side and that the unique minimizer is as specified.
\end{proof}

We can now prove Proposition~\ref{prop:HKGeodesicsSoftMarginal}.

\begin{proof}[Proof of Proposition \ref{prop:HKGeodesicsSoftMarginal}]
	We start by showing that $(\rho,\omega,\zeta)$ as constructed above are contained in $\CES(\mu_0,\mu_1)$.
	Let $\phi \in C^1([0,1] \times \Omega)$. Then one finds
	\begin{align*}
		&
		\int_{[0,1] \times \Omega} \partial_t \phi\,\diff \rho
		+ \int_{[0,1] \times \Omega} \nabla \phi \cdot \diff \omega
		+ \int_{[0,1] \times \Omega} \phi\,\diff \zeta
		\displaybreak[0] \\
		= {} &
			\int_0^1 \int_{\Omega^2} \Big[
				(\partial_t \phi)(t,\hat{X}(x_0,x_1;t)) \cdot \hat{M}(x_0,x_1;t) \\
		& \qquad \qquad +(\nabla \phi)(t,\hat{X}(x_0,x_1;t)) \cdot \partial_t \hat{X}(x_0,x_1;t) \cdot \hat{M}(x_0,x_1;t) \\
		& \qquad \qquad + \phi(t,\hat{X}(x_0,x_1;t)) \cdot \partial_t \hat{M}(x_0,x_1;t)
				\Big]
				\diff \pi(x_0,x_1)\,
				\diff t \\
		& \qquad + \int_0^1 \int_\Omega \Big[
			(\partial_t \phi)(t,\cdot) \cdot (1-t)^2 - \phi(t,\cdot) \cdot 2(1-t) \Big] \diff \mu_0^\perp \,\diff t \\
		& \qquad + \int_0^1 \int_\Omega \Big[
			(\partial_t \phi)(t,\cdot) \cdot t^2 + \phi(t,\cdot) \cdot 2t \Big] \diff \mu_1^\perp \,\diff t
		\displaybreak[0] \\
		= {} &
			\int_0^1 \int_{\Omega^2} \frac{\diff}{\diff t}\Big[
				\phi(t,\hat{X}(x_0,x_1;t)) \cdot \hat{M}(x_0,x_1;t)
				\Big]
				\diff \pi(x_0,x_1)\,
				\diff t \\
		& \qquad + \int_0^1 \int_{\Omega} \frac{\diff}{\diff t}\Big[
				\phi(t,\cdot) \cdot (1-t)^2
				\Big]
				\diff \mu_0^\perp\,
				\diff t
		+ \int_0^1 \int_{\Omega} \frac{\diff}{\diff t}\Big[
				\phi(t,\cdot) \cdot t^2
				\Big]
				\diff \mu_1^\perp\,
				\diff t \\
		= {} &
			\int_{\Omega^2}
				\Big[\phi(t,\hat{X}(x_0,x_1;t)) \cdot \hat{M}(x_0,x_1;t)\Big]_{t=0}^1
				\diff \pi(x_0,x_1)
			+ \int_\Omega \phi(1,\cdot)\,\diff \mu_1^\perp
			- \int_\Omega \phi(0,\cdot)\,\diff \mu_0^\perp
				\assignRe \tn{($\ast$)}
	\end{align*}
	Now we use that for $\pi$-almost all $(x_0,x_1)$ one has
	\begin{align*}	
	\hat{X}(x_0,x_1;0) & =x_0, &
	\hat{X}(x_0,x_1;1) & =x_1, &
	\hat{M}(x_0,x_1;0) &=u_0(x_0), &
	\hat{M}(x_0,x_1;1) &=u_1(x_1)
	\end{align*}
	to find:
	\begin{align*}
		\tn{($\ast$)} & =
			\int_{\Omega^2}
				\Big[\phi(1,x_1) \cdot u_1(x_1)
				-\phi(0,x_0) \cdot u_0(x_0)
				\Big]
				\diff \pi(x_0,x_1)
			+ \int_\Omega \phi(1,\cdot)\,\diff \mu_1^\perp
			- \int_\Omega \phi(0,\cdot)\,\diff \mu_0^\perp \\
		& = \int_\Omega \phi(1,\cdot)\,\diff \mu_1 - \int_\Omega \phi(0,\cdot)\,\diff \mu_0.
	\end{align*}

	Next, we show optimality in \eqref{eq:HKBBMin}.
	Plugging $(\rho,\omega,\zeta)$ constructed above into $J_{\HK}$, \eqref{eq:HKBBJ}, one obtains
	\begin{multline}
		J_{\HK}(\rho,\omega,\zeta) 
		\leq J_{\HK}\big(\tilde{\rho}_t \otimes \diff t, \omega, \tilde{\zeta}_t \otimes \diff t\big) 
		\\
		+ J_{\HK}\big((1-t)^2 \cdot \mu_0^\perp \otimes \diff t, 0, - 2 (1-t) \cdot \mu_0^\perp \otimes \diff t\big)
		+ J_{\HK}\big(t^2 \cdot \mu_1^\perp \otimes \diff t, 0, 2 t \cdot \mu_1^\perp \otimes \diff t\big)
		\label{eq:HKGeodesicConstructionProofJDecomp}
	\end{multline}
	where we decompose $(\rho,\omega,\zeta)$ into the moving and teleporting parts and use that $J_{\HK}$ is sub-additive (which follows from convexity and positive 1-homogeneity). See Remark \ref{rem:TimeNotation} for the notation.
	For the second term we get
	\begin{multline}
		J_{\HK}\big((1-t)^2 \cdot \mu_0^\perp \otimes \diff t, 0, - 2 (1-t) \cdot \mu_0^\perp \otimes \diff t\big)		
		= \int_0^1 \int_\Omega \tfrac{1}{4} \left(\frac{-2(1-t)}{(1-t)^2}\right)^2 \,(1-t)^2\diff \mu_0^\perp \,\diff t
		= \|\mu_0^\perp\|
		\label{eq:HKGeodesicConstructionProofJDecompB}
	\end{multline}
	and likewise for the third term
	\begin{align}
		J_{\HK}\big(t^2 \cdot \mu_1^\perp \otimes \diff t, 0, 2 t \cdot \mu_1^\perp \otimes \diff t\big) = \|\mu_1^\perp\|.
		\label{eq:HKGeodesicConstructionProofJDecompC}
	\end{align}
	For the first term we find
	\begin{align}
		& J_{\HK}\big(\tilde{\rho}_t \otimes \diff t, \omega, \tilde{\zeta}_t \otimes \diff t\big)
		= \int_0^1 \int_\Omega \left( \left\|\RadNik{\omega_t}{\tilde{\rho}_t}\right\|^2 + \tfrac{1}{4} \left(\RadNik{\tilde{\zeta}_t}{\tilde{\rho}_t}\right)^2 \right)\,\diff \tilde{\rho}_t \,\diff t
		\nonumber \\		
		\intertext{and then by using Lemma \ref{lem:JensenPushfwd} (and flipping the order of integration)}
		\leq {} & \int_{\Omega^2} \int_0^1 \left( \left\|\partial_t \hat{X}(x_0,x_1;t)\right\|^2 + \tfrac{1}{4} \left(\frac{\partial_t \hat{M}(x_0,x_1;t)}{\hat{M}(x_0,x_1;t)}\right)^2 \right)\,\hat{M}(x_0,x_1)\,\diff t\,\diff \pi(x_0,x_1)
		\nonumber \\
		\intertext{with Proposition~\ref{prop:HKDiracGeodesics} one obtains that the inner integral equals $\HK\left(
			u_0(x_0) \cdot \delta_{x_0},u_1(x_1) \cdot \delta_{x_1}\right)^2$ and thus}
		= {} & \int_{\Omega^2} \HK\left(
			u_0(x_0) \cdot \delta_{x_0},u_1(x_1) \cdot \delta_{x_1}\right)^2\,\diff \pi(x_0,x_1)
		\label{eq:HKGeodesicConstructionProofJDecompIntermediate} \\
		\intertext{and further with \eqref{eq:HKKLInequality} (with $m=1$)}
		\leq {} & \int_{\Omega^2} \left[
			\cHK(x_0,x_1) + \sum_{i \in \{0,1\}} \varphi\left(\frac{1}{u_i(x_i)} \right) u_i(x_i)
			\right] \diff\pi(x_0,x_1)
		\leq \int_{\Omega^2} \cHK \,\diff \pi + \sum_{i \in \{0,1\}} \KL(\nu_i|u_i \cdot \nu_i)
		\label{eq:HKGeodesicConstructionProofJDecompA}
	\end{align}
	Combining now \eqref{eq:HKGeodesicConstructionProofJDecomp}-\eqref{eq:HKGeodesicConstructionProofJDecompA} and using the mutual singularity of $\nu_i$ and $\mu_i^\perp$ we find
	\begin{align*}
		J_{\HK}(\rho,\omega,\zeta) \leq \int_{\Omega^2} \cHK \,\diff \pi + \sum_{i \in \{0,1\}} \KL(\proj_{i\sharp} \pi|\mu_i)
		= \JHKSM(\pi)
	\end{align*}
	and by optimality of $\pi$ for \eqref{eq:HKSoftMarginalProblem}, $(\rho,\omega,\zeta)$ must be optimal for \eqref{eq:HKBB}.
\end{proof}

\begin{corollary}
	\label{cor:HKDecompositionForm}
	When $\pi$ is a minimizer of \eqref{eq:HKSoftMarginalObjective} for computing $\HK(\mu_0,\mu_1)^2$ then
	\begin{align*}
		\HK(\mu_0,\mu_1)^2 = \int_{\Omega^2} \HK(u_0(x_0) \cdot \delta_{x_0}, u_1(x_1) \cdot \delta_{x_1})^2\,\diff \pi(x_0,x_1)
		+ \|\mu_0^\perp\| + \|\mu_1^\perp\|
	\end{align*}
	with $u_i$, $\mu_i^\perp$, $i \in \{0,1\}$ as in Proposition \ref{prop:HKGeodesicsSoftMarginal}.
\end{corollary}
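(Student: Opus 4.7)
The proof is essentially already present in the chain of inequalities used to establish Proposition \ref{prop:HKGeodesicsSoftMarginal}; the corollary is obtained by reading off an intermediate expression and invoking a sandwich argument.

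More concretely, my plan is the following. First, observe that the chain of estimates \eqref{eq:HKGeodesicConstructionProofJDecomp}--\eqref{eq:HKGeodesicConstructionProofJDecompA} in the proof of Proposition~\ref{prop:HKGeodesicsSoftMarginal} produces the bound
\begin{align*}
J_{\HK}(\rho,\omega,\zeta)
\;\leq\; \int_{\Omega^2} \HK\bigl(u_0(x_0)\cdot\delta_{x_0},\,u_1(x_1)\cdot\delta_{x_1}\bigr)^{2}\,\diff\pi(x_0,x_1) + \|\mu_0^\perp\| + \|\mu_1^\perp\|
\;\leq\; \JHKSM(\pi),
\end{align*}
where the first inequality combines \eqref{eq:HKGeodesicConstructionProofJDecomp}, the evaluation \eqref{eq:HKGeodesicConstructionProofJDecompB}--\eqref{eq:HKGeodesicConstructionProofJDecompC} of the two teleportation contributions, and the bound \eqref{eq:HKGeodesicConstructionProofJDecompIntermediate} for the moving part, while the second inequality is the application of Lemma~\ref{lem:HKKLInequality} with $m=1$.

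Next, I would close the loop: since $(\rho,\omega,\zeta)\in\CES(\mu_0,\mu_1)$ by the first part of the proof of Proposition~\ref{prop:HKGeodesicsSoftMarginal}, Definition~\ref{def:HKBB} yields
\begin{align*}
\HK(\mu_0,\mu_1)^{2} \;\leq\; J_{\HK}(\rho,\omega,\zeta),
\end{align*}
and because $\pi$ is assumed to be a minimizer of the soft-marginal problem we also have $\JHKSM(\pi)=\HK(\mu_0,\mu_1)^{2}$. Inserting these two facts sandwiches the intermediate quantity between two copies of $\HK(\mu_0,\mu_1)^{2}$, forcing every inequality in the chain to be an equality. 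Reading off the middle term gives exactly the identity claimed in the corollary.

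There is essentially no obstacle beyond bookkeeping: the work was already done inside the proof of Proposition~\ref{prop:HKGeodesicsSoftMarginal}. The only mild subtlety is to make sure the intermediate expression is isolated cleanly, i.e.\ to separate the Jensen step (Lemma~\ref{lem:JensenPushfwd}) from the subsequent application of Lemma~\ref{lem:HKKLInequality}, so that the sandwich argument pins down the precise quantity $\int_{\Omega^2}\HK(u_0(x_0)\delta_{x_0},u_1(x_1)\delta_{x_1})^{2}\,\diff\pi + \|\mu_0^\perp\|+\|\mu_1^\perp\|$ rather than any looser bound.
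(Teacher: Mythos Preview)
Your proposal is correct and follows essentially the same approach as the paper: both argue that optimality of $(\rho,\omega,\zeta)$ (equivalently, the sandwich $\HK(\mu_0,\mu_1)^2 \le J_{\HK}(\rho,\omega,\zeta) \le \JHKSM(\pi) = \HK(\mu_0,\mu_1)^2$) forces every inequality in the chain \eqref{eq:HKGeodesicConstructionProofJDecomp}--\eqref{eq:HKGeodesicConstructionProofJDecompA} to be an equality, and then read off the intermediate expression \eqref{eq:HKGeodesicConstructionProofJDecompIntermediate} together with \eqref{eq:HKGeodesicConstructionProofJDecompB}--\eqref{eq:HKGeodesicConstructionProofJDecompC}.
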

\begin{proof}
	By showing that $(\rho,\omega,\zeta)$ is optimal for $J_{\HK}$ in the previous proof we showed that all inequalities in the previous proof are actually equalities. The result then follows by combining \eqref{eq:HKGeodesicConstructionProofJDecomp} -- \eqref{eq:HKGeodesicConstructionProofJDecompIntermediate}.
\end{proof}
\vspace{\baselineskip}

\section{Linearized Hellinger--Kantorovich distance}
\label{sec:HKLin}

In Sections \ref{sec:W2Riemann} and \ref{sec:W2Lin} we recapped the Riemannian structure of the Wasserstein distance and its local linearization, which we now adapt to the Hellinger--Kantorovich distance.

\subsection{Riemannian structure}
\label{sec:HKRiemann}
First we seek the equivalent of \eqref{eq:W2TangentForm}, i.e.~we want to express $\HK(\mu_0,\mu_1)^2$ in terms of the particles' initial tangent direction at $t=0$.
Since $\HK$ allows transport as well as mass changes, the tangent space will now consist of a velocity field and a mass growth field. Some particular care must be applied to the regions where `teleport' occurs, in particular where mass is created from nothing.

Let $\mu_0 \in \measpL(\Omega)$, $\mu_1 \in \measp(\Omega)$. Let $\pi$ be a corresponding minimizer for $\HK(\mu_0,\mu_1)$ in \eqref{eq:HKSoftMarginal} and let $(\rho,\omega,\zeta)$ be the corresponding minimizers of~\eqref{eq:HKBB} that were constructed via Proposition~\ref{prop:HKGeodesicsSoftMarginal}. Then one has
\begin{align}
	\label{eq:HKTangentFormPre}
	\HK(\mu_0,\mu_1)^2 & = \int_0^1 \int_\Omega \left[
		\left\|\RadNik{\omega_t}{\rho_t}\right\|^2 + \tfrac{1}{4} \left(\RadNik{\zeta_t}{\rho_t}\right)^2
		\right]\diff \rho_t \, \diff t.
\end{align}
A subtle difference between \eqref{eq:HKTangentFormPre} and \eqref{eq:W2TangentFormPre} is that in \eqref{eq:HKTangentFormPre} the integrand
\begin{align*}
\int_\Omega \left[ \left\|\RadNik{\omega_t}{\rho_t}\right\|^2 + \tfrac{1}{4} \left(\RadNik{\zeta_t}{\rho_t}\right)^2 \right]\diff \rho_t
\end{align*}
must be handled with particular care for $t \in \{0,1\}$, as one may have that $\RadNik{\zeta_t}{\rho_t}$ diverges in some locations as $t \to 0$ and $1$ where $\rho_t$ vanishes in the limit $t=\{0,1\}$. Thus, we cannot simply rewrite \eqref{eq:HKTangentFormPre} in terms of this integrand at $t=0$, as we have done for $\W_2$ in deriving~\eqref{eq:W2TangentForm} from \eqref{eq:W2TangentFormPre}. The following proposition handles these subtleties.

\begin{proposition}
\label{prop:HKTangentForm}
Let $\mu_0 \in \measpL(\Omega)$, $\mu_1 \in \measp(\Omega)$. Let $\pi$ be the unique minimizer for $\HK(\mu_0,\mu_1)$ in \eqref{eq:HKSoftMarginal} which can be written as $\pi=(\id,T)_\sharp \sigma$ for some measurable $T : \Omega \to \Omega$ and $\sigma \in \measp(\Omega)$. Let $(\rho,\omega,\zeta)$ be corresponding minimizers of \eqref{eq:HKBB} that were constructed via Proposition~\ref{prop:HKGeodesicsSoftMarginal} (and let $(\tilde{\rho},\tilde{\zeta})$ be the corresponding parts of $(\rho,\zeta)$ as given by \eqref{eq:HKGeodesicsSoftMarginalRhoPre} and \eqref{eq:HKGeodesicsSoftMarginalZetaPre}.
Consider the following Lebesgue decompositions of $\mu_0$ and $\mu_1$ with respect to the marginals of $\pi$:
\begin{align*}
	\mu_0 & = u_0 \cdot \sigma + \mu_0^\perp, &
	\mu_1 & = u_1 \cdot T_\sharp \sigma + \mu_1^\perp.
\end{align*}
Set further for $t \in [0,1)$:
\begin{align}
	\label{eq:HKTangentVectors}
	v_t & \assign \RadNik{\omega_t}{\rho_t}, &
	\alpha_t & \assign \RadNik{\tilde{\zeta}_t}{\rho_t} -2(1-t) \RadNik{\mu_0^\perp}{\rho_t}.
\end{align}
Then
\begin{subequations}
\label{eq:HKTangentSimpleForm}
\begin{align}
	\label{eq:HKTangentSimpleFormV}
	v_0(x) & = \begin{cases}
		\frac{T(x)-x}{\|T(x)-x\|} \cdot \sqrt{\frac{u_1(T(x))}{u_0(x)}} \cdot \sin(\|T(x)-x\|) & \tn{$\sigma$-a.e.}, \\
		0 & \tn{$\mu_0^\perp$-a.e.,}
	\end{cases} \\
	\label{eq:HKTangentSimpleFormAlpha}
	\alpha_0(x) & = \begin{cases}
		2 \l \sqrt{\frac{u_1(T(x))}{u_0(x)}} \cdot \cos(\|T(x)-x\|)-1 \r & \tn{$\sigma$-a.e.}, \\
		-2 & \tn{$\mu_0^\perp$-a.e..}
	\end{cases}
\end{align}
\end{subequations}
with the convention $v_0(x)=0$ if $T(x)=x$, and
\begin{align}
	\label{eq:HKTangentForm}
	\HK(\mu_0,\mu_1)^2 & = \int_\Omega \left[\|v_0\|^2 + \tfrac{1}{4} (\alpha_0)^2 \right] \,\diff \mu_0 + \|\mu_1^\perp\|\,.
\end{align}
\end{proposition}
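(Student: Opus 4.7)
The plan is to split the contribution of $\mu_0$ into its two mutually singular pieces $u_0\sigma$ and $\mu_0^\perp$ and evaluate $v_0$, $\alpha_0$ on each piece separately, then read off the cost identity from Corollary \ref{cor:HKDecompositionForm}. The key observation is that the pushforward formulas \eqref{eq:HKGeodesicsSoftMarginalSimpleRho}--\eqref{eq:HKGeodesicsSoftMarginalSimpleZeta} collapse at $t=0$ because $\hat X(x,T(x);0)=x$ and $\hat M(x,T(x);0)=u_0(x)$, so that $\tilde\rho_0 = u_0\cdot\sigma$, $\omega_0 = \partial_t \hat X(\cdot,T(\cdot);0)\cdot u_0\cdot\sigma$, and $\tilde\zeta_0 = \partial_t \hat M(\cdot,T(\cdot);0)\cdot\sigma$. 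Combined with $\rho_0=\mu_0 = u_0\sigma+\mu_0^\perp$ and the mutual singularity $u_0\sigma \perp \mu_0^\perp$, these identities let one compute $\RadNik{\omega_0}{\rho_0}$ and $\RadNik{\tilde\zeta_0}{\rho_0}$ pointwise. On the $u_0\sigma$-part, the densities are $\partial_t \hat X(x,T(x);0)$ and $\partial_t \hat M(x,T(x);0)/u_0(x) = \dot m(0)/m(0)$, which are exactly the Dirac-to-Dirac derivatives from \eqref{eq:HKDiracGeodesicZeroDerivatives} after plugging $m_0=u_0(x)$, $m_1=u_1(T(x))$. On the $\mu_0^\perp$-part, $\omega$ and $\tilde\zeta$ vanish, so $v_0=0$ and the correction term $-2(1-t)\RadNik{\mu_0^\perp}{\rho_t}$ at $t=0$ gives $\alpha_0 = -2$. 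This establishes \eqref{eq:HKTangentSimpleFormV}--\eqref{eq:HKTangentSimpleFormAlpha}.

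For the norm identity \eqref{eq:HKTangentForm}, the plan is to start from Corollary \ref{cor:HKDecompositionForm}, which, thanks to $\pi=(\id,T)_\sharp\sigma$, reads
\begin{align*}
\HK(\mu_0,\mu_1)^2 = \int_\Omega \HK(u_0(x)\delta_x, u_1(T(x))\delta_{T(x)})^2\,\diff\sigma(x) + \|\mu_0^\perp\| + \|\mu_1^\perp\|.
\end{align*}
Since $\pi$-a.e. pair satisfies $\|x_0-x_1\|<\pi/2$ (otherwise $\cHK=+\infty$), one may invoke \eqref{eq:HKDiracGeodesicXMCostConst} at $t=0$ (valid because $u_0>0$ $\sigma$-a.e., as follows from $u_0\cdot\sigma$ being the ac part) to rewrite each integrand as $[\|v_0(x)\|^2+\tfrac14\alpha_0(x)^2]\cdot u_0(x)$. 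The factor $u_0(x)$ absorbs $\diff\sigma$ into $\diff(u_0\sigma)$, yielding the integral of $\|v_0\|^2+\tfrac14\alpha_0^2$ against the ac part of $\mu_0$. The $\|\mu_0^\perp\|$ term is in turn absorbed into the integral against $\mu_0$, since on the $\mu_0^\perp$-part $\|v_0\|^2+\tfrac14\alpha_0^2 = 0 + \tfrac14(-2)^2 = 1$. What remains is exactly $\|\mu_1^\perp\|$, as claimed.

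\textbf{Main obstacle.} The delicate step is justifying the pointwise computation of the Radon--Nikodym derivatives at the endpoint $t=0$: one must verify that the pushforward $\hat X(\cdot,T(\cdot);0)_\sharp(\hat M(\cdot,T(\cdot);0)\cdot\sigma)$ really does collapse to $u_0\cdot\sigma$ (this uses Proposition \ref{prop:HKMonge} to ensure $\pi$ is concentrated on the graph of $T$, and the fact that $u_0$ is a genuine Radon--Nikodym density), and that the mutual singularity $u_0\sigma\perp\mu_0^\perp$ lets one evaluate densities against $\rho_0=\mu_0$ simply by restricting to the respective supports. One must also confirm that $\|T(x)-x\|<\pi/2$ for $\sigma$-a.e.\ $x$ (via finiteness of $\cHK$ on $\spt \pi$), so that the smooth Dirac-to-Dirac formulas of Proposition \ref{prop:HKDiracGeodesics} apply and the convention $v_0(x)=0$ when $T(x)=x$ handles the remaining degenerate points. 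Everything else is a direct substitution using the identities already established in Proposition \ref{prop:HKDiracGeodesics} and Corollary \ref{cor:HKDecompositionForm}.
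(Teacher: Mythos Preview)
Your proposal is correct and follows essentially the same route as the paper's own proof: evaluate the pushforward formulas \eqref{eq:HKGeodesicsSoftMarginalSimpleRho}--\eqref{eq:HKGeodesicsSoftMarginalSimpleZeta} at $t=0$ using $\hat X(\cdot,T(\cdot);0)=\id$ and $\hat M(\cdot,T(\cdot);0)=u_0$, read off the Radon--Nikodym derivatives on the mutually singular pieces of $\mu_0$, substitute \eqref{eq:HKDiracGeodesicZeroDerivatives}, and then derive \eqref{eq:HKTangentForm} from Corollary~\ref{cor:HKDecompositionForm} together with \eqref{eq:HKDiracGeodesicXMCostConst} at $t=0$. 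One small correction: the fact that $u_0>0$ $\sigma$-a.e.\ does not follow merely from $u_0\sigma$ being the absolutely continuous part of $\mu_0$ with respect to $\sigma$; rather, it follows from $\sigma\ll\mu_0$ (Lemma~\ref{lem:SoftCouplingProperties}\,\eqref{item:SemiCouplingPropertiesMargDom}), which is exactly how the paper argues.
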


Here $v_t$ describes the spatial movement of mass particles, $\alpha_t$ describes the change of mass of moving particles and of those that disappear entirely at $t=1$. And $\mu_1^\perp$ describes the mass particles that are created from nothing.

\begin{proof}
	Uniqueness and the form $\pi = (\id,T)_\sharp \sigma$ of the optimal coupling follows from Proposition~\ref{prop:HKMonge} which implies
	\begin{align*}
		\proj_{0\sharp} \pi & = \sigma, &
		\proj_{1\sharp} \pi & = T_\sharp \sigma.
	\end{align*}
	First, we evaluate 	\eqref{eq:HKTangentVectors} explicitly at $t=0$. For this we plug $\pi=(\id,T)_\sharp \sigma$ into Proposition~\ref{prop:HKGeodesicsSoftMarginal}. One obtains:
	\begin{align}
		\tilde{\rho}_0 & = u_0 \cdot \sigma, \nonumber \\
		\rho_0 & = \mu_0, \nonumber \\
		\omega_0 & = \partial_t X(\cdot,u_0(\cdot),T(\cdot),u_1(T(\cdot));t)\big|_{t=0} \cdot u_0 \cdot \sigma, \nonumber \\
		\tilde{\zeta}_0 & = \partial_t M(\cdot,u_0(\cdot),T(\cdot),u_1(T(\cdot));t)\big|_{t=0} \cdot \sigma \nonumber
	\end{align}
	and thus
	\begin{subequations}
	\label{eq:HKTangentVectorsExplicit}
	\begin{align}
		v_0 & = \begin{cases}
			\partial_t X(\cdot,u_0(\cdot),T(\cdot),u_1(T(\cdot));t)\big|_{t=0} & \tn{$\sigma$-a.e.,} \\
			0 & \tn{$\mu_0^\perp$-a.e.,}
			\end{cases}
			\label{eq:HKTangentVectorsExplicitV} \\
		\alpha_0 & = \begin{cases}
			\frac{\partial_t M(\cdot,u_0(\cdot),T(\cdot),u_1(T(\cdot));t)\big|_{t=0}}{u_0} & \tn{$\sigma$-a.e.}, \\
			-2 & \tn{$\mu_0^\perp$-a.e.}
			\end{cases}
			\label{eq:HKTangentVectorsExplicitAlpha}
	\end{align}
	\end{subequations}
	which become \eqref{eq:HKTangentSimpleForm} with \eqref{eq:HKDiracGeodesicZeroDerivatives}.
	
	By Corollary \ref{cor:HKDecompositionForm} one has
	\begin{align*}
		\HK(\mu_0,\mu_1)^2 & = \int_\Omega \HK(u_0(x) \cdot \delta_x, u_1(T(x)) \cdot \delta_{T(x)})^2\,\diff \sigma(x)
			+ \|\mu_0^\perp\| + \|\mu_1^\perp\|. \\
	\intertext{According to Lemma \ref{lem:SoftCouplingProperties} \eqref{item:SemiCouplingPropertiesMargDom} we find $\sigma \ll \mu_0$ and thus $u_0>0$ $\sigma$-almost everywhere. Then, with \eqref{eq:HKDiracGeodesicXMCostConst}, extended to $t=0$ (due to $u_0>0$ $\sigma$-a.e.), one obtains}
		\HK(\mu_0,\mu_1)^2 & = \int_\Omega \left[
			\left\|\partial_t X(\cdot,u_0(\cdot),T(\cdot),u_1(T(\cdot));0)\right\|^2
			+ \tfrac14 \l \frac{\partial_t M(\cdot,u_0(\cdot),T(\cdot),u_1(T(\cdot));0)}{u_0} \r^2 \right] \cdot u_0\,\diff \sigma \\
			& \qquad+ \int_\Omega \tfrac14 ({-2})^2\, \diff \mu_0^\perp + \|\mu_1^\perp\|
	\intertext{and with \eqref{eq:HKTangentVectorsExplicit}}
	\HK(\mu_0,\mu_1)^2 & = \int_\Omega \left[ \|v_0\|^2 + \tfrac14 \alpha_0^2 \right] \diff \mu_0 + \|\mu_1^\perp\|\,.
	\end{align*}
\end{proof}

\begin{remark}
\label{rem:AlphaAsymmetry}
Note that we could have decided to set $\alpha_0(x) \assign 0$ $\mu_0^\perp$-a.e.~in \eqref{eq:HKTangentSimpleFormAlpha} and instead include $\|\mu_0^\perp\|$ in \eqref{eq:HKTangentForm} for a more symmetric treatment of $\mu_0^\perp$ and $\mu_1^\perp$. This would however require us to add $\mu_0^\perp$ as an additional component in the definition of the logarithmic map \eqref{eq:HKLog} below. Also, when some parts of $\mu_1$ tend to zero, with the current definition the corresponding values of $\alpha_0$ continuously tend to $-2$, whereas with the alternative definition, they would approach $-2$ but in the limit jump to $0$, and the $\mu_0^\perp$-component would also be discontinuous.
While the logarithmic map \eqref{eq:HKLog} is not continuous, with the above convention, it is at least `less discontinuous'. Discontinuity cannot be avoided entirely, due to the cut-locus of $\HK$ geodesics at the relative distance $\pi/2$ (cf.~Proposition \ref{prop:HKDiracGeodesics}).
\end{remark}

\begin{remark}
\label{rem:HKEuc}
Loosely speaking, by Lemma~\ref{lem:SoftCouplingProperties} \eqref{item:SemiCouplingPropertiesMargPerp} the mass particles of measures $\mu_0$ and $\mu_1^\perp$ are at least at distance $\pi/2$ to each other.
Therefore, if we assume that $\mu_1$ is concentrated on $\spt \mu_0 + B_{\pi/2}$ (i.e.~the Minkowski sum of $\spt \mu_0$ and the centered open ball of radius $\pi/2$) then it follows that $\mu_1^\perp=0$, and in particular, we may write
\[ \HK(\mu_0,\mu_1)^2 = \int_\Omega \left[ \| v_0\|^2 + \frac14 (\alpha_0)^2 \right] \, \diff \mu_0. \]
This holds of course if $\Omega \subset \spt \mu_0 + B_{\pi/2}$, or, when for a dataset of samples $\{\mu_1,\ldots,\mu_n\}$, $\mu_0$ is chosen as linear mean or $\HK$-barycenter (see \cite{FriMatSch19} for details).
\end{remark}

\begin{remark}
\label{rem:HKBarycentric}
In order to treat the case where transport maps $T$ do not exist then one can, as in Section~\ref{sec:W2Lin}, approximate the transport plan $\pi$ by a plan induced by a map, i.e. $\pi \approx (\id,T)_{\sharp} \sigma$ (where we keep $\sigma = P_{0\sharp}\pi$), using barycentric projections described in Section~\ref{sec:W2Lin}.
Notice that $u_1$ and $\mu_1^\perp$ are defined by the Lebesgue decomposition $\mu_1 = u_1\cdot T_{\sharp} \sigma + \mu_1^\perp$.
However, since in equations~\eqref{eq:HKTangentSimpleForm} one only needs $u_1\circ T$ (and in particular does not need to know $u_1$), then rather than define $u_1 = \RadNik{\mu_1}{T_{\sharp} \sigma}$ in numerical implementations we propose to approximate $u_1\circ T$ directly, i.e.
\[ u_1(T(x_0)) \approx \int_\Omega \RadNik{\mu_1}{P_{1\sharp}\pi}(x_1)\,\diff \pi_{x_0}(x_1) \]
where $\{\pi_{x_0}\}_{x_0\in\Omega}\subset\prob(\Omega)$ is the disintegration of $\pi$ with respect to $\sigma$, i.e. $\int_{\Omega^2} \phi(x_0,x_1) \, \diff \pi(x_0,x_1) = \int_\Omega \int_\Omega \phi(x_0,x_1) \, \diff \pi_{x_0}(x_1) \, \diff \sigma (x_0)$ for all measurable functions $\phi:\Omega^2\to [0,\infty]$.
\end{remark}

\subsection{Logarithmic map and Riemannian inner product}
\label{sec:HKLog}
Motivated by Proposition \ref{prop:HKTangentForm} we can now introduce a logarithmic map and an inner product for the $\HK$ distance, in analogy to Section \ref{sec:W2Lin}.
\begin{definition}
	\label{def:HKLog}
	Let $\mu_0 \in \measpL(\Omega)$, $\mu_1 \in \measp(\Omega)$ and let $(v_t,\alpha_t,\mu_1^\perp)$ be given as in Proposition~\ref{prop:HKTangentForm}.
	Then we define the Logarithmic map for $\HK$ at support point $\mu_0$ for the measure $\mu_1$ as
\begin{equation}
	\label{eq:HKLog}
	\Log_{\HK}(\mu_0;\mu_1) \assign (v_0,\alpha_0,\sqrt{\mu_1^\perp}),
\end{equation}
see Remark \ref{rem:SquareRoot} for the square root.
Given another measure $\tilde{\mu}_1 \in \measp(\Omega)$ with $\Log_{\HK}(\mu_0;\tilde{\mu}_1) \assign (\tilde{v}_0,\tilde{\alpha}_0,\sqrt{\tilde{\mu}_1^\perp})$ we define the corresponding inner product as
	\begin{align}
		\label{eq:HKInnerProduct}
		g_{\HK}\big(\mu_0; (v_0,\alpha_0,\sqrt{\mu_1^\perp}), (\tilde{v}_0,\tilde{\alpha}_0,\sqrt{\tilde{\mu}_1^\perp}) \big)
		\assign \int_\Omega \left[ \la v_0,\tilde{v}_0 \ra + \tfrac{1}{4} \alpha_0\,\tilde{\alpha}_0 \right] \, \diff \mu_0
		+ \int_\Omega \sqrt{ \RadNik{\mu_1^\perp}{\lambda}\,\RadNik{\tilde{\mu}_1^\perp}{\lambda}} \,\diff \lambda
	\end{align}
	where $\lambda$ is some measure in $\measp(\Omega)$ with $\mu_1^\perp, \tilde{\mu}_1^\perp \ll \lambda$.\end{definition}

\begin{remark}[Square root of measure]
	\label{rem:SquareRoot}
	For general non-negative measures there is no reasonable definition of a square root.
	The root in the third component of \eqref{eq:HKLog} is to be understood in a purely formal sense.
	It is never evaluated directly, but only in expressions where it becomes meaningful.
	For instance, the integral in the last term of \eqref{eq:HKInnerProduct} is well-defined and does not depend on the choice of $\lambda$ since the function $\R_+^2 \ni (s,t) \mapsto \sqrt{s \cdot t}$ is jointly positively 1-homogeneous. We can therefore interpret this last term as a bilinear form between the square roots of two non-negative measures.
	Likewise, let $\rho$ be the mid-point of a constant speed geodesic between $\mu_0$ and $\mu_1$. Then we should have that $\Log_{\HK}(\mu_0;\rho)=\tfrac12 \cdot \Log_{\HK}(\mu_0;\mu_1)$. This becomes true with the square root (see Propositions \ref{prop:LogarithmicMapGeodesics} and \ref{prop:HKExp}).
	The third term in \eqref{eq:HKInnerProduct} is related to the Hellinger distance, which can be interpreted as the `$L^2$-distance between the measure square roots' and where the same notion of measure square root is used.
\end{remark}

Uniqueness of $(v_0,\alpha_0,\sqrt{\mu_1^\perp})$ is implied by the uniqueness of the optimal coupling $\pi$ from which they are constructed, which follows from Proposition~\ref{prop:HKMonge}. Hence, $\Log_{\HK}(\mu;\cdot)$ is well-defined. 
Referring to \eqref{eq:HKLog} and \eqref{eq:HKInnerProduct} as logarithmic map and inner product is a slight abuse of notation, since the third component of the inner product is merely defined on the cone of non-negative measures and thus lacks the full vector space structure.

Now, analogous to \eqref{eq:W2Euc}, by Proposition~\ref{prop:HKTangentForm} we have
\begin{equation} \label{eq:HKEuc}
\HK(\mu_0,\mu_1)^2 = g_{\HK}(\mu_0;\Log_{\HK}(\mu_0;\mu_1),\Log_{\HK}(\mu_0;\mu_1)).
\end{equation}
And so, in analogy to \eqref{eq:WLinMonge} we use this to linearize $\HK$ around the support point $\mu_0$.
For $\mu_0\in \measpL(\Omega)$, $\mu_1, \mu_2 \in \measp(\Omega)$ we set
\begin{align*}
\HKLin(\mu_0;\mu_1,\mu_2)^2 & \assign g_{\HK}\l \mu_0;\Log_{\HK}(\mu_0;\mu_1)-\Log_{\HK}(\mu_0;\mu_2),\Log_{\HK}(\mu_0;\mu_1)-\Log_{\HK}(\mu_0;\mu_2) \r. 
\end{align*}
We call $\HKLin$ the linear Hellinger--Kantorovich distance.
As in the Wasserstein-2 case we can see the linear Hellinger--Kantorovich distance as a distance between the formal logarithmic maps in an (almost) Euclidean space.
Indeed,
\begin{multline}
\HKLin(\mu_0;\mu_1,\mu_2)^2 = \lda \Log_{\HK}(\mu_0;\mu_1)_1 - \Log_{\HK}(\mu_0;\mu_2)_1 \rda^2_{\Lp{2}(\mu_0)} \\
 + \frac14 \lda \Log_{\HK}(\mu_0;\mu_1)_2 - \Log_{\HK}(\mu_0;\mu_2)_2 \rda^2_{\Lp{2}(\mu_0)}
 + \left\| \Log_{\HK}(\mu_0;\mu_1)_3 - \Log_{\HK}(\mu_0;\mu_2)_3 \right\|^2_{\tn{Hell}}
\end{multline}
where by $\|\cdot\|_{\tn{Hell}}$ we denote the Hellinger distance over measure square roots (see Remark \ref{rem:SquareRoot}).
And thus the linear Hellinger--Kantorovich distance can be embedded in the space $\Lp{2}(\mu_0;\R^d)\times \Lp{2}(\mu_0;\R)\times \sqrt{\measp(\Omega)}$ where the third component is the cone of square roots of non-negative measures, equipped with the Hellinger metric.
In light of Remark~\ref{rem:HKEuc}, when $\mu_0$ has sufficiently wide support on $\Omega$ then the third component is always zero and the embedding can be made into the Euclidean space $\Lp{2}(\mu_0;\R^d) \times\Lp{2}(\mu_0;\R)$ where $g_{\HK}(\mu_0;\cdot,\cdot)$ is an inner product.

The next proposition verifies that the logarithmic map and constant speed geodesics are connected in the expected way.

\begin{proposition}[Logarithmic map and geodesics]
	\label{prop:LogarithmicMapGeodesics}
	Let $\mu_0\in\measpL(\Omega)$, $\mu_1\in\measp(\Omega)$ and let $\Log_{\HK}(\mu_0;\mu_1)=(v_0,\alpha_0,\sqrt{\mu_1^\perp})$, where $(v_0,\alpha_0,\mu_1^\perp)$ are given by Proposition~\ref{prop:HKTangentForm}.
	Let $\rho$ be constructed via Proposition \ref{prop:HKGeodesicsSoftMarginal}, equation \eqref{eq:HKGeodesicsSoftMarginalRho}, and $\tau\in [0,1]$.
	Then,
	\begin{align*}
		\Log_{\HK}(\mu;\rho_\tau) = (\tau \cdot v_0,\tau \cdot \alpha_0,\tau \cdot \sqrt{\mu_1^\perp}).
	\end{align*}
\end{proposition}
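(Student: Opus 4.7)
My plan is to exhibit the reparametrized sub-curve $s \mapsto \rho_{s\tau}$ as the constant-speed $\HK$-geodesic from $\mu_0$ to $\rho_\tau$ produced by Proposition~\ref{prop:HKGeodesicsSoftMarginal} from a specific optimal soft-marginal coupling $\pi^\tau$, and then to read off $\Log_{\HK}(\mu_0;\rho_\tau)$ component-wise via Proposition~\ref{prop:HKTangentForm}. By Proposition~\ref{prop:HKBasic} the sub-curve is already a constant-speed $\HK$-geodesic with length $\tau \cdot \HK(\mu_0,\mu_1)$, which pins down $\HK(\mu_0,\rho_\tau) = \tau\,\HK(\mu_0,\mu_1)$.

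For the candidate coupling I take
\[ \pi^\tau \assign (\id, S_\tau)_\sharp (c_\tau \cdot \sigma) + (1-\tau) \cdot (\id, \id)_\sharp \mu_0^\perp, \]
where $S_\tau(x) \assign X(x, u_0(x), T(x), u_1(T(x)); \tau)$ is the time-$\tau$ Dirac position and $c_\tau(x) \assign \sqrt{u_0(x) \cdot \hat M(x, T(x); \tau)} \cos\|S_\tau(x) - x\|$ is the optimal Dirac-to-Dirac coupling mass from Lemma~\ref{lem:HKKLInequality}; the second summand is a pure self-coupling that shrinks $\mu_0^\perp$ to $(1-\tau)^2 \mu_0^\perp$ with no spatial motion. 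Using the $\C$-embedding of Remark~\ref{rem:HKDiracGeodesics}, each Dirac sub-geodesic from $(x, u_0(x))$ to $(S_\tau(x), \hat M(x,T(x);\tau))$ is obtained by dividing the original straight-line in $\C$ by $\sqrt{c_\tau(x)}$ and reparametrizing to $[0,1]$, which verifies that the transport part of the curve assembled from $\pi^\tau$ via Proposition~\ref{prop:HKGeodesicsSoftMarginal} equals $\tilde\rho_{s\tau}$. The self-coupling contributes $(1 - s\tau)^2 \mu_0^\perp$ at time $s$, matching the $\mu_0^\perp$-piece of $\rho_{s\tau}$, and the residue $\tau^2 \mu_1^\perp$ of $\rho_\tau$ is at distance $\geq \pi/2$ from the support of $\proj_{1\sharp}\pi^\tau$ (by Lemma~\ref{lem:SoftCouplingProperties}\eqref{item:SemiCouplingPropertiesMargPerp} together with $\|x - S_\tau(x)\| < \pi/2$), and hence singular. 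The resulting curve is therefore $\rho_{s\tau}$, so $\pi^\tau$ is optimal and, by Proposition~\ref{prop:HKMonge}, the unique Monge-type optimizer.

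To finish, I apply Proposition~\ref{prop:HKTangentForm} to $\pi^\tau$. On $\spt \sigma$ the Monge map is $T^\tau = S_\tau$ with $u_0^\tau = u_0/c_\tau$ and $u_1^\tau \circ T^\tau = \hat M(\cdot, T(\cdot); \tau)/c_\tau$, and a short computation in $\C$ (writing $a_0 = \sqrt{u_0}$, $a_1 = \sqrt{u_1(T)}\exp(i\|T{-}x\|)$, $a(\tau) = (1-\tau)a_0 + \tau a_1$) gives $\sqrt{u_1^\tau/u_0^\tau}\,\sin\|S_\tau - x\| = \im a(\tau)/a_0 = \tau\,\sqrt{u_1/u_0}\sin\|T - x\|$ and $\sqrt{u_1^\tau/u_0^\tau}\cos\|S_\tau - x\| - 1 = \re a(\tau)/a_0 - 1 = \tau(\sqrt{u_1/u_0}\cos\|T - x\| - 1)$; plugging into~\eqref{eq:HKTangentSimpleForm} yields $v_0^\tau = \tau v_0$ and $\alpha_0^\tau = \tau \alpha_0$ on $\spt\sigma$. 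On $\spt \mu_0^\perp$ the Monge map is $T^\tau = \id$ with $u_0^\tau = 1/(1-\tau)$ and $u_1^\tau = 1-\tau$, so~\eqref{eq:HKTangentSimpleForm} delivers $v_0^\tau = 0$ and $\alpha_0^\tau = 2((1-\tau) - 1) = -2\tau = \tau \cdot (-2)$, matching $\tau \alpha_0$ there. Since $(\rho_\tau)^{\perp,\tau} = \tau^2 \mu_1^\perp$, its formal square root in the sense of Remark~\ref{rem:SquareRoot} is $\tau \sqrt{\mu_1^\perp}$, and combining the three components proves the claim. The main obstacle is the second step: verifying via the $\C$-picture that $\pi^\tau$ really reproduces $\rho_{s\tau}$ under the formulas of Proposition~\ref{prop:HKGeodesicsSoftMarginal}, which requires careful tracking of the densities $u_i^\tau$ inherited from the push-forward of $c_\tau \cdot \sigma$ and of the interaction between the transport and teleport pieces.
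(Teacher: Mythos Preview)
Your approach is essentially identical to the paper's: both construct exactly the same candidate coupling
\[
  \pi^\tau = (\id, S_\tau)_\sharp (c_\tau \cdot \sigma) + (1-\tau)\,(\id,\id)_\sharp \mu_0^\perp
\]
(the paper writes $T^\tau$, $W^\tau$, $\sigma^\tau$ for your $S_\tau$, $c_\tau$, $c_\tau\sigma$), compute the same Lebesgue-decomposition densities $u_0^\tau$, $u_1^\tau\circ T^\tau$ on the $\sigma$- and $\mu_0^\perp$-parts, identify $\rho_\tau^\perp = \tau^2\mu_1^\perp$, and read off the tangent data. Your final $\C$-picture computation of $v_0^\tau$ and $\alpha_0^\tau$ via real and imaginary parts of $a(\tau)/a_0$ is more explicit than the paper's, which instead verifies that the Prop.~\ref{prop:HKGeodesicsSoftMarginal} geodesic from $\pi^\tau$ agrees with the rescaled one and then invokes the time-derivative formulas \eqref{eq:HKTangentVectors} directly.

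There is one step where your logic is not yet airtight. You write ``the resulting curve is therefore $\rho_{s\tau}$, so $\pi^\tau$ is optimal'': but knowing that the curve assembled from $\pi^\tau$ via the formulas of Prop.~\ref{prop:HKGeodesicsSoftMarginal} happens to be a constant-speed geodesic only gives $J_{\HK}(\text{curve}) = \HK(\mu_0,\rho_\tau)^2$, and the chain in the proof of Prop.~\ref{prop:HKGeodesicsSoftMarginal} yields $J_{\HK}(\text{curve}) \leq \JHKSM(\pi^\tau)$ --- both inequalities point the same way, so optimality of $\pi^\tau$ does not follow. The paper handles this by evaluating $\JHKSM(\pi^\tau)$ directly (using Lemmas~\ref{lem:JensenPushfwd}, \ref{lem:HKKLInequality}, Corollary~\ref{cor:HKDecompositionForm}, and sub-additivity of $\KL$) and showing it equals $\tau^2\HK(\mu_0,\mu_1)^2$. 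You have the ingredients to close this gap your way: since you chose $c_\tau$ precisely as the equality case of Lemma~\ref{lem:HKKLInequality} (indeed $\cos\|x-S_\tau(x)\|\sqrt{u_0^\tau\,u_1^\tau\circ S_\tau} = 1$ by your formulas), the Lemma~\ref{lem:HKKLInequality} inequality in \eqref{eq:HKGeodesicConstructionProofJDecompA} is an equality for $\pi^\tau$, and together with equality in the Jensen step for your Monge-type $\pi^\tau$ this forces $\JHKSM(\pi^\tau) = J_{\HK}(\text{curve})$. State this explicitly and the argument is complete.
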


\begin{proof}
	The intuition behind this result is simple.
	Let $(\rho_t,\omega_t,\zeta_t)$ be the constant speed geodesic between $\mu_0$ and $\mu_1$ constructed via Proposition~\ref{prop:HKGeodesicsSoftMarginal}. Then it is easy to show that the rescaled measures $(\rho^\tau,\omega^\tau,\zeta^\tau)$ given by
	\begin{align}
		\label{eq:LogMapScalingRhoScaled}
		\rho^\tau_t & \assign \rho_{\tau \cdot t}, &
		\omega^\tau_t & \assign \tau \cdot \omega_{\tau \cdot t}, &
		\zeta^\tau_t & \assign \tau \cdot \zeta_{\tau \cdot t}
	\end{align}
	are a constant speed geodesic between $\mu_0$ and $\rho_\tau$.
	Indeed, by explicit computation one finds that $(\rho^\tau,\omega^\tau,\zeta^\tau) \in \CES(\mu_0,\rho_\tau)$. 	Using the constant speed property of $(\rho,\omega,\zeta)$, cf.~Proposition \ref{prop:HKBasic}, one obtains that
	\begin{align*}
	\int_{\Omega} \left(
				\|\RadNik{\omega^\tau_t}{\rho^\tau_t}\|^2 + \tfrac{1}{4} (\RadNik{\zeta^\tau_t}{\rho^\tau_t})^2 \right) \diff \rho^\tau_t
	= \tau^2 \cdot
	\int_{\Omega} \left(
				\|\RadNik{\omega_{\tau \cdot t}}{\rho_{\tau \cdot t}}\|^2 + \tfrac{1}{4} (\RadNik{\zeta_{\tau \cdot t}}{\rho_{\tau \cdot t}})^2 \right) \diff \rho_{\tau \cdot t}
	= \tau^2 \cdot \HK(\mu_0,\mu_1)^2
	\end{align*}
	for Lebesgue almost all $t \in (0,1)$.
	Since $\HK(\mu_0,\rho_\tau) = \tau \cdot \HK(\mu_0,\mu_1)$ by construction ($\rho_\tau$ was picked at time $\tau$ along a constant-speed geodesic), $(\rho^\tau,\omega^\tau,\zeta^\tau)$ must be a constant speed geodesic between $\mu_0$ and $\rho_\tau$.
	Plugging $(\rho^\tau,\omega^\tau,\zeta^\tau)$ into \eqref{eq:HKTangentVectors} one quickly obtains the desired result.

	However, by the above arguments we merely know that $(\rho^\tau,\omega^\tau,\zeta^\tau)$ describe \emph{some} constant speed geodesic between $\mu_0$ and $\rho_\tau$.
	While we conjecture that it is unique in this case (due to $\mu_0 \ll \Lebesgue$), because of the cut-locus behaviour of $\HK$ at the distance $\pi/2$ (cf.~Proposition \ref{prop:HKDiracGeodesics}) this uniqueness has not yet been established.
	Therefore, to make this proof fully rigorous we must make sure that the above geodesic coincides with the one constructed via Proposition \ref{prop:HKGeodesicsSoftMarginal}.
	The computations are lengthy and somewhat tedious, but essentially standard. 	
	We give a brief sketch.
	Let
	\begin{align*}
		T^\tau & \assign X(\cdot,u_0(\cdot), T(\cdot), u_1(T(\cdot)); \tau), &
		U^\tau & \assign M(\cdot,u_0(\cdot), T(\cdot), u_1(T(\cdot)); \tau).
	\end{align*}
	This gives the intermediate position and relative mass of particles in $\pi$ on the constant speed geodesic between $\mu_0$ and $\mu_1$ at time $\tau$.
	Then by Proposition \ref{prop:HKGeodesicsSoftMarginal}, $\rho_\tau = T^\tau_\sharp (U^\tau \cdot \sigma) + (1-\tau)^2 \cdot \mu_0^\perp + \tau^2 \cdot \mu_1^\perp$.
	Further, set
	\begin{align*}
		W^\tau & \assign \sqrt{u_0 \cdot U^\tau} \cdot \cos(\|\cdot-T^\tau(\cdot)\|), &
		\sigma^\tau & \assign W^\tau \cdot \sigma, &
		\pi^\tau \assign (\id,T^\tau)_\sharp \sigma^\tau
		 + (1-\tau)\cdot (\id,\id)_\sharp \mu_0^\perp.
	\end{align*}
	By plugging $\pi^\tau$ into \eqref{eq:HKSoftMarginalObjective} one finds that it is optimal for $\HK(\mu_0,\rho_\tau)$ in \eqref{eq:HKSoftMarginalProblem} (to show this one uses Lemmas \ref{lem:JensenPushfwd} and \ref{lem:HKKLInequality}, Corollary \ref{cor:HKDecompositionForm}, and the joint sub-additivity of $\KL(\cdot|\cdot)$ in both arguments).
	Since $\mu_0 \ll \Lebesgue$, $\pi^\tau$ is the unique minimizer (Proposition \ref{prop:HKMonge}).
	As in Proposition \ref{prop:HKGeodesicsSoftMarginal} we then consider the Lebesgue-decompositions of $\mu_0$ and $\rho_\tau$ with respect to the marginals of $\pi^\tau$:
	\begin{align*}
		\mu_0 & = u_0^\tau \cdot \proj_{0\sharp} \pi^\tau + \mu_0^{\tau,\perp}, &
		\rho_\tau & = u_1^\tau \cdot \proj_{1\sharp} \pi^\tau + \rho_\tau^{\perp}.		
	\end{align*}
	One finds (by using Lemma \ref{lem:SoftCouplingProperties} \eqref{item:SemiCouplingPropertiesMargPerp} to show singularity of $\sigma^\tau$ and $T^\tau_\sharp \sigma^\tau$ with respect to $\mu_0^\perp$ and $\mu_1^\perp$)
	\begin{align*}
		u_0^\tau & = \begin{cases}
			\sqrt{\tfrac{u_0}{U^\tau}} \cdot \tfrac{1}{\cos(\|\cdot - T^\tau(\cdot)\|)} & \tn{$\sigma^\tau$-a.e.,} \\
			\tfrac{1}{1-\tau} & \tn{$\mu_0^\perp$-a.e.}, \\
			\end{cases} &
		u_1^\tau \circ T^\tau & = \begin{cases}
			\sqrt{\tfrac{U^\tau}{u_0}} \cdot \tfrac{1}{\cos(\|\cdot - T^\tau(\cdot)\|)} & \tn{$\sigma^\tau$-a.e.,} \\
			1-\tau & \tn{$\mu_0^\perp$-a.e.}, \\
			\end{cases}
			\\
		\mu_0^{\tau,\perp} & = 0, &
		\rho_\tau^{\perp} & = \tau^2 \cdot \mu_1^\perp.
	\end{align*}
	Plugging this into Proposition \ref{prop:HKGeodesicsSoftMarginal}, one obtains
	\begin{align}
		\label{eq:LogMapScalingRhoExplicit}
		\rho^\tau_t & = \hat{X}^\tau(\cdot,T^\tau(\cdot);t)_\sharp [\hat{M}^\tau(\cdot,T^\tau(\cdot);t) \cdot \sigma^\tau]
			+ t^2 \cdot \rho_\tau^\perp
	\end{align}
	where we set $\sigma^\tau$-almost everywhere
	\begin{align*}
		\hat{X}^\tau(\cdot,T^\tau(\cdot);t) & = X(\cdot,u_0^\tau(\cdot),T^\tau(\cdot),u_1^\tau(T^\tau(\cdot));t), &
		\hat{M}^\tau(\cdot,T^\tau(\cdot);t) & = M(\cdot,u_0^\tau(\cdot),T^\tau(\cdot),u_1^\tau(T^\tau(\cdot));t).
	\end{align*}
	One finds similar formulas for $\omega^\tau_t$ and $\zeta^\tau_t$.
	Using that $X$ and $M$ parametrize constant-speed geodesics, and the equalities
	\begin{align*}
		X(x_0,\lambda \, u_0,x_1,\lambda\, u_1;t) & = X(x_0,u_0,x_1,u_1;t), &
		M(x_0,\lambda \, u_0,x_1,\lambda\, u_1;t) & = M(x_0,u_0,x_1,u_1;t) \cdot \lambda,
	\end{align*}
	for $\lambda>0$, one finds that
	\begin{align*}
		\hat{X}^\tau(\cdot,T^\tau(\cdot);t) & = \hat{X}(\cdot,T(\cdot);\tau \cdot t), &
		\hat{M}^\tau(\cdot,T^\tau(\cdot);t) \cdot \sigma^\tau & = \hat{M}(\cdot,T(\cdot);\tau \cdot t) \cdot \sigma
	\end{align*}
	and therefore, that the to definitions of $\rho^\tau_t$ in \eqref{eq:LogMapScalingRhoScaled} and \eqref{eq:LogMapScalingRhoExplicit} agree (and so do the analogous formulas for $\omega^\tau_t$ and $\zeta^\tau_t$).
	Therefore the re-parametrized geodesic is indeed the one induced by the unique minimizer of \eqref{eq:HKSoftMarginalProblem}.
\end{proof}
\vspace{\baselineskip}

\subsection{Exponential map}
\label{sec:HKExp}

In the previous section we showed how to map from the space of non-negative measures to the tangent space at $\mu_0\in \measpL(\Omega)$.
In this section we consider the inverse transformation; the exponential map defined below (with the same abuse of notation as discussed in the context of Remark \ref{rem:SquareRoot} above).

\begin{proposition}[Exponential map]
	\label{prop:HKExp}
	Let $\mu_0 \in \measpL(\Omega)$, $\mu_1 \in \measp(\Omega)$ and let $(v_0,\alpha_0,\sqrt{\mu_1^\perp})=\Log_{\HK}(\mu_0;\mu_1)$.
	Set
	\begin{align*}
		a(x) & \assign \|v_0(x)\|, & b(x) & \assign \frac{\alpha_0(x)}{2}+1,
	\end{align*}
	\begin{align*}
		S \assign \left\{ x \in \Omega \,\middle|\, (v_0(x),\alpha_0(x))=(0,-2)
		\,\Leftrightarrow\, (a(x),b(x))=(0,0) \right\},
	\end{align*}
	and
	\begin{align*}
		q(x) & \assign \sqrt{a(x)^2 + b(x)^2}.
	\end{align*}
	For $x \in \Omega \setminus S$ let $\varphi(x)$ be given as unique solution to the equation
	\begin{align*}
		\begin{pmatrix}
			a(x) \\ b(x)
		\end{pmatrix}
		=
		q(x)
		\cdot
		\begin{pmatrix}
			\sin(\varphi(x)) \\ \cos(\varphi(x))
		\end{pmatrix}
		\qquad \tn{subject to} \qquad \varphi(x) \in [0,\pi/2]
	\end{align*}
	and set
	\begin{align*}
		\hat{T}(x) \assign x+\begin{cases}
			\frac{v_0(x)}{\|v_0(x)\|} \cdot \varphi(x) & \tn{if } v_0(x) \neq 0, \\
			0 & \tn{else.}
			\end{cases}
	\end{align*}
	Then,
	\begin{align}
		\label{eq:HKExp}
		\mu_1 & = \hat{T}_{\sharp} (q^2 \cdot \mu_0 \restr (\Omega \setminus S)) + \mu_1^\perp.
	\end{align}
	We refer to this map, from $(\alpha_0,v_0,\sqrt{\mu_1^\perp})$ to $\mu_1$ as \emph{exponential map}, denoted by
	\begin{align*}
		\mu_1 = \Exp_{\HK}(\mu_0;v_0,\alpha_0,\sqrt{\mu_1^\perp}).
	\end{align*}
	In addition,
	\begin{align*}
		[0,1] \ni t \mapsto \rho_t \assign \Exp_{\HK}(\mu_0;t \cdot v_0, t \cdot \alpha_0, t \cdot \sqrt{\mu_1^\perp})
	\end{align*}
	describes the constant speed geodesic between $\mu_0$ and $\mu_1$ given by Proposition~\ref{prop:HKGeodesicsSoftMarginal}.
\end{proposition}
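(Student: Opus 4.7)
The plan is to substitute the explicit Monge-map formulas for $v_0$ and $\alpha_0$ from Proposition~\ref{prop:HKTangentForm} into the definitions of $a$, $b$, $q$, $\varphi$ and $\hat{T}$, show that this identifies $S$ with the support of $\mu_0^\perp$ and reduces $\hat{T}$ to $T$ on the complement, and then recover~\eqref{eq:HKExp} by a direct push-forward computation. For the geodesic statement I would combine the just-proved exponential identity with Proposition~\ref{prop:LogarithmicMapGeodesics}.

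Writing $\pi=(\id,T)_\sharp \sigma$ with $\sigma=\proj_{0\sharp}\pi$, $\mu_0=u_0\cdot\sigma+\mu_0^\perp$, and setting $r(x)\assign\sqrt{u_1(T(x))/u_0(x)}$, $\theta(x)\assign\|T(x)-x\|$, the formulas~\eqref{eq:HKTangentSimpleForm} become $(a,b)=(r\sin\theta,r\cos\theta)$ on the $\sigma$-component and $(a,b)=(0,0)$ on $\mu_0^\perp$. By Lemma~\ref{lem:SoftCouplingProperties}\eqref{item:SemiCouplingPropertiesMargDom} one has $u_0,u_1\circ T>0$ $\sigma$-a.e., and finiteness of $\cHK$ along the optimal coupling forces $\theta<\pi/2$ $\sigma$-a.e., so $b>0$ on the $\sigma$-component; thus $S$ agrees with $\mu_0^\perp$ up to $\mu_0$-null sets. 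On $\Omega\setminus S$ the polar inversion yields $q=r$ and $\varphi=\theta$, whence $\hat{T}=T$ (the convention covers $T(x)=x$). Then
\begin{align*}
\hat{T}_\sharp\bigl(q^2\cdot\mu_0\restr(\Omega\setminus S)\bigr)=T_\sharp\bigl((u_1\circ T)\cdot\sigma\bigr)=u_1\cdot T_\sharp\sigma,
\end{align*}
and adding $\mu_1^\perp$ reassembles the Lebesgue decomposition $\mu_1=u_1\cdot T_\sharp\sigma+\mu_1^\perp$ from Proposition~\ref{prop:HKTangentForm}, establishing~\eqref{eq:HKExp}.

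For the geodesic assertion I would invoke Proposition~\ref{prop:LogarithmicMapGeodesics}, which gives $\Log_{\HK}(\mu_0;\rho_\tau)=(\tau v_0,\tau\alpha_0,\tau\sqrt{\mu_1^\perp})$, and then apply the first part of the present proposition to the pair $(\mu_0,\rho_\tau)$ to recover the claimed formula. The delicate point is that the exceptional set $S_\tau$ for the rescaled argument no longer contains $\mu_0^\perp$ when $\tau<1$: on $\mu_0^\perp$ one has $a_\tau=0$ but $b_\tau=1-\tau>0$, so a short computation yields $\hat{T}_\tau=\id$ and $q_\tau^2=(1-\tau)^2$ there, which reproduces the $(1-\tau)^2\mu_0^\perp$ summand of~\eqref{eq:HKGeodesicsSoftMarginalRho}; on the $\sigma$-component one reads off $q_\tau^2\,u_0=\hat{M}(\cdot,T(\cdot);\tau)$ and $\hat{T}_\tau=\hat{X}(\cdot,T(\cdot);\tau)$ directly from~\eqref{eq:HKDiracGeodesicPhi}.

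The main obstacle is the bookkeeping around the exceptional set $S$ and the $\mu_0^\perp$-component: the polar inversion defining $\varphi$ has to be well-defined on exactly the right set, and in the geodesic claim the $\tau$-dependence of $S_\tau$ must be tracked carefully so that the rescaled exponential formula matches the geodesic decomposition~\eqref{eq:HKGeodesicsSoftMarginalRho} at every intermediate time.
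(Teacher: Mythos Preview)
Your proposal is correct and follows essentially the same route as the paper: identify $(a,b)=(r\sin\theta,r\cos\theta)$ on the $\sigma$-component via \eqref{eq:HKTangentSimpleForm}, use Lemma~\ref{lem:SoftCouplingProperties}\eqref{item:SemiCouplingPropertiesMargDom} and finiteness of $\cHK$ to get $r>0$ and $\theta<\pi/2$ $\sigma$-a.e., conclude that $S$ coincides (up to a $\mu_0$-null set) with the set where $\mu_0^\perp$ lives, invert the polar relation to obtain $q^2=u_1\circ T/u_0$, $\varphi=\|T-\id\|$, $\hat T=T$, and then push forward using $\sigma=\tfrac{1}{u_0}\,\mu_0\restr(\Omega\setminus S)$. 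The paper's proof is the same computation.

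For the geodesic part your first sentence already matches the paper exactly: apply Proposition~\ref{prop:LogarithmicMapGeodesics} to get $\Log_{\HK}(\mu_0;\rho_\tau)=(\tau v_0,\tau\alpha_0,\tau\sqrt{\mu_1^\perp})$, then apply the first part of the present proposition to the pair $(\mu_0,\rho_\tau)$. That is the complete argument; the subsequent discussion you give about $S_\tau$, $q_\tau$, $\hat T_\tau$ and matching the pieces of \eqref{eq:HKGeodesicsSoftMarginalRho} by hand is a correct consistency check but unnecessary---once you know $(\tau v_0,\tau\alpha_0,\tau\sqrt{\mu_1^\perp})$ \emph{is} the logarithm of $\rho_\tau$, the first part applies verbatim and delivers $\rho_\tau$ directly, with the appropriate $S_\tau$ determined automatically from the rescaled tangent vector.
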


\begin{proof}
	Throughout this proof let $T$, $\sigma$, $\pi$, $u_i$ and $\mu_i^\perp$ be given as in Proposition~\ref{prop:HKTangentForm}.
	
	We start by showing for $\alpha_0$, as given by \eqref{eq:HKTangentSimpleFormAlpha}, that $\alpha_0>-2$ $\sigma$-almost everywhere.
	Indeed, one finds that $\|T-\id\| < \tfrac{\pi}{2}$ $\sigma$-a.e., as the transport cost $\cHK$ is infinite at distance greater or equal $\tfrac{\pi}{2}$ and $(\id,T)_\sharp \sigma$ is optimal for \eqref{eq:HKSoftMarginal}.
	Further, due to Lemma \ref{lem:SoftCouplingProperties} \eqref{item:SemiCouplingPropertiesMargDom} one has $T_\sharp \sigma \ll \mu_1$ and thus $u_1>0$ $T_\sharp \sigma$-a.e., which implies $u_1 \circ T>0$ $\sigma$-almost everywhere.
	This implies that $\sigma(S)=0$.
	Conversely, by construction one has $\alpha_0=-2$ $\mu_0^\perp$-almost everywhere. Together with [$\alpha_0>-2$ on $\Omega \setminus S$] and [$\sigma(S)=0$] this implies that $\mu^\perp_0 = \mu_0 \restr S$.
	
	Plugging now $v_0$ and $\alpha_0$ from \eqref{eq:HKTangentSimpleFormAlpha} into the definitions of $a$, $b$, $q$ and $\varphi$ above, we obtain that
	\begin{align}
		\label{eq:HKExpProofQPhi}
		q^2 & = \frac{u_1 \circ T}{u_0}, & \varphi & = \|T-\id\|
	\end{align}
	$\sigma$-almost everywhere.
	Further, for $\|v_0(x)\|>0$ one has $\frac{v_0}{\|v_0\|} = \frac{T-\id}{\|T-\id\|}$ in \eqref{eq:HKTangentSimpleFormV} and thus also $\hat{T}=T$ $\sigma$-almost everywhere.

	With Lemma \ref{lem:SoftCouplingProperties} \eqref{item:SemiCouplingPropertiesMargDom} one has $\sigma \ll \mu_0$ and thus $u_0>0$ $\sigma$-a.e., which allows us to write $\sigma=\frac{1}{u_0} \cdot (\mu_0-\mu_0^\perp) = \frac{1}{u_0} \cdot \mu_0 \restr (\Omega \setminus S)$.
	With this one obtains
	\begin{align*}
		\mu_1 & = u_1 \cdot T_\sharp \sigma + \mu_1^\perp
		= u_1 \cdot \hat{T}_{\sharp} \sigma + \mu_1^\perp
		= \hat{T}_{\sharp} (u_1 \circ T \cdot \sigma) + \mu_1^\perp
		= \hat{T}_{\sharp} \l \frac{u_1 \circ T}{u_0} \cdot \mu_0 \restr (\Omega \setminus S) \r + \mu_1^\perp
	\end{align*}
	which equals \eqref{eq:HKExp} with \eqref{eq:HKExpProofQPhi}.	

	The second part of the Proposition follows now from Proposition~\ref{prop:LogarithmicMapGeodesics}.
	Indeed, let $\rho_t$ be the point on the constant speed geodesic from $\mu_0$ to $\mu_1$, as given by Proposition~\ref{prop:HKGeodesicsSoftMarginal}. Then by Proposition~\ref{prop:LogarithmicMapGeodesics} one has
	\begin{align*}
		(\hat{v}_0,\hat{\alpha}_0,\hat{\mu}) \assign \Log_{\HK}(\mu_0;\rho_t) = (t \cdot v_0, t \cdot \alpha_0, t \cdot \sqrt{\mu_1^\perp})
	\end{align*}
	and by the first part of this Proposition we know that $\Exp_{\HK}(\mu_0;\hat{v}_0,\hat{\alpha}_0,\hat{\mu}) = \rho_t$.
\end{proof}

\section{Numerical examples}
\label{sec:Numerics}
We now present several numerical examples as illustration for the analytic results of Section \ref{sec:HKLin} and to show the usefulness of the linearized HK metric for data analysis applications, in particular as a refinement of linearized $\W_2$.
Section \ref{sec:NumericsPrel} summarizes the numerical setup. Section \ref{sec:NumericsEllipses} gives an example on simple synthetic images to gain some intuition. Sections \ref{sec:NumericsCells} and \ref{sec:NumericsJets} give examples for the analysis of microscopy images of liver cells and particle collider events, adapted from \cite{OptimalTransportTangent2012} and \cite{Cai_2020}.
Example code is available at \url{https://github.com/bernhard-schmitzer/UnbalancedLOT/}.

\subsection{Preliminaries}
\label{sec:NumericsPrel}
\paragraph{Discretization, numerical approximation of $\HK^2$, logarithmic and exponential map}
In our examples, $\Omega$ is a rectangle in $\R^2$, typically representing the image domain, and the samples $(\mu_i)_{i=1}^n$ are given as sums of Dirac measures at pixel locations.
For numerical approximation of the soft-marginal formulation of $\HK^2$, \eqref{eq:HKSoftMarginal}, we use the unbalanced Sinkhorn algorithm introduced in \cite{ChizatEntropicNumeric2018}, for practical implementation details see  \cite{schmitzer2019stabilized}.
The entropic regularization parameter can be chosen sufficiently small so that the entropic blur is negligible compared to the discretization errors.
On the approximate optimal transport plan $\pi$ (which is typically non-deterministic, due to discretization) we then use the barycentric projection as described in Remark \ref{rem:HKBarycentric} and then extract an approximate logarithmic map.

When applying the exponential map at a discrete reference measure $\mu_0$, the result will be a discrete measure where the points of $\mu_0$ are moved to new locations (and their masses are re-scaled). Often, it is desirable to visualize the new measure on a fixed reference grid (e.g.~the same grid that the input samples live on). For rasterization we then use bilinear interpolation coefficients to distribute mass to nearby grid points.

\paragraph{Choice of reference measure}
The choice of the reference measure $\mu_0$ is important for a successful analysis. Intuitively, the metric space $(\measp(\Omega),\HK)$ is a (weak) curved Riemannian manifold (similar to $(\prob(\Omega),\W_2)$). Thus, if the reference point $\mu_0$ is very far from the samples $(\mu_i)_{i=1}^n$ (or if the samples are very far from each other) then we expect that the local linearization is a poor approximation.
A natural candidate for $\mu_0$ is the barycenter of the samples. The HK barycenter was recently studied in \cite{HKBarycenters2019,FriMatSch19}. In principle, this can be approximated numerically with the methods from \cite{ChizatEntropicNumeric2018,schmitzer2019stabilized}.
For comparison we also consider the linearized $\W_2$ distance. 
The corresponding barycenter was analyzed in \cite{WassersteinBarycenter}, we use the algorithm described in \cite{benamou2015iterative}.
However, for large numbers of samples $n$ or when samples live on large grids, computing the barycenter could be numerically prohibitive. Thus it is worthwhile to consider alternative choices.
In \cite{OptimalTransportTangent2012} it was proposed to use the linear average of the samples, similarly one could consider the Hellinger mean. Finally, in \cite{Cai_2020} a uniform measure on a Cartesian background-grid was used. We will numerically compare using the linear mean of samples with the uniform measure for choice of reference measure in Section~\ref{sec:NumericsJets}.	
In all these cases it is ensured that $\mu_i^\perp=0$, $i=1,\ldots,n$ (see Remark \ref{rem:HKEuc}).

\subsection{Deforming and resizing ellipses}
\label{sec:NumericsEllipses}

\begin{figure}
	\centering
	\includegraphics[width=\textwidth]{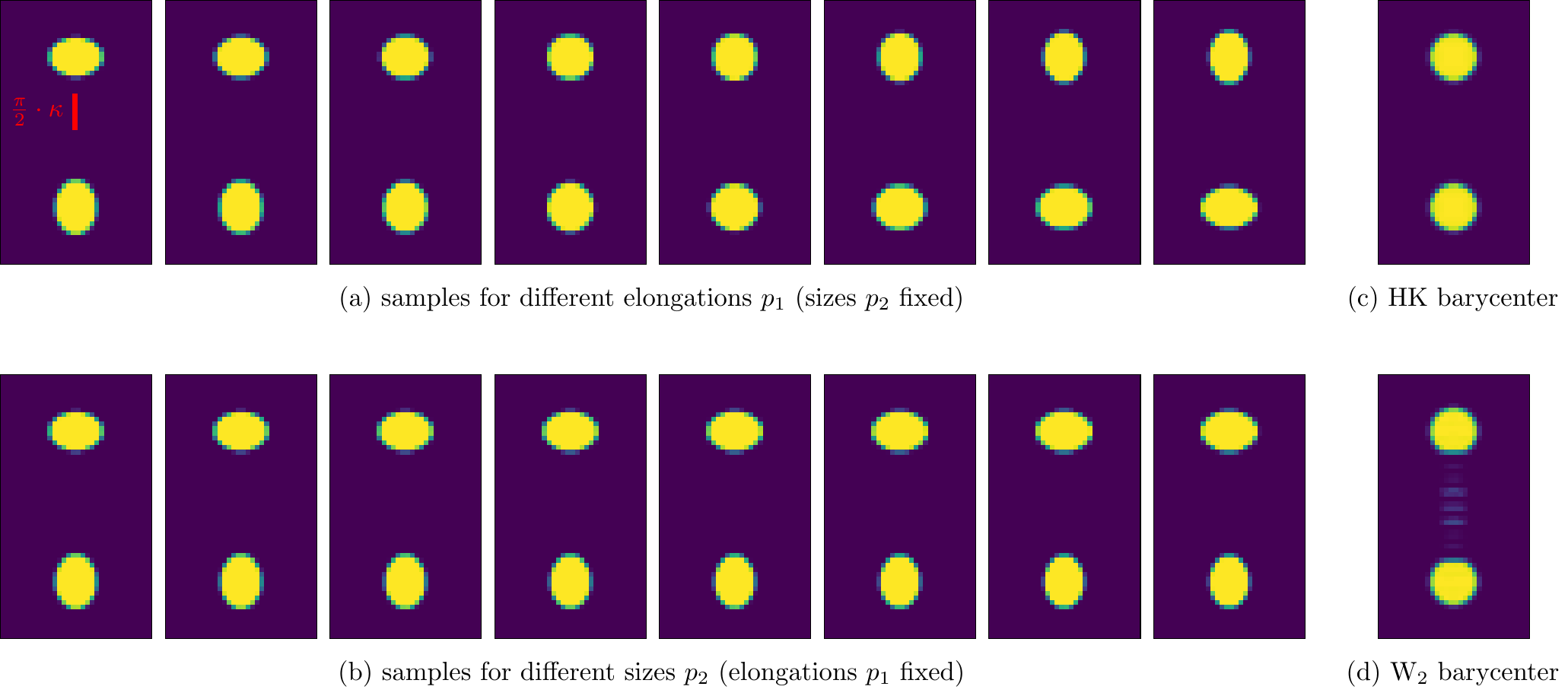}
	\caption{Some representative samples from the ellipses dataset and the corresponding barycenters in the $\HK$ and $\W_2$ distances. The red line in the top left sample has length $\tfrac{\pi}{2}\cdot \kappa$, representing the maximal transport distance under $\HK$.}
	\label{fig:NumericsEllipsesSamples}
\end{figure}

\paragraph{Motivation and setup}
In this example the set of samples is given by images of two ellipses on a $64 \times 64$ pixel grid. The mass density is 1 within the ellipses and 0 outside (and non-binary at the boundaries due to rasterization effects). Each image is characterized by two parameters $p_1, p_2 \in [-1,1]^2$. $p_1$ specifies the elongation of the ellipses. For $p_1=0$, both are circles; for $p_1>0$ one becomes elongated horizontally, the other one vertically; for $p_1<0$ the roles are reversed. 
$p_2$ controls their sizes (and hence their relative masses); sizes being equal for $p_2=0$; one expanding and the other one shrinking for $p_2>0$; and again, roles reversed for $p_2<0$.
The maximal change in the ellipse diameter between $p_2=-1$ and $p_2=+1$ is approximately $0.5$ pixels (ellipses are rendered at a higher resolution first and then reduced to $64 \times 64$ pixels). The corresponding relative change in mass is approximately $10\%$.
Examples for different $p_1,p_2$ are shown in Figure \ref{fig:NumericsEllipsesSamples} (a,b).
The sample images are generated by sampling both parameters on $8$ equidistant points from $[-1,1]$, yielding a total of $n=64$ input images.
Prior to analysis, all images are normalized to unit mass, cf.~Remark \ref{rem:HKMassScaling}.

We now analyze this dataset with the linearized $\W_2$ and the linearized $\HK$ distances.
For $\HK$ we set the length-scale parameter $\kappa$ to $5$ (see Remark \ref{rem:HKLengthScale}) so that the maximal transport distance is sufficient to track the deformation of the ellipses induced by $p_1$ but it separates well the two ellipses from each other (cf.~Figure \ref{fig:NumericsEllipsesSamples}). In more realistic applications several values for this parameter should be tested and validated, as in Section \ref{sec:NumericsJets}.
As reference point $\mu_0$ we use the (approximate) $\W_2$/$\HK$-barycenter of the samples. These are visualized in Figure \ref{fig:NumericsEllipsesSamples} (c,d).
Note that due to the difference of the masses of the two ellipses due to variations of the parameter $p_2$, the $\W_2$ barycenter has some mass located in between the two disks (the discrete pattern stems from the fact that we used only a finite number of values for $p_2$), whereas the $\HK$ barycenter exhibits no such artifacts.
Then we obtain the linear embeddings via the logarithmic maps.
We set $w_{i,0} \assign \Log_{\W_2}(\mu_0;\mu_i)$ and $(v_{i,0},\alpha_{i,0}) \assign \Log_{\HK}(\mu_0;\mu_i)$ for $i=1,\ldots,n$ (for our choice of $\mu_0$, the singular component $\mu_i^\perp$ is always zero, see above, and so we ignore it).

\begin{figure}
	\centering
	\includegraphics[]{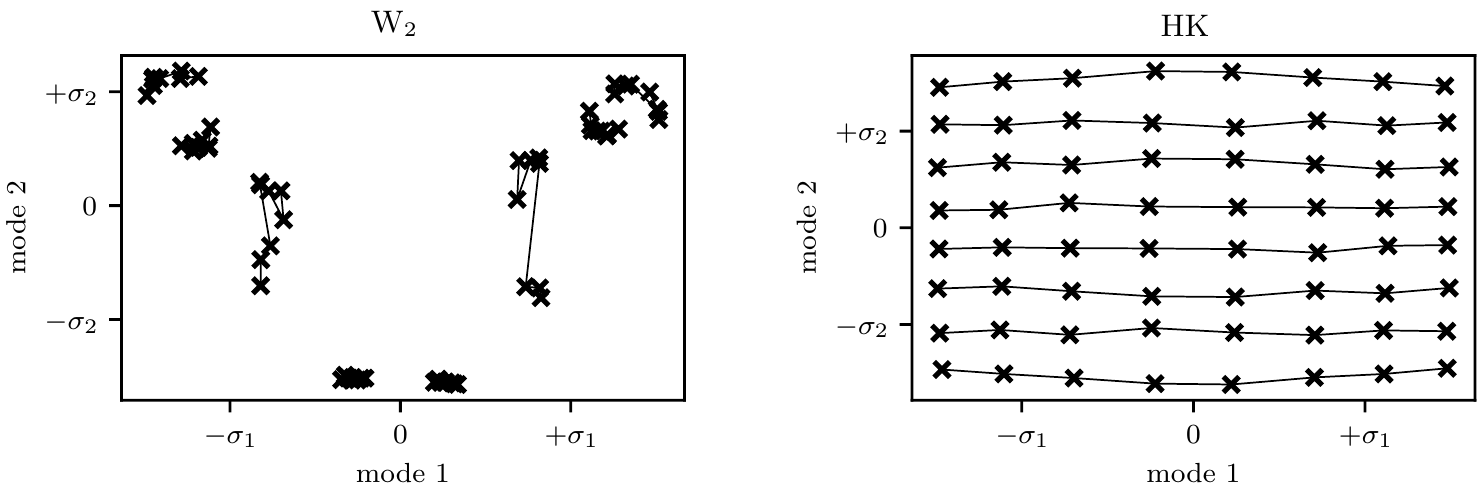}
	\caption{Coordinates of ellipse samples in PCA basis (two most dominant modes, axis scaling given in terms of standard deviation along each mode) for $\tn{Lin}_{\W_2}$ and $\tn{Lin}_{\HK}$ embeddings. Samples with identical size parameter $p_2$ are connected by lines. For $\HK$ a two-dimensional grid structure emerges, one mode corresponding to elongation and the other to size. For $\W_2$ the size variation dominates the embedding. Samples are roughly located on a one-dimensional curve, according to their size variation parameter $p_2$. Along this line, the samples are grouped into small clusters, each corresponding to one `pass' through the elongation parameter $p_1$ for fixed $p_2$.}
	\label{fig:NumericsEllipsesPCA}
\end{figure}

\paragraph{Principal component analysis}
To explore the dataset structure, we apply principal component analysis (PCA) to both ensembles of linear embeddings.
	Intuitively, if we pick the barycenter as support point for the linearization, the linear embeddings should already be centered in the tangent space.
	For the $\W_2$ metric this is known to be true \cite[Equation (3.10)]{WassersteinBarycenter}. For the $\HK$ metric we are not aware of such a result, but numerically it seems to be satisfied.
	If we pick another reference measure, we need to center the samples before applying PCA.
	
	The coordinates of the (centered) linear embeddings with respect to the two dominant PCA modes are shown in Figure \ref{fig:NumericsEllipsesPCA}.
	For the $\HK$ metric, we recover a two-dimensional grid structure that corresponds precisely two the two underlying parameters of the dataset.
	For the $\W_2$ metric, the coordinates with respect to the first two principal components are dominated by the size variation. The samples lie approximately on a one-dimensional curve, according to their size variation parameter $p_2$ with elongation variations only causing small perturbations near this curve.
	For $\HK$ the first two modes capture $95\%$ of the dataset variance, for $\W_2$ only $78\%$.
	Extracting information about the elongation will thus be decidedly more difficult from the $\W_2$ embedding.

\begin{figure}
	\centering
	\includegraphics[width=\textwidth]{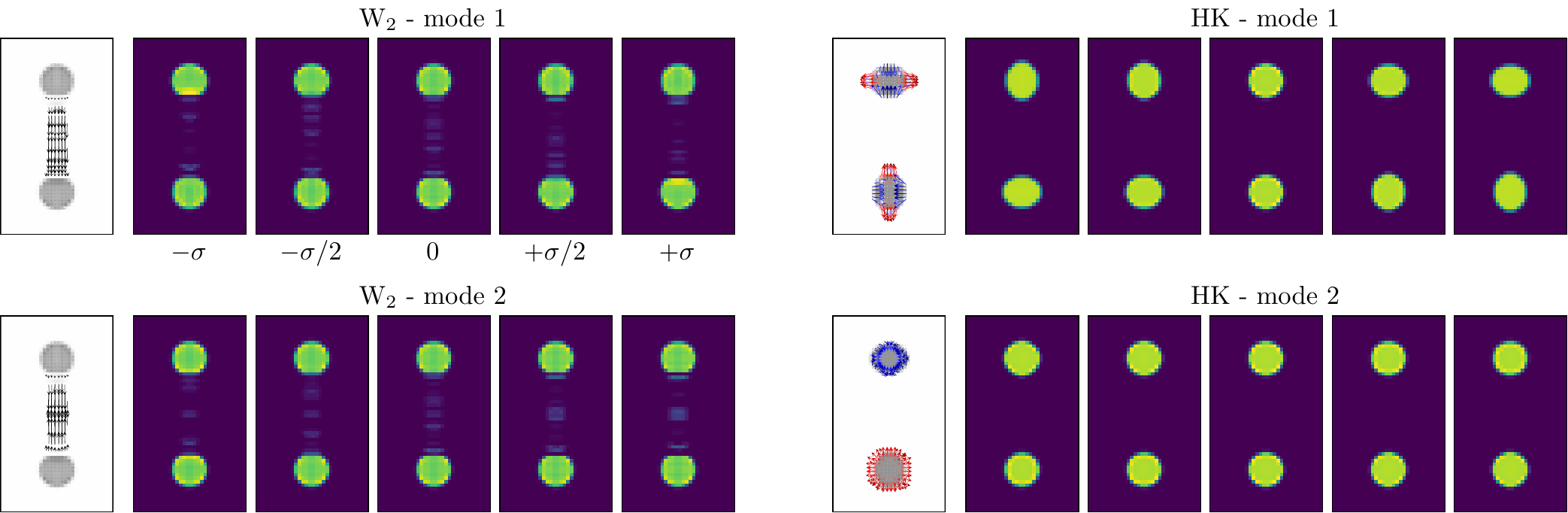}
	\caption{Visualizations of the two dominant PCA modes for linearized $\W_2$ and $\HK$ embeddings of the ellipse dataset. For each mode, the quiver plot on the left shows the initial velocity field $v_{0}$, for $\HK$ the color of the arrows encodes $\alpha_{0}$ (blue means decrease, red increase of mass).
	The five images on the right visualize the exponential map evaluated between $-\sigma$ and $\sigma$ where $\sigma$ denotes the standard deviation along the considered mode. $\HK$ accurately captures the two-dimensional structure of the dataset, $\W_2$ is dominated by the size variations.}
	\label{fig:NumericsEllipsesModes}
\end{figure}
	
	In Figure \ref{fig:NumericsEllipsesModes} the first two principal components for both linearizations are visualized as (colored) quiver plots and as curves of measures generated via the exponential map.
	In agreement with the previous observations, for $\W_2$ the first two modes seem to be concerned mostly with moving mass between the two disks, whereas for for $\HK$ the first mode clearly encodes variations in the disks elongation and the second mode encodes variations in their size.

	For the linearized $\W_2$ embedding the picture is qualitatively very similar if we pick as reference measure $\mu_0$ instead the $L^2$ or Hellinger mean of the samples, or a uniform measure on $\Omega$: the size variation shadows the elongation variation.
	For linearized $\HK$ the situation remains essentially unchanged with $L^2$ or Hellinger mean. For the uniform measure the mass variation is too subtle and is shadowed by the elongation variation. For stronger size variations a similar picture as above re-emerges.

	Of course, for more realistic datasets it cannot be expected that PCA will yield as transparent and simple results as for this toy example.
	But based on these observations we may still hope that the ability to locally vary the particle masses will make the linear embeddings via $\Log_{\HK}$ more robust to mass fluctuations and consequently simplify any subsequent analysis task, such as classification.
	We confirm this with the two subsequent examples.

\subsection{Cell morphometry}
\label{sec:NumericsCells}

\paragraph{Motivation and set up}
Our second example is an application to cell morphometry that appeared in~\cite{OptimalTransportTangent2012}.
There are 500 images of liver cells, of which 250 are cancerous and 250 are healthy.
The images were initially $192\times 192$ which were centered and reorientated, cropped to $128\times 128$ and then resized to $64\times 64$.
We used the $\Lp{2}$ barycenter as the reference image for linearisations of both $\W_2$ and $\HK$.
Three example images and the reference image are shown in Figure~\ref{fig:CellsExample}.
We set the scale parameter $\kappa$ (see Remark~\ref{rem:HKLengthScale}) to $1.5\%$, $15\%$ and $150\%$ of the size of the image, i.e.~in the $64\times 64$ images $\kappa$ was set to 1, 10 and 100 respectively.

\begin{figure}[hbt]
\centering
{\def\figw{2cm}
\begin{tikzpicture}[x=\figw,every node/.style={inner sep=0pt,anchor=north west,draw=black,line width=0.5pt}]
	\node at (0,0) []{\includegraphics[width=\figw]{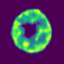}};
	\node at (1.2,0) []{\includegraphics[width=\figw]{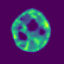}};
	\node at (2.4,0) []{\includegraphics[width=\figw]{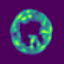}};
	\node at (3.6,0) []{\includegraphics[width=\figw]{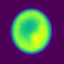}};
\end{tikzpicture}
}
\caption{Three exemplar images from the cell morphometry data set in Example~2 and the $\Lp{2}$ barycenter (right).}
\label{fig:CellsExample}
\end{figure}

\paragraph{Principal component analysis}
Following the methodology outlined in the previous sections we compute the linear Wasserstein and linear Hellinger--Kantorovich embeddings.
PCA shows one principal component dominates in all linear embeddings.
There is a gradual decrease from the second principal component onwards in the $\W_2$ embedding, whilst we see another drop between the third and fourth principal components in the HK embedding for $\kappa=10$.
The top three eigenmodes in the linear $\W_2$ and $\HK_{100}$ embeddings account for 79\% of the variance, which is more than the 75\% of the variance which the top three eigenmodes in the linear $\HK_{10}$ embedding account for, and significantly more than the 49\% of the variance the $\HK_1$ embedding accounted for.
All linear embeddings show that healthy samples have very little variation in the 2D projection (see Figure~\ref{fig:CellsPCA}).

We visualize the eigenvectors in Figure~\ref{fig:CellsEigShooting}.
The first three eigenvectors in the $\W_2$ and $\HK_{100}$ embeddings correspond (in order) to size, skewness and distribution of mass from the center to the boundary, which can be seen either from the vector fields or the shooting along eigenvectors in Figure~\ref{fig:CellsEigShooting}.
We see the same effects in the $\HK_1$ and $\HK_{10}$ eigenvectors but in a different order.
In the $\HK_1$ and $\HK_{100}$ embeddings the first two eigenvectors capture some skewness and mass transfer from the center to the boundary.
The third HK eigenvector captures the size of the cell.

\begin{figure}[hbt]
\centering

\includegraphics[]{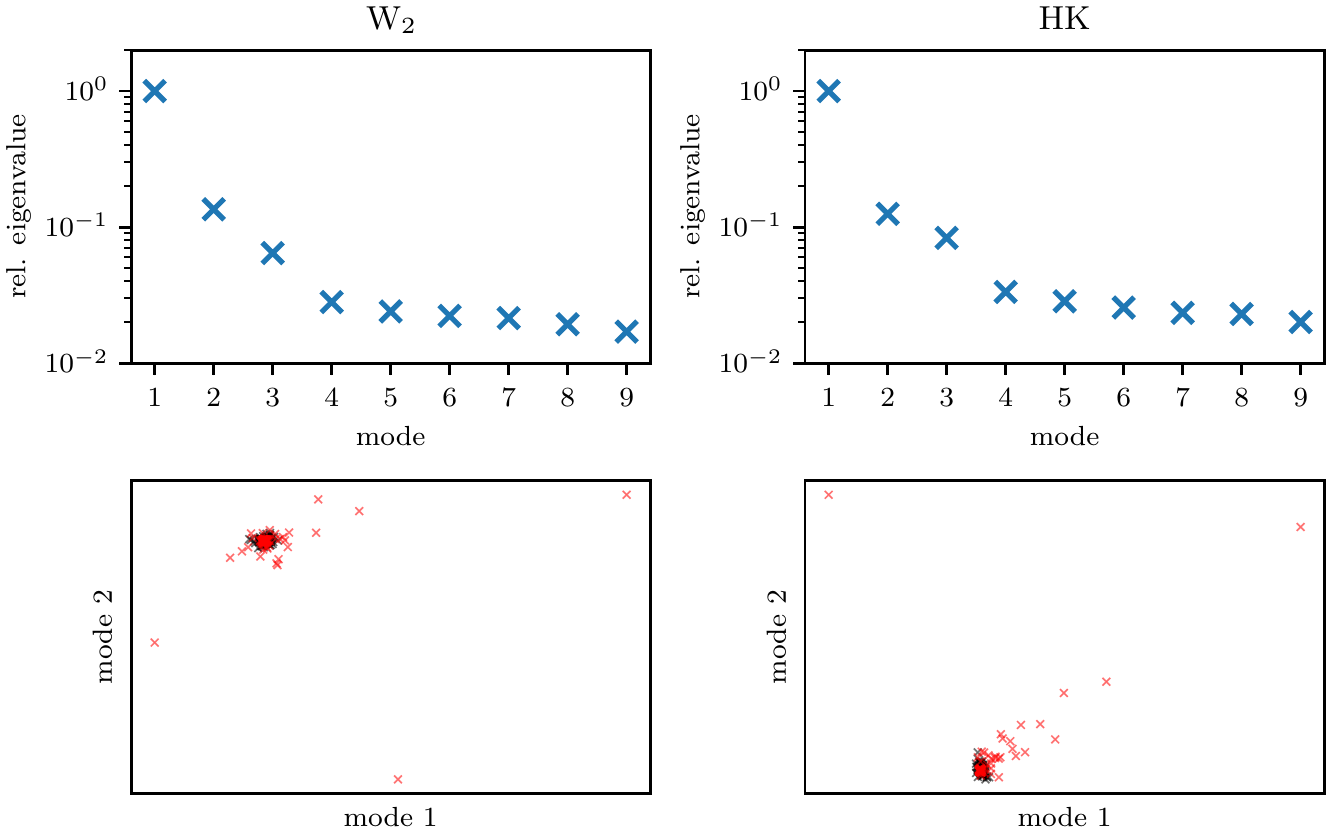}
\caption{Visualization of the PCA eigenvalues (divided by the largest eigenvalue) for linearized $\W_2$ (top left) and $\HK_{10}$ (top right) embeddings in Example~2.
Coordinates for the two dominant eigenmodes for each image (black healthy, red cancerous) in the bottom row.
Figures for $\kappa=1$ and $\kappa=100$ are omitted but are qualitatively similar to $\W_2$ and $\HK_{10}$ plots.}
\label{fig:CellsPCA}
\end{figure}

\begin{figure}[hbt]
\centering
\def\figw{1.1cm}
\begin{tikzpicture}[
				x=\figw,y=\figw,
				img/.style={inner sep=0pt,draw=black,line width=1pt,anchor=north west},
				lbl/.style={anchor=base},
	]
	
\draw (7.1,0.4) -- (7.1,-11.1);	
\draw (0,-5.3) -- (14.2,-5.3);

	\node[img] at (0,0) []{\includegraphics[width=\figw]{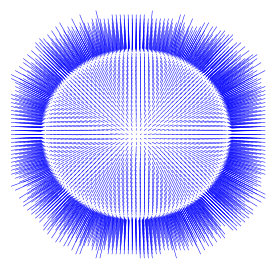}};
	\begin{scope}[shift={(1.2,0)}]
	\node[lbl] at (1.1*2+0.5,0.1) []{$\W_2$ - mode 1};
	\foreach \x/\l in {0/{$-\sigma$},1/{$-\sigma/2$},2/{$0$},3/{$+\sigma/2$},4/{$+\sigma$}} {
		\node[img] at (1.1*\x,0) {\includegraphics[width=\figw]{W2Shooting0_00\x.png}};
		\node[lbl] at (1.1*\x+0.5,-1.3*\figw) {\l};
	}
	\end{scope}	

	\begin{scope}[shift={(0,-1.8*\figw)}]
	\node[img] at (0,0) []{\includegraphics[width=\figw]{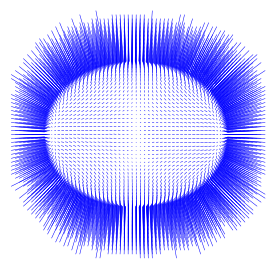}};
	\begin{scope}[shift={(1.2,0)}]
	\node[lbl] at (1.1*2+0.5,0.1) []{$\W_2$ - mode 2};
	\foreach \x in {0,1,2,3,4} {
		\node[img] at (1.1*\x,0) []{\includegraphics[width=\figw]{W2Shooting1_00\x.png}};
	}
	\end{scope}	
	\end{scope}	
	
	\begin{scope}[shift={(0,-3.6*\figw)}]
	\node[img] at (0,0) []{\includegraphics[width=\figw]{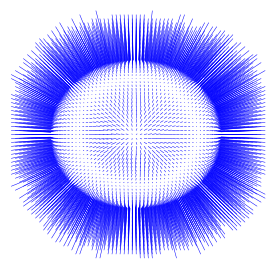}};
	\begin{scope}[shift={(1.2,0)}]
	\node[lbl] at (1.1*2+0.5,0.1) []{$\W_2$ - mode 3};
	\foreach \x in {0,1,2,3,4} {
		\node[img] at (1.1*\x,0) []{\includegraphics[width=\figw]{W2Shooting2_00\x.png}};
	}
	\end{scope}	
	\end{scope}

	\begin{scope}[shift={(7.6,0)}]
	\node[img] at (0,0) []{\includegraphics[width=\figw]{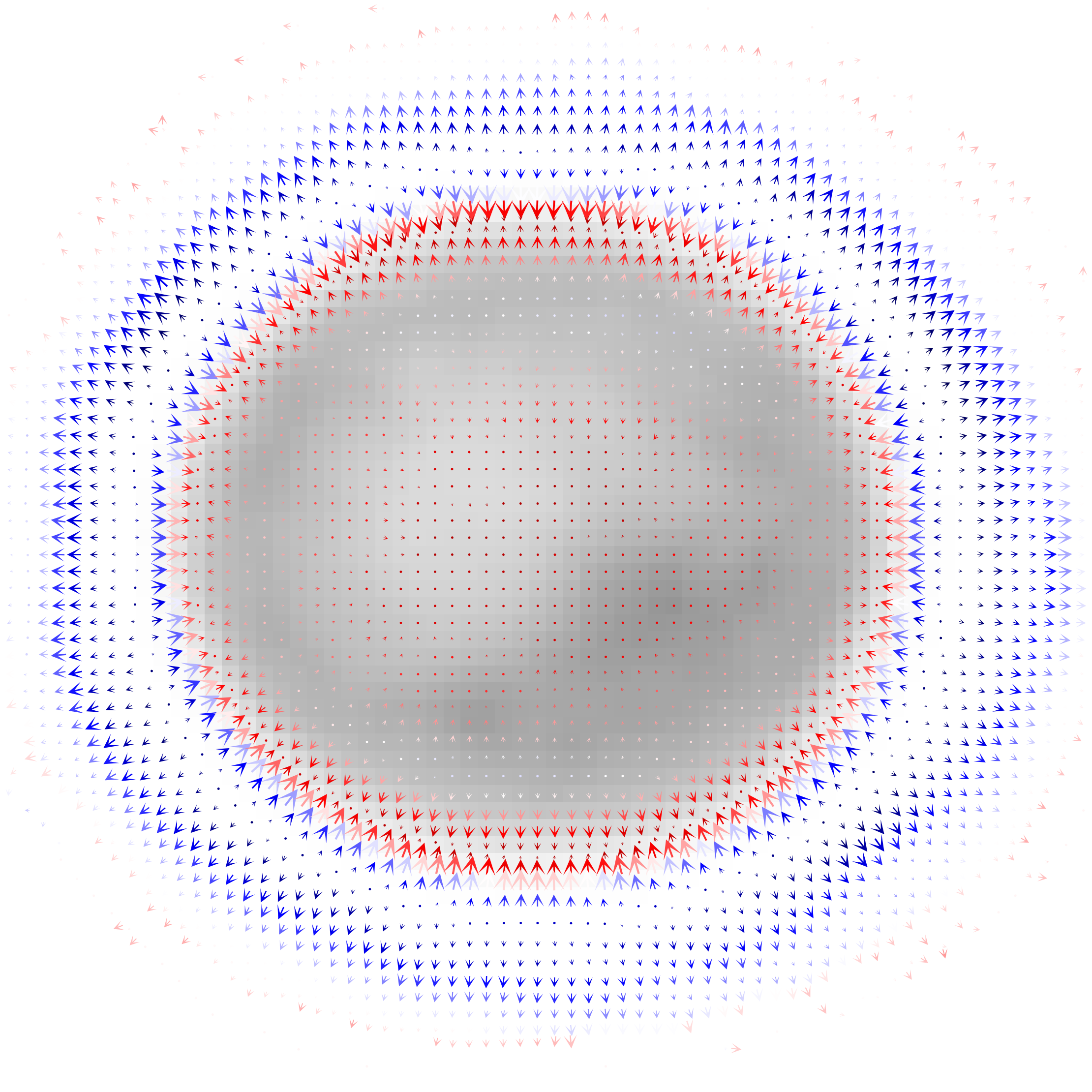}};
	\begin{scope}[shift={(1.2,0)}]
	\node[lbl] at (1.1*2+0.5,0.1) []{$\HK_{1}$ - mode 1};
	\foreach \x in {0,1,2,3,4} {
		\node[img] at (1.1*\x,0) []{\includegraphics[width=\figw]{HKShooting_kappa1_0_00\x.png}};
	}
	\end{scope}	
	\end{scope}	
	
	\begin{scope}[shift={(7.6,-1.8*\figw)}]
	\node[img] at (0,0) []{\includegraphics[width=\figw]{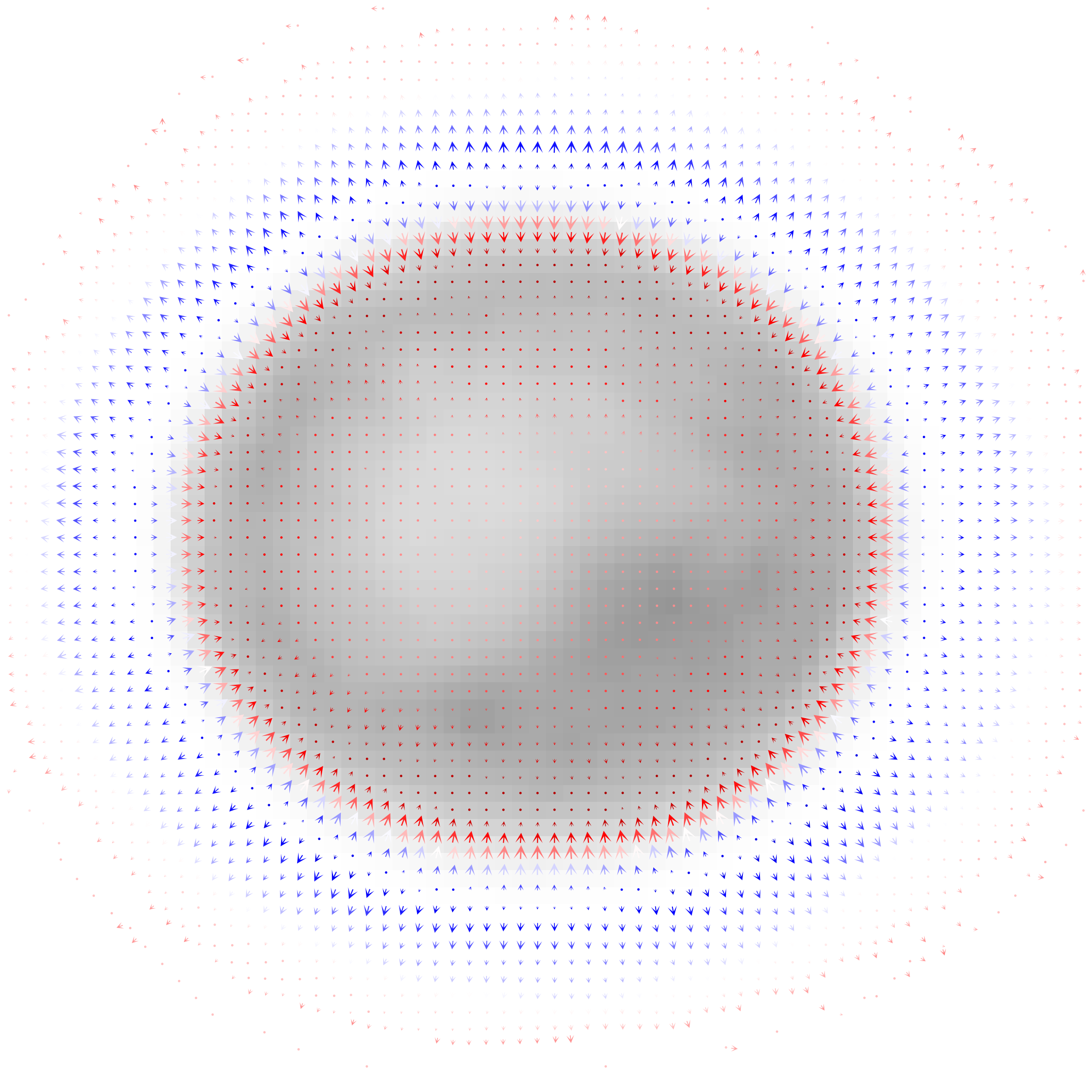}};
	\begin{scope}[shift={(1.2,0)}]
	\node[lbl] at (1.1*2+0.5,0.1) []{$\HK_{1}$ - mode 2};
	\foreach \x in {0,1,2,3,4} {
		\node[img] at (1.1*\x,0) []{\includegraphics[width=\figw]{HKShooting_kappa1_1_00\x.png}};
	}
	\end{scope}	
	\end{scope}	

	\begin{scope}[shift={(7.6,-3.6*\figw)}]
	\node[img] at (0,0) []{\includegraphics[width=\figw]{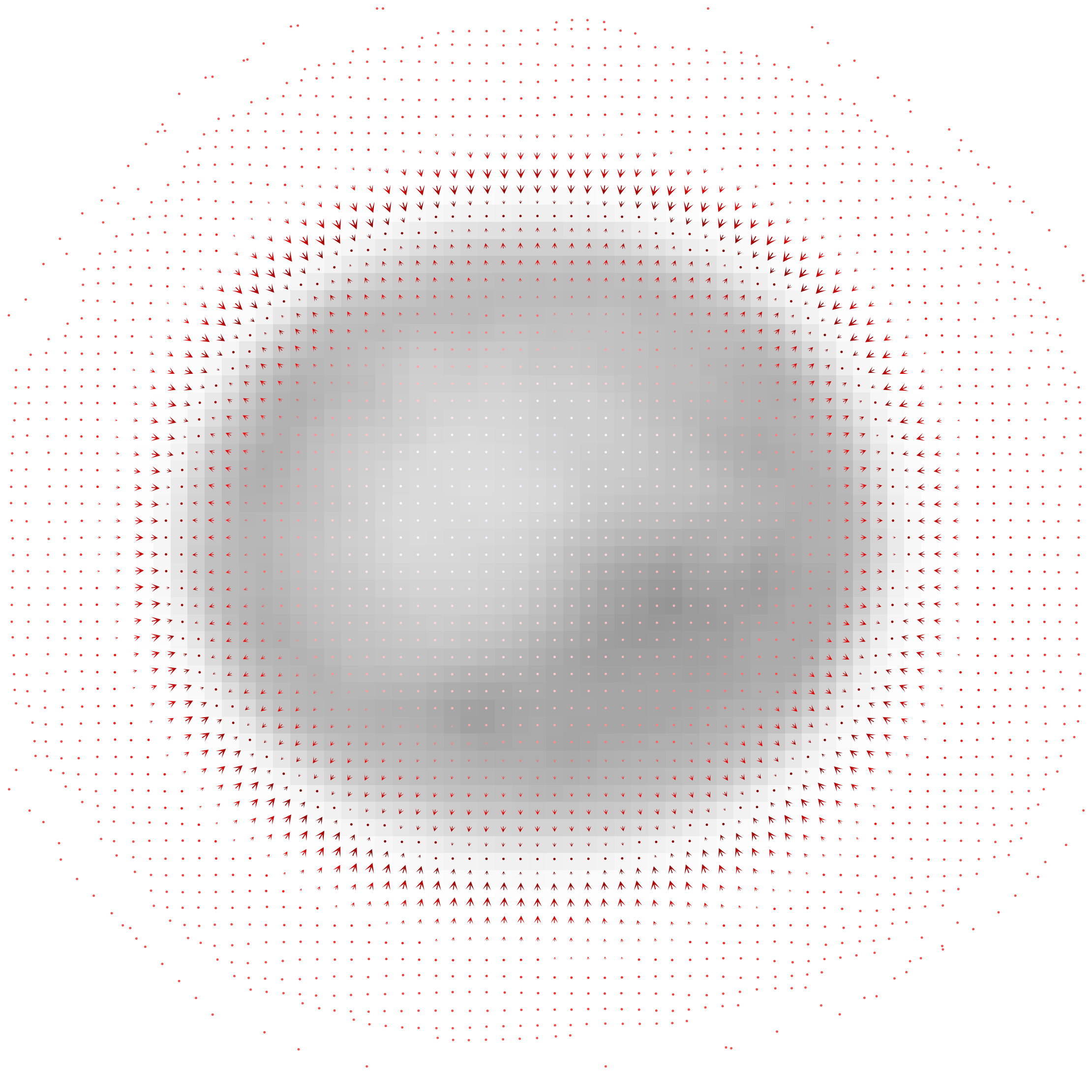}};
	\begin{scope}[shift={(1.2,0)}]
	\node[lbl] at (1.1*2+0.5,0.1) []{$\HK_{1}$ - mode 3};
	\foreach \x in {0,1,2,3,4} {
		\node[img] at (1.1*\x,0) []{\includegraphics[width=\figw]{HKShooting_kappa1_2_00\x.png}};
	}
	\end{scope}	
	\end{scope}

	\begin{scope}[shift={(0,-6.4*\figw)}]
	\node[img] at (0,0) []{\includegraphics[width=\figw]{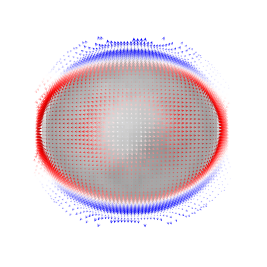}};
	\begin{scope}[shift={(1.2,0)}]
	\node[lbl] at (1.1*2+0.5,0.1) []{$\HK_{10}$ - mode 1};
	\foreach \x in {0,1,2,3,4} {
		\node[img] at (1.1*\x,0) []{\includegraphics[width=\figw]{HKShooting0_00\x.png}};
	}
	\end{scope}	
	\end{scope}	
	
	\begin{scope}[shift={(0,-8.2*\figw)}]
	\node[img] at (0,0) []{\includegraphics[width=\figw]{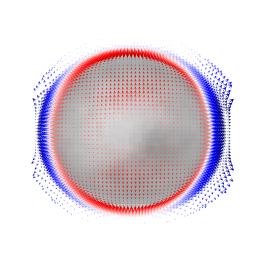}};
	\begin{scope}[shift={(1.2,0)}]
	\node[lbl] at (1.1*2+0.5,0.1) []{$\HK_{10}$ - mode 2};
	\foreach \x in {0,1,2,3,4} {
		\node[img] at (1.1*\x,0) []{\includegraphics[width=\figw]{HKShooting1_00\x.png}};
	}
	\end{scope}	
	\end{scope}	

	\begin{scope}[shift={(0,-10.0*\figw)}]
	\node[img] at (0,0) []{\includegraphics[width=\figw]{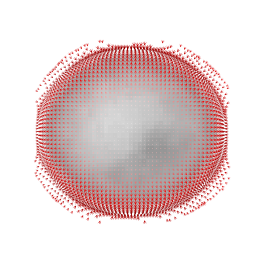}};
	\begin{scope}[shift={(1.2,0)}]
	\node[lbl] at (1.1*2+0.5,0.1) []{$\HK_{10}$ - mode 3};
	\foreach \x in {0,1,2,3,4} {
		\node[img] at (1.1*\x,0) []{\includegraphics[width=\figw]{HKShooting2_00\x.png}};
	}
	\end{scope}	
	\end{scope}

	\begin{scope}[shift={(7.6,-6.4)}]
	\node[img] at (0,0) []{\includegraphics[width=\figw]{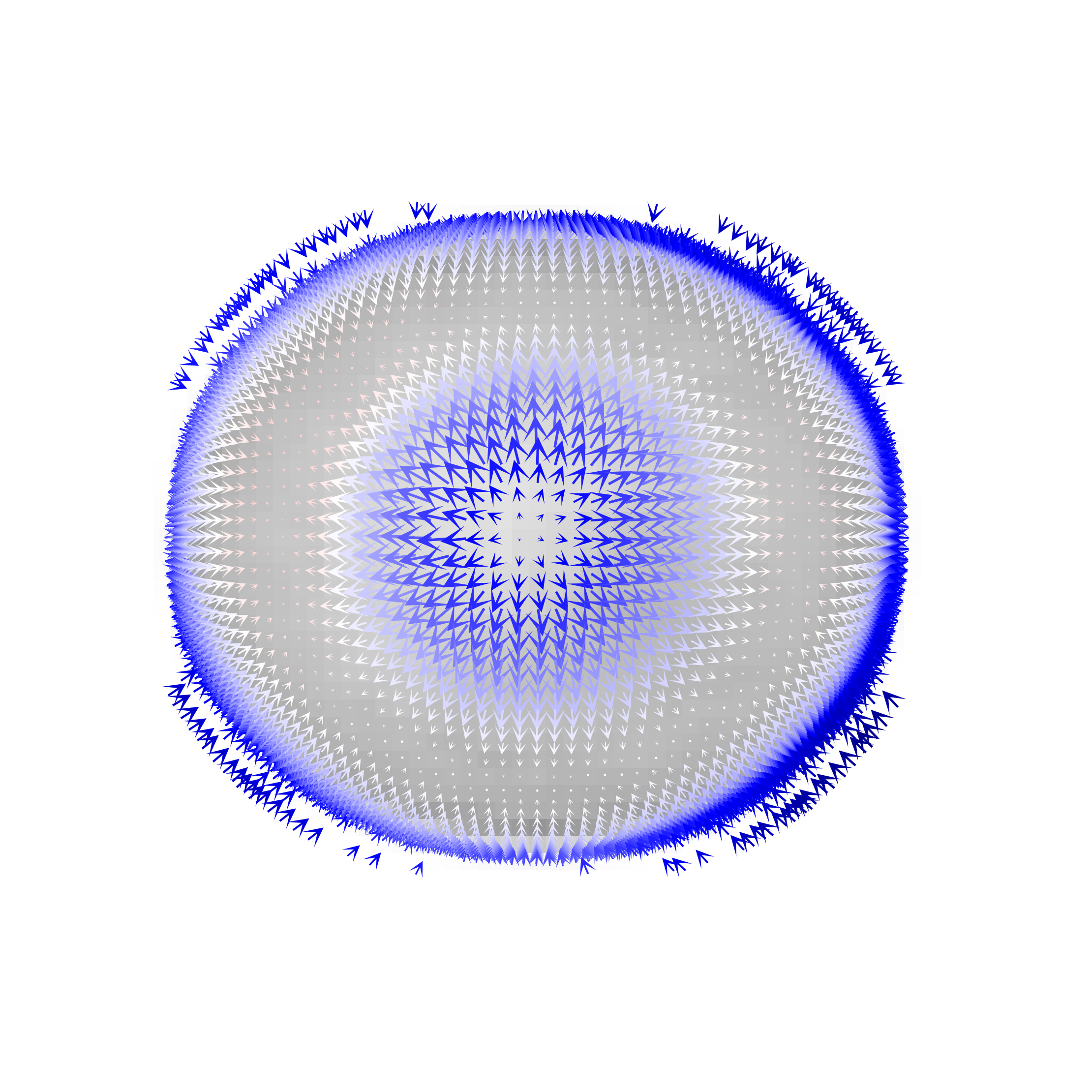}}; 
	\begin{scope}[shift={(1.2,0)}]
	\node[lbl] at (1.1*2+0.5,0.1) []{$\HK_{100}$ - mode 1};
	\foreach \x in {0,1,2,3,4} {
		\node[img] at (1.1*\x,0) []{\includegraphics[width=\figw]{HKShooting_kappa100_0_00\x.png}}; 
	}
	\end{scope}	
	\end{scope}	
	
	\begin{scope}[shift={(7.6,-8.2*\figw)}]
	\node[img] at (0,0) []{\includegraphics[width=\figw]{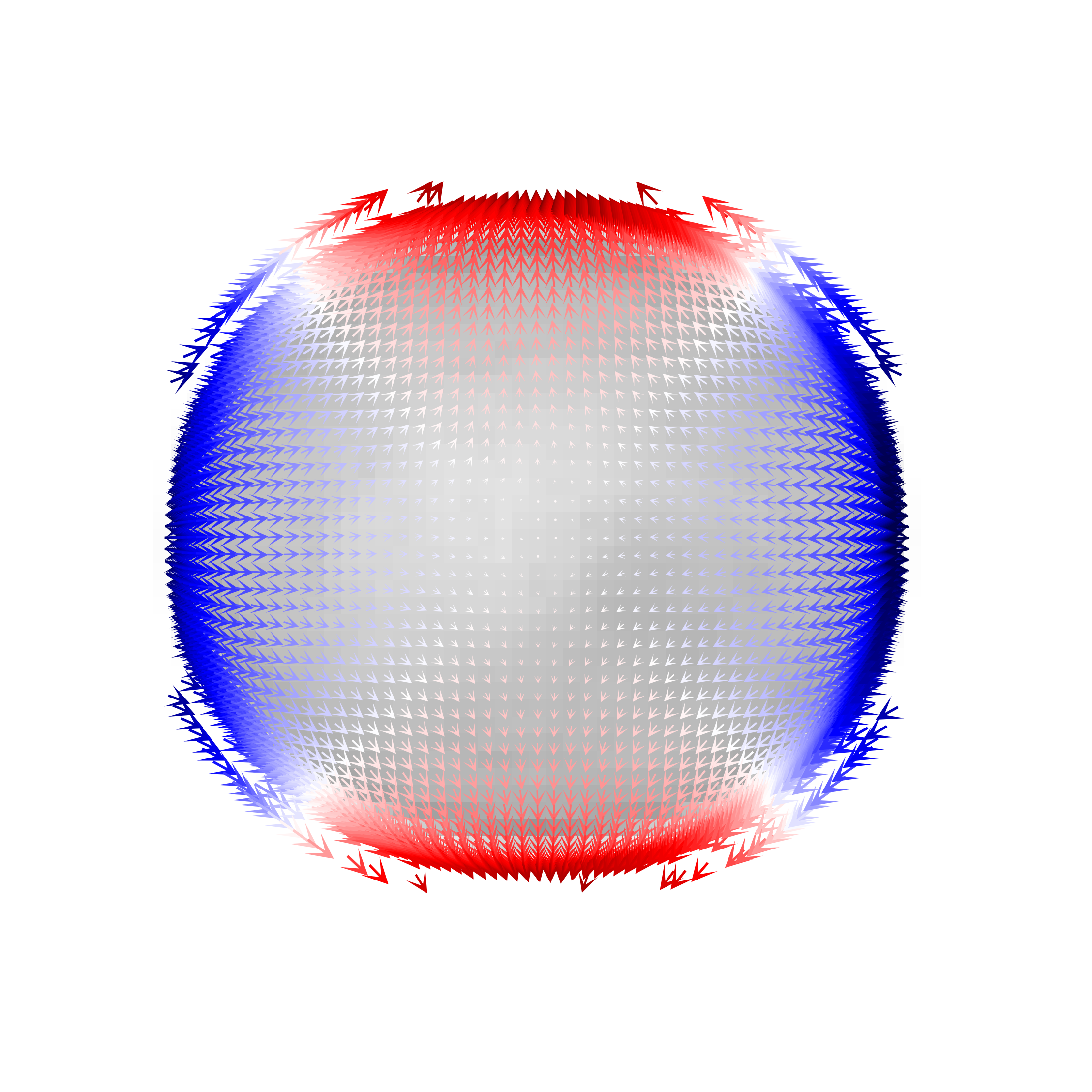}}; 
	\begin{scope}[shift={(1.2,0)}]
	\node[lbl] at (1.1*2+0.5,0.1) []{$\HK_{100}$ - mode 2};
	\foreach \x in {0,1,2,3,4} {
		\node[img] at (1.1*\x,0) []{\includegraphics[width=\figw]{HKShooting_kappa100_1_00\x.png}}; 
	}
	\end{scope}	
	\end{scope}	

	\begin{scope}[shift={(7.6,-10.0*\figw)}]
	\node[img] at (0,0) []{\includegraphics[width=\figw]{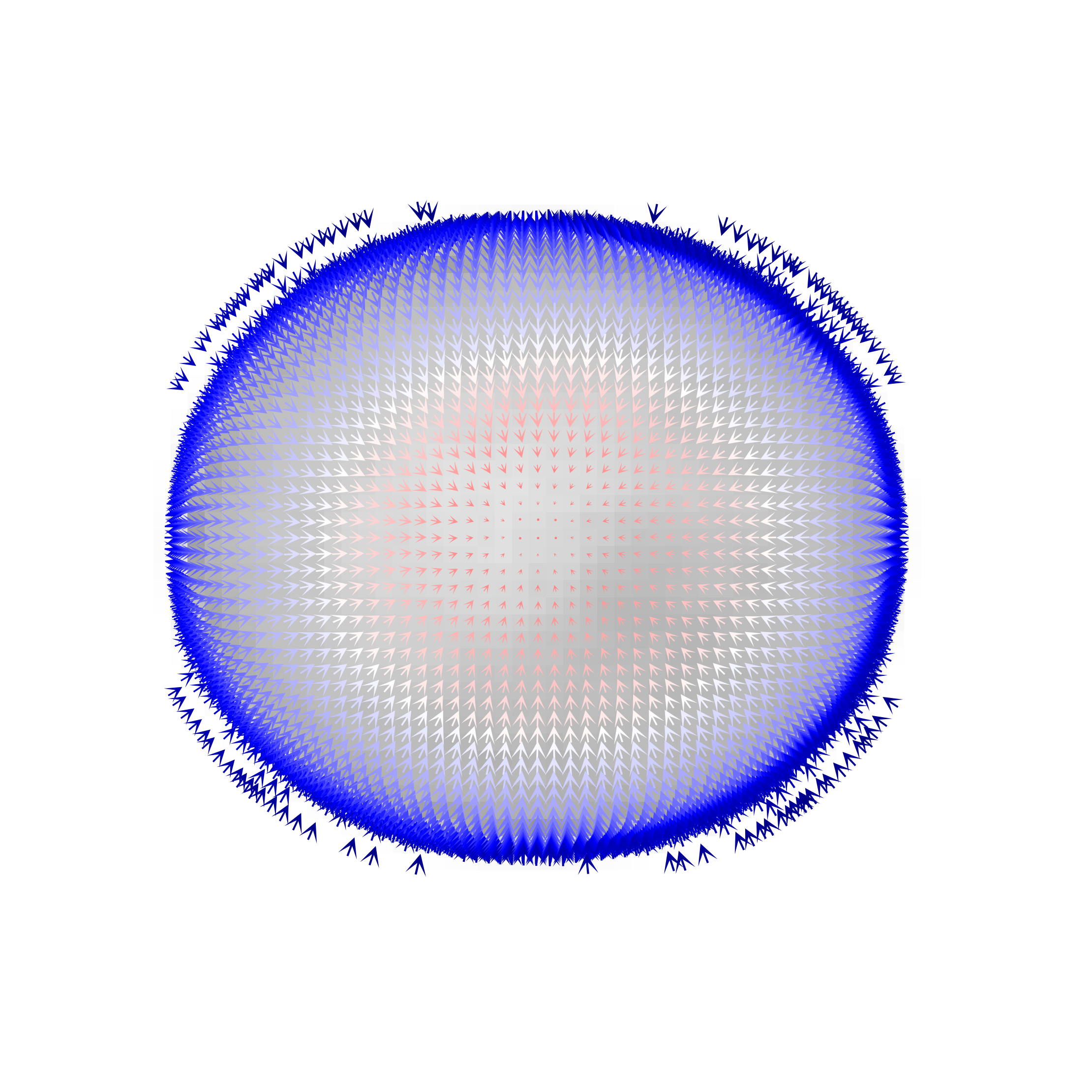}}; 
	\begin{scope}[shift={(1.2,0)}]
	\node[lbl] at (1.1*2+0.5,0.1) []{$\HK_{100}$ - mode 3};
	\foreach \x in {0,1,2,3,4} {
		\node[img] at (1.1*\x,0) []{\includegraphics[width=\figw]{HKShooting_kappa100_2_00\x.png}}; 
	}
	\end{scope}	
	\end{scope}	

\end{tikzpicture}

\caption{Variation along the first three eigenvectors in Example~2 for the $\W_2$ linear embedding (top left) and the $\HK$ linear embeddings.}
\label{fig:CellsEigShooting}
\end{figure}

\paragraph{Classification accuracy}
To test how the geometry of the data reflects the label of interest (whether the cell is cancerous or not) we implement a very simple 1NN classifier.
We test the classification accuracy based on the nearest neighbour using a leave-one-out (or equivalently 500-fold cross validation) strategy.
That is, for each image we find the label of the image of the nearest neighbour (using either the linear Wasserstein or linear Hellinger--Kantorovich embedding).
We also add results using $\HK_{1000}$ which, for $\kappa$ so large is approximately the Wasserstein distance.
We give the accuracy, false positive rate and true positive rate in Table~\ref{tab:CellsClassifierResults}; in terms of accuracy and true positive we see that the best performing HK linearisation outperforms the W2 linearisation, and in terms of false positive rate the best performing HK linearisations are essentially tied with the W2 linearisation.

\begin{table}[hbt]
	\centering
	\caption{Results for healthy vs cancerous using a 1NN classifier in the linear spaces. TPR is the true positive rate (also called sensitivity) and FPR is the false positive rate.}
	\label{tab:CellsClassifierResults}
		{\scriptsize
		\begin{tabular}{|c|c|c|c|c|c|}
			\hline
			 & $\HK_1$ & $\HK_{10}$ & $\HK_{100}$ & $\HK_{1000}$ & $\W_2$ \\
			\hline \hline
			\textbf{accuracy} & \textbf{0.654} & \textbf{0.686} & \textbf{0.670} & \textbf{0.672} & \textbf{0.668} \\
			TPR & 0.728 & 0.704 & 0.660 & 0.660 & 0.656 \\
			FPR & 0.420 & 0.332 & 0.320 & 0.316 & 0.320 \\
			\hline
		\end{tabular}
		}
\end{table}
\subsection{Collider events}
\label{sec:NumericsJets}

\paragraph{Motivation and setup} Our third example extends the analysis in~\cite{Cai_2020} on the application of OT to jet physics. Here the central goal is to extract physics from collider data which after appropriate reconstruction can be presented as images. A salient feature in those images is the presence of jets---collimated sprays of strongly-interacting particles bearing the imprint of the underlying collision. It was shown that (balanced) OT induces a physically meaningful metric on the space of collider events useful for jet tagging \cite{ColliderMetric2019}, a task that tries to ascertain the jet origin and thus the nature of the collision. Linearized $\W_2$ has been utilized to reduce computational complexity and simplify later analyses by offering a Euclidean embedding \cite{Cai_2020}. Here we show that this can be further improved by using linearized unbalanced $\HK$. 

We focus on distinguishing boosted W boson jets from QCD (quark or gluon) background jets, though the same analysis has been extended to other pairwise tagging tasks with qualitatively similar results obtained.
We use the same simulated dataset as in~\cite{Cai_2020}, with 10k jets in total, about half of which are W jets. See \cite[Section 3]{Cai_2020} for more details on data generation and preprocessing. 

A jet can be seen as a discrete measure in the rapidity-azimuth ($y-\phi$) plane, i.e.~$\Omega \subset \R^2$. Mass then corresponds to the transverse momentum $p_T$ of its constitute particle. As an illustration, the upper left plot of Figure~\ref{fig:EventDisplay} shows a typical QCD jet in green, with the size of the points proportional to the particles' $p_T$. For the reference measure we again pick a regular Cartesian grid of $15 \times 15$ points covering the rectangle $[-1.7,1.7] \times [-\pi/2,\pi/2]$ with uniform $p_T$ distribution; see the blue dots in the same figure. As in previous examples, we normalize all jets to unit $p_T$ before analysis (see Remark \ref{rem:HKMassScaling}).

\paragraph{Choice of $\HK$ length scale $\kappa$}
When using $\HK$, the choice of the length scale parameter $\kappa$ is crucial for the success of the analysis (Remark \ref{rem:HKLengthScale}). We scan values over several orders of magnitude, i.e.~$\kappa \in [0.01,100]$. Figure~\ref{fig:EventDisplay} displays various optimal transports from the uniform reference measure to a QCD and a W jet. Considering the length scale of the sample and reference measures, we expect that $\kappa=100$ is very close to balanced $\W_2$ and $\kappa=0.01$ essentially behaves like the Hellinger distance. In particular, for the latter the assumption that $\mu_{i}^\perp=0$ is far from true, since the maximal transport distance is substantially smaller than the grid spacing of the reference jet ($\approx 0.2$). Therefore, virtually no transport happens and almost all mass is being created and destroyed (which is why $\kappa=0.01$ was excluded from Figure~\ref{fig:EventDisplay}). Consequently, the classification performance clearly deteriorates for $\kappa=0.01$. We find that $\kappa$ between 0.1 and 1 performs best, which is the regime where both transport and mass creation/destruction are relevant. See Table~\ref{tab:NumericsClassifierResults} and the discussion in \textit{Classification}. 

\begin{figure}[hbt]
	\centering
	\begin{tabular}{lr}
		\includegraphics[width=0.95\textwidth]{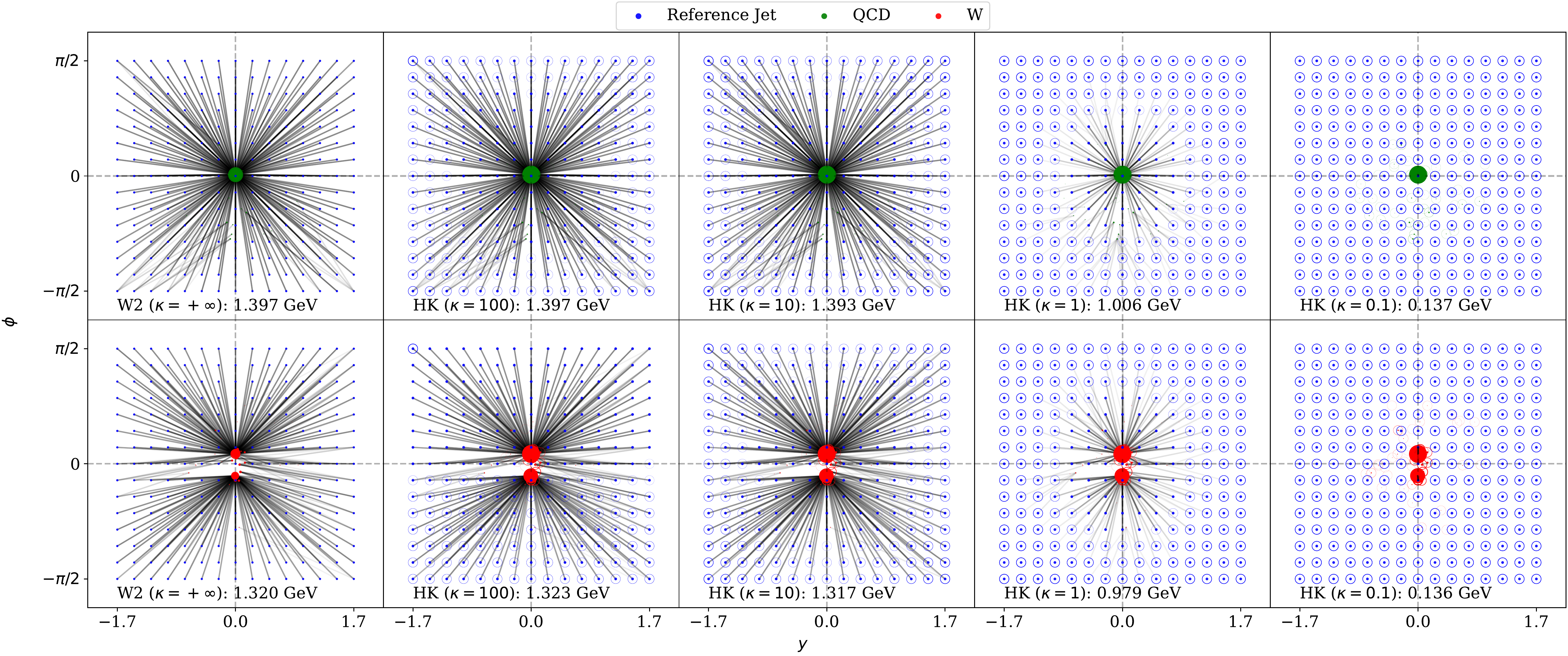} 
	\end{tabular}
	\caption{Optimal transports between the uniform reference measure (blue) and a typical QCD jet (green, first row), or a W jet (red, second row), using $\W_2$ and $\HK$ with $\kappa = 100, 10, 1, 0.1$ (from left to right with $\kappa=+\infty$ denoting balanced $\W_2$). The darkness of the lines indicates how much $p_T$ is moved from one particle to another. For $\HK$, the thickness of the circles around the points represents how much $p_T$ is destroyed for that particular particle. Shown at the bottom of the plots are the total OT distances between the jets, which are similar for $\kappa=+\infty, 100, 10$, the transport regime.}
	\label{fig:EventDisplay}
\end{figure}

\paragraph{Principal component analysis and Linear discriminant analysis}
As before, we start with a principal component analysis. Compared to the previous datasets, this one exhibits a high intrinsic dimensionality. Approximately 30 modes are needed to capture at least $90\%$ of the dataset variance ($\W_2$: 27; $\HK_{\kappa=10}$: 28; $\HK_{\kappa=1}$: 38; $\HK_{\kappa=0.1}$: 26). Moreover, for all $\kappa$ (including $\kappa=\infty$) the mean of the sample point cloud is far from any sample, which indicates that the samples lie on a submanifold of the tangent space with non-trivial shape and topology. Therefore we do not find it surprising that applying the exponential map to individual dominant modes relative to the mean, or projecting samples to very few modes, did not yield artificial jet images useful for physical interpretation.
For $\kappa=1$ the distribution of the dataset in the tangent space with respect to the first two dominant modes is visualized in Figure~\ref{fig:PCA}. We observe that the two classes are discernible as distinct clusters with relatively little overlap and that the two coordinate values are highly dependent.
In addition, for several points in the PCA coefficient space we show the jets that are closest to those coefficients and approximations by the exponential map, indicating that variations of the PCA coefficients correspond to physically meaningful changes of the jets that can locally be approximated linearly via the exponential map.
For other values of $\kappa \in [0.1,\infty]$ PCA yields qualitatively similar results.

\begin{figure}
	\centering
	\includegraphics[height=4cm]{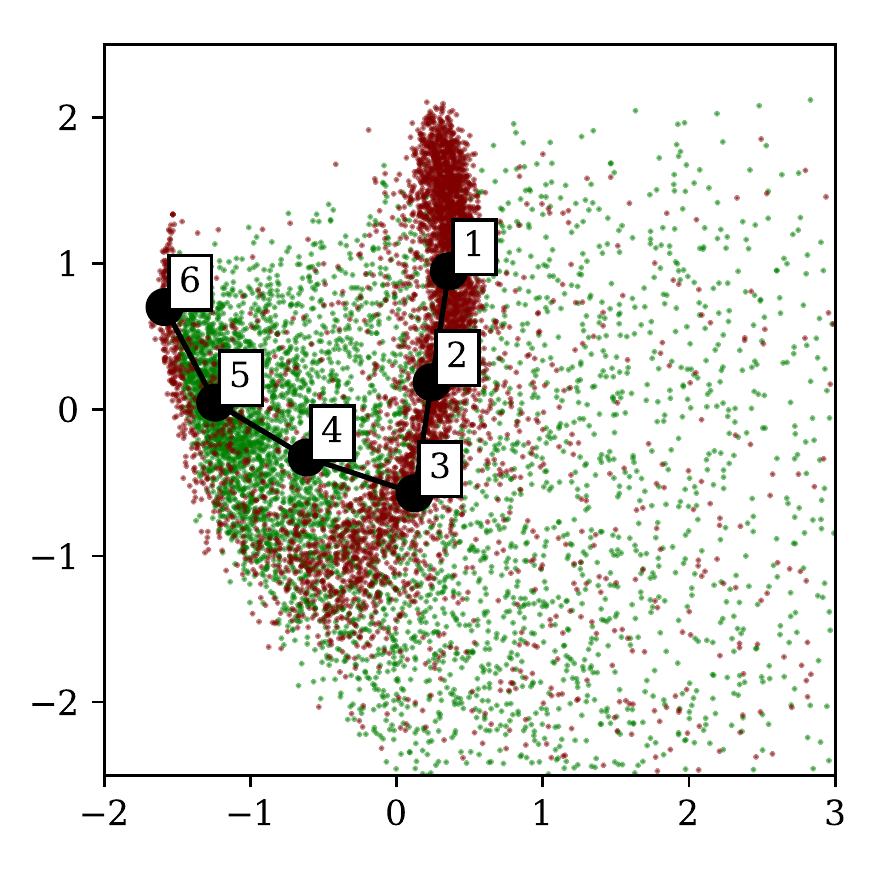}
	\hspace{1cm}
	\includegraphics[height=4cm]{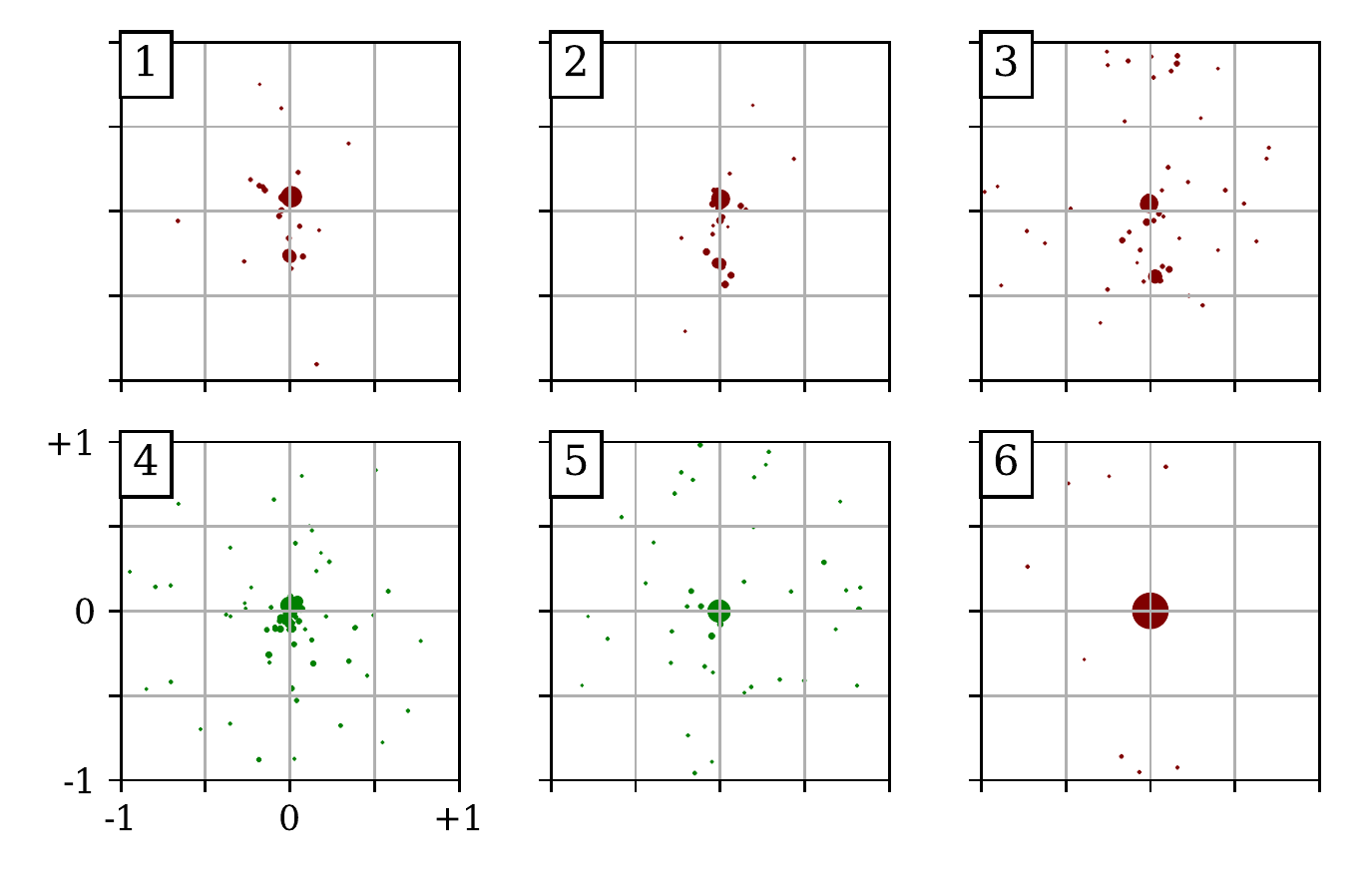}
	\hspace{1cm}
	\includegraphics[height=4cm]{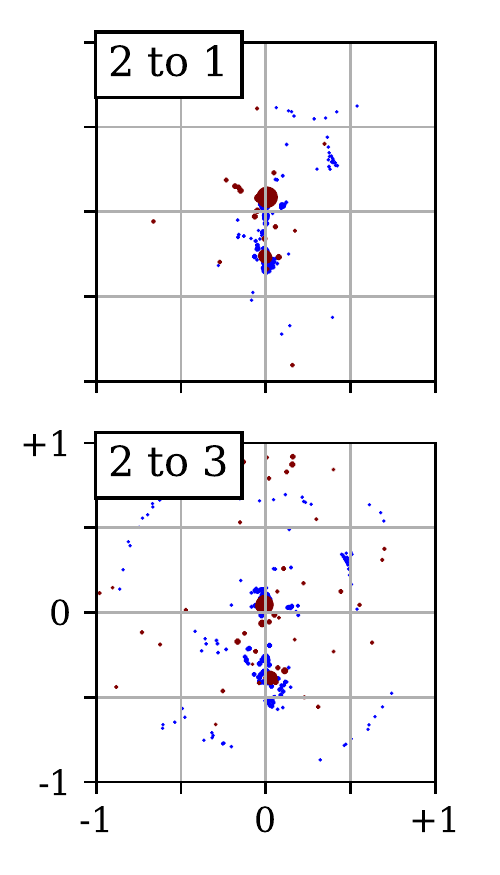}
	\caption{Left, distribution of 10k W (red) and QCD (green) jets in the tangent space with respect to the first two dominant PCA modes for and $\HK$ with $\kappa=1$ with some positions in coefficient space marked. %
	Middle, jets in $(y-\phi)$-plane with first two PCA coefficients closest to the marks, color indicating jet type. From mark 1-3 the lower prong is moving and becoming weaker, while background noise increases. Mark 4 shows a single prong with strong noise. At mark 6 there is a distinct cluster of W jets with a single, strongly focused prong. %
	Right, approximating jets 1 and 3 (red) by the exponential map (blue) from jet 2, using the difference of the first two PCA coefficients as tangential direction. While this cannot account for the considerable variation that lies orthogonal to this 2d plane, it correctly describes the movement of the lower prong and the increase in peripheral noise.}
	\label{fig:PCA}
\end{figure}

Next, we apply linear discriminant analysis (LDA). LDA assumes that samples from both classes are drawn from two Gaussian distributions with different means but identical covariance matrices. One then infers the hyperplane that optimally separates the two classes in a Bayesian sense. Let the hyperplane be parametrized by a unit normal vector $t$ and a point $z$ in the plane. (Here and in the following we use general vector space notation as our discussion will apply to both the linearized $\W_2$ and $\HK$ metrics.) On the one hand, LDA serves as a simple linear classifier where samples are labeled according to which side of the hyperplane they lie in, i.e.~the predicted class of sample $x_i$ depends on the sign of $\langle x_i-z,t \rangle$. On the other hand, we can analyze whether the direction $t$ has a physical meaning.

The first row in Figure~\ref{fig:LDA} shows the distribution of the projection coordinate $\langle x_i-z,t \rangle$ for $\W_2$ and $\HK$ with various $\kappa$. We find that for $\kappa=0.1$, $\HK$ best separates the two classes, which is confirmed by the superior performance of the LDA classifier; see Table~\ref{tab:NumericsClassifierResults}. Similarly to PCA, applying the exponential map to the discriminating direction $t$ relative to the sample mean did not yield physically valid results, presumably for the same reasons.
Instead, we visualize the direction in another way: for some $\lambda \in \R$ we find the sample $x_i$ such that $\langle x_i-z,t \rangle$ is closest to $\lambda$, i.e.~among all samples $x_i$ is closest to the hyperplane given by $\{x | \langle x-z, t \rangle = \lambda\}$. We vary $\lambda$ on the order $-3\sigma$ to $3\sigma$ where $\sigma$ denotes the standard variation of the samples along the direction $t$. This is shown in the lower two rows of Figure~\ref{fig:LDA} for $\W_2$ and $\HK_{\kappa=0.1}$ where we also record how many QCD and W jets there are in each $\lambda$ bin.
For both $\W_2$ and $\HK_{\kappa=0.1}$, the chosen jets transition from having a single mode to having two modes as $\lambda$ increases, suggesting that the direction $t$ successfully encodes the major topological difference between two-prong W jets and more diffuse single-prong QCD jets. A clearer separation is obtained for $\HK_{\kappa=0.1}$, whose class purity is slightly higher than $\W_2$ in each $\lambda$ bin.

\begin{figure}
	\centering
	\includegraphics[width=1.0\textwidth]{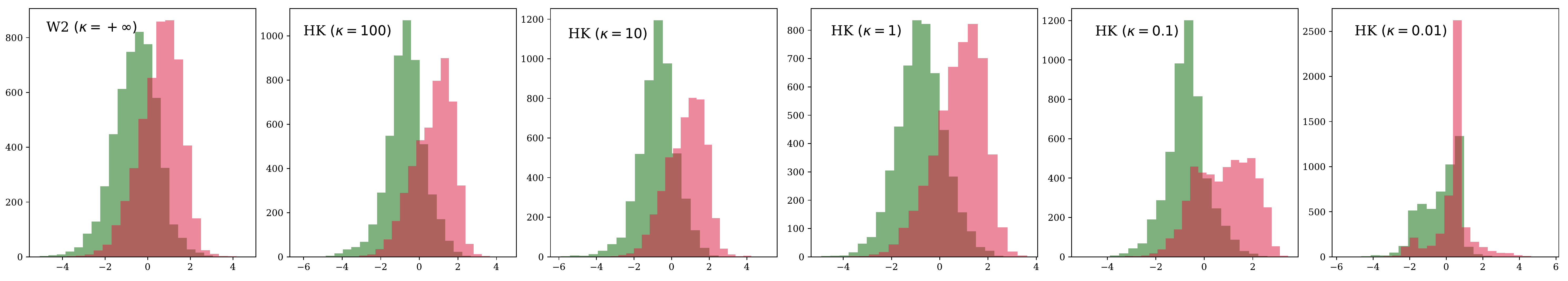}
	\includegraphics[width=1.0\textwidth]{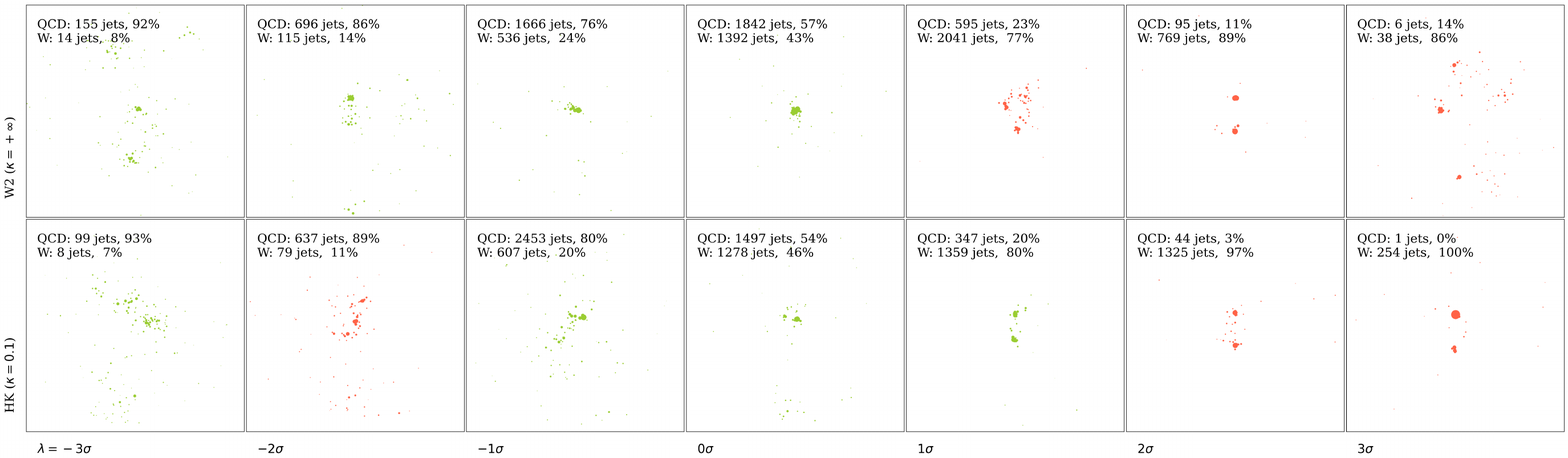}
	\caption{Upper: Histograms of the distribution of the LDA projection coordinate $\langle x_i-z,t \rangle$ with various OT distances. Lower two rows: Displays in the $y-\phi$ plane ($-1 \leq y \leq 1, -1 \leq \phi \leq 1$) of jets $x_i$ such that $\langle x_i-z,t \rangle$ is closest to $\lambda$ where $\lambda=-3$ to $3 \sigma$. In all plots, red denotes W jets and green is for QCD jets.}
	\label{fig:LDA}
\end{figure}

\paragraph{Classification}
In addition to LDA, we consider $k$-nearest neighbours (kNN) and kernel support vector machines (SVM) as two simple supervised classifiers. For kNN, we test $k$ (number of neighbours to consider) in $[10, 200]$ with an increment of 10. For SVM we use the radial basis function kernel $\exp[-\gamma d(x, x')^2]$ and test $11 \times 11 = 121$ pairs of $C$ and $\gamma$ in $[10^{-5}, 10^5]^2$ where $C$ regulates the strength of the penalty term. A training set of 5000 jets, a validation set of 2500 jets and a test set of 2500 jets are used for the two models in order to tune the hyperparameter(s). For LDA, the training and validation sets are instead merged. Details of the classifiers are expounded in \cite[Section 4]{Cai_2020}. To evaluate model performance, the AUC score (in $[0, 1]$) is given, where 1 indicates a perfect classifier and 0.5 corresponds to random guessing. Table~\ref{tab:NumericsClassifierResults} summarizes the results, including true positive rate (TPR) and false positive rate (FPR), for various $\kappa$ values. The approximate run time on Google Colab is also given to demonstrate the practicality of the linear framework.

We see that AUC peaks when $\kappa=0.1$ for LDA and $0.5$ for both kNN and SVM, with a relative increase (with respect to the linear Wasserstein baseline) in performance of 10.2\% for LDA, 3.4\% for kNN, and 1.7\% for SVM. The gains seem small but are indeed rather significant in the context of jet tagging, given that the AUC increase of using a large neural network over the optimal transport approach is only about 2\% \cite{ColliderMetric2019}.
In our experiments the gain of $\HK$ over $\W_2$ is stronger on the relatively simplistic classifiers kNN and LDA and not as pronounced on the more sophisticated SVM classifier. This suggest that the $\W_2$ representation does contain almost as much information about the jet class as the $\HK$ representations and sufficiently complex classifiers can extract it. In light of interpretability of classification results it may still be preferable to choose a representation where also simpler methods work well.
Compared to $\W_2$, the $\HK$ embedding requires the tuning of the parameter $\kappa$. The additional (computational) complexity of this step is however manageable. Based on Remark \ref{rem:HKLengthScale} and the discussions in this section a rough estimate for $\kappa$ can be obtained from physical intuition. This estimate can subsequently be refined by cross validation. Since Table \ref{tab:NumericsClassifierResults} indicates that the classification behaviour is relatively robust with respect to $\kappa$ over almost one order of magnitude, a coarse cross validation parameter search is sufficient.

In conclusion, this suggests that allowing mass to be generated and annihilated rather than only transported can have a positive effect on picking out W jets from QCD background. Further analysis still needs to be done to better understand the physical significance of $\HK$ with different $\kappa$ values and the relation of the optimal $\kappa$ with other physical length scales intrinsic to the problem. This is a topic under our current investigation. 

\begin{table}[hbt]
	\centering
	\caption{Results for the W vs.~QCD jet tagging task using LDA, kNN and SVM on the (unbalanced) linearized OT embeddings for various length scale parameters $\kappa$ ($\kappa=+\infty$ denotes balanced $\W_2$).}
	\label{tab:NumericsClassifierResults}
		{\scriptsize
		\begin{tabular}{|c|c|c|c|c|c|c|c|c|c|c|c|c|}
			\hline
			\multicolumn{2}{|c|}{length scale $\kappa$} & $+\infty$ & 100 & 10 & 5 & 1 & 0.7 &
				0.5 & 0.3 & 0.1 & 0.05 & 0.01\\
			\hline \hline
			\multirow{4}{*}{\textbf{LDA}} & \textbf{AUC} & \textbf{0.694} & \textbf{0.733} & \textbf{0.746} & \textbf{0.747} & \textbf{0.752} & \textbf{0.751} & \textbf{0.748} & \textbf{0.760} & \textbf{0.765} & \textbf{0.763} & \textbf{0.642}\\
			& TPR & 0.684 & 0.684 & 0.703 & 0.721 & 0.724 & 0.740 & 0.736 & 0.692 & 0.704 & 0.731 & 0.770\\
			& FPR & 0.296 & 0.218 & 0.211 & 0.226 & 0.220 & 0.239 & 0.239 & 0.171 & 0.174 & 0.205 & 0.486\\
			\cline{2-13}
			& run time & \multicolumn{11}{|c|}{several seconds}\\
			\hline
			\multirow{5}{*}{\textbf{kNN}} & \textbf{AUC} & \textbf{0.821} & \textbf{0.818} & \textbf{0.819} & \textbf{0.818} & \textbf{0.829} & \textbf{0.841} & \textbf{0.849} & \textbf{0.847} & \textbf{0.821} & \textbf{0.772} & \textbf{0.671} \\
			& TPR & 0.771 & 0.763 & 0.768 & 0.763 & 0.760 & 0.791 & 0.798 & 0.809 & 0.821 & 0.783 & 0.733\\
			& FPR & 0.128 & 0.127 & 0.130 & 0.126 & 0.102 & 0.110 & 0.100 & 0.114 & 0.181 & 0.238 & 0.390\\
			\cline{2-13}
			& hyperpar.~$k$ & 30 & 20 & 30 & 20 & 10 & 20 & 10 & 20 & 10 & 10 & 30 \\
			\cline{2-13}
			& run time & \multicolumn{11}{|c|}{1.5 hours}\\
			\hline
			\multirow{6}{*}{\textbf{SVM}} & \textbf{AUC} & \textbf{0.842} & \textbf{0.842} & \textbf{0.842} & \textbf{0.841} & \textbf{0.849} & \textbf{0.851} & \textbf{0.856} & \textbf{0.853} & \textbf{0.845} & \textbf{0.806} & \textbf{0.694} \\
			& TPR & 0.817 & 0.819 & 0.817 & 0.819 & 0.823 & 0.829 & 0.832 & 0.829 & 0.788 & 0.741 & 0.787 \\
			& FPR & 0.133 & 0.134 & 0.134 & 0.137 & 0.126 & 0.127 & 0.120 & 0.124 & 0.099 & 0.128 & 0.401 \\
			\cline{2-13}
			& hyperpar.~$C$ & 1 & 1 & 1 & 1 & 1 & 1 & 1 & 1 & 1 & 10 & 10 \\
			& hyperpar.~$\gamma$ & 100 & 100 & 100 & 100 & 100 & 100 & 100 & 100 & 1000 & 1000 & 100000\\
			\cline{2-13}
			& run time & \multicolumn{11}{|c|}{5 hours}\\
			\hline
		\end{tabular}
		}
\end{table}

We repeat the analysis (without $\kappa=0.7, 0.3$) on two additional datasets, each again containing 10k W and QCD jets simulated in the same way. We thus gain a quantitative, though rough, understanding on the fluctuation of AUC: the deviation is no larger than $10\%$ and the general trend is the same for all datasets, see Figure~\ref{fig:QCDref_AUC}. Moreover, we investigated the impact of the reference measure on classification performance by choosing a different reference which is the linear mean of all QCD jets in the dataset after rasterization to a grid, shown in the upper row of Figure~\ref{fig:QCDref_AUC}. We then analyze the three datasets using their respective QCD references, the AUC curves are shown in Figure~\ref{fig:QCDref_AUC}. We observe that in general an adapted choice of the reference measure slightly improves model performance in the best $\kappa$ range, yet the performance deteriorates faster when $\kappa \to 0.01$ compared to the uniform measure. On the whole, the classification performance using either reference measure is comparable to each other. 

\begin{figure}[hbt]
	\centering
	\includegraphics[width=0.6\textwidth]{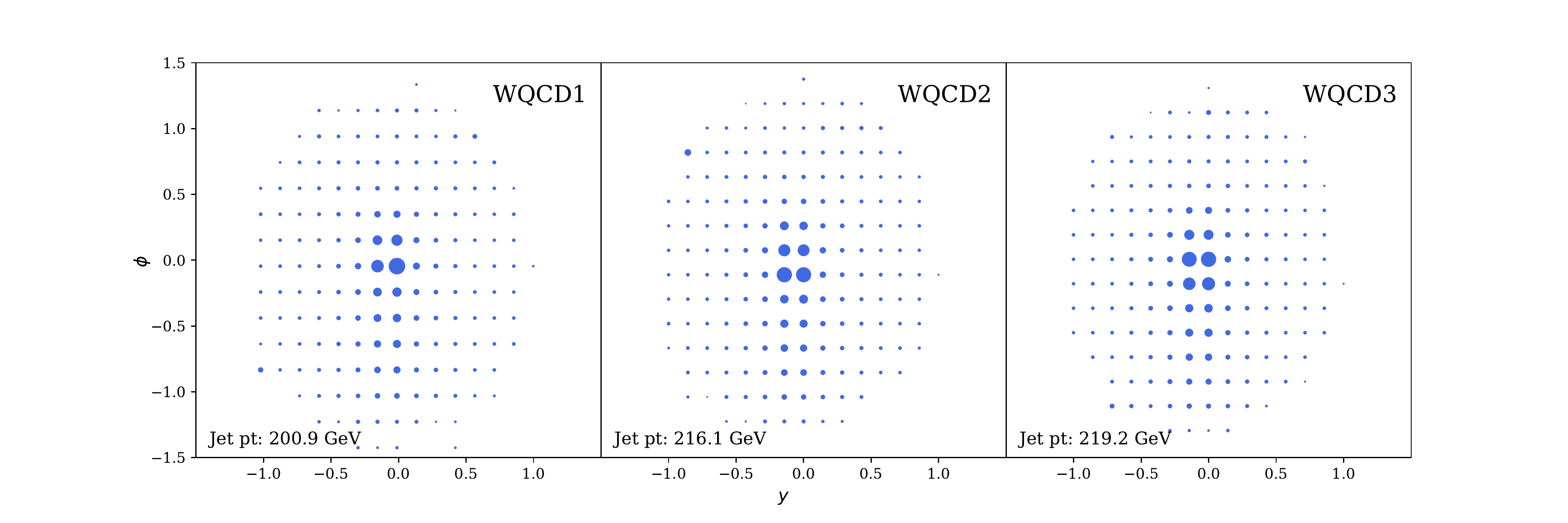}
	\includegraphics[width=0.95\textwidth]{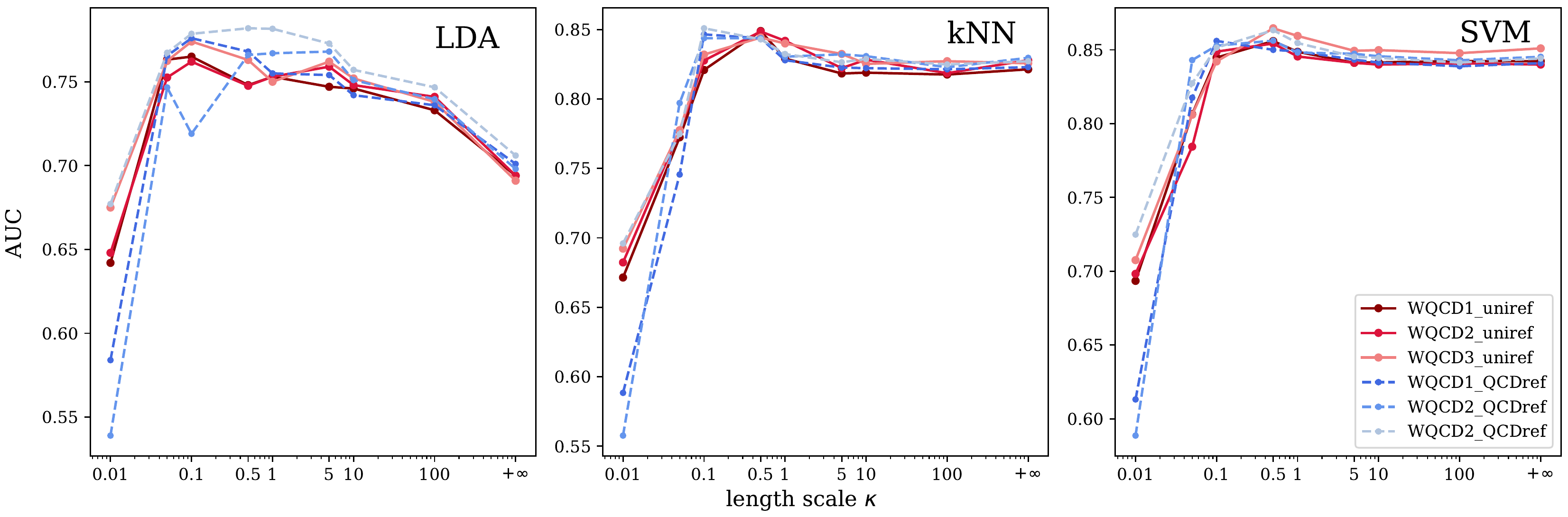}
	\caption{Upper: QCD reference jets in the $y-\phi$ plane for the three W vs.~QCD datasets, obtained by averaging all QCD jets in the respective dataset. Note that WQCD1 is the same dataset used in Table~\ref{tab:NumericsClassifierResults}. Lower: AUC scores for LDA, kNN and SVM on the three datasets with $\W2$ and $\HK$ metrics of various $\kappa$'s. Red solid lines show the results using the uniform reference measure, whereas blue dashed lines are obtained using the QCD reference measures.}
	\label{fig:QCDref_AUC}
\end{figure}


\section{Conclusion and outlook}
In this article we have studied the linearization of the Hellinger--Kantorovich metric by locally approximating its weak Riemannian structure with its tangent space. In particular, we have introduced explicit forms of logarithmic and exponential map and shown that they behave as expected along geodesics that connect samples with the reference point. We outlined how this can be leveraged numerically in data analysis problems and demonstrated that it compares favourably to the linearized Wasserstein-2 distance while we argue that from a practical algorithmic perspective the additional complexity is low.
Loosely speaking, one must apply an unbalanced version of the Sinkhorn algorithm, adjust the formula for the initial velocity field, accommodate an additional scalar mass change field, and finally fix a single real-valued length-scale parameter by validation on the data.

For the benefit of concrete numerical examples we have postponed additional analytic questions to future work. These include the study of the precise range of the logarithmic map, its discontinuity behaviour, a more careful look at the singular measure-valued component, and the domain of the exponential map.
In particular, the linearization in the Wasserstein space is closed under convex combinations which means that, if $\{v_i\}_{i=1}^n$ are a set of Wasserstein linear embeddings, any $\tilde{v}$ in the convex hull of $\{v_i\}_{i=1}^n$ is in the domain of the exponential map and hence one can generate a new measure via $\tilde{\mu}=\Exp_{\W}(\tilde{v})$ (see~\cite{park18} for applications to data augmentation).
Identifying operations under which the Hellinger--Kantorovich linearisation is closed is left open for future work.

Another open problem is to quantitatively bound the accuracy of the linear approximation of the Hellinger--Kantorovich distance.
Answering this question requires one to estimate the curvature of the Hellinger--Kantorovich space.
In fact, this question is still largely open for the Wasserstein space, however recent work~\cite{moosmuller21} has provided a bound in the Wasserstein linearisation with respect to certain perturbations.
Instead of attempting to quantify $\W_2(\mu_1,\mu_2)\approx \WLin(\mu_0;\mu_1,\mu_2)$ one can instead seek upper and lower bounds of $\WLin(\mu_0;\mu_1,\mu_2)$ in terms of $\W_2(\mu_1,\mu_2)$ and bi-H\"older bounds of this form have appeared in~\cite{delalande21,gigli11,merigot20}.

Stability of the Wasserstein optimal transport maps with respect to discretisation (and therefore the stability of the linear Wasserstein distance with respect to discretisation) was established in~\cite{berman20}.
Extending this to the Hellinger--Kantorovich distance is another possible avenue for future work.

On the practical side we seek to further study the application to collider physics and other examples.
\vspace{\baselineskip}

\paragraph{Acknowledgements}
The work of TC was supported in part by the Department of Energy under Grant No. DE-SC0011702. TC and JC would like to thank Nathaniel Craig and Katy Craig for insightful conversations about the application on jet physics and comments on the manuscript.
BS was supported by the Emmy-Noether Programme of the Deutsche Forschungsgemeinschaft (DFG). MT is grateful for the support of the Cantab Capital Institute for the Mathematics of Information and Cambridge Image Analysis at the University of Cambridge, and has received funding from the European Research Council under the European Union's Horizon 2020 research and innovation programme grant agreement No 777826 (NoMADS) and grant agreement No 647812.  


\bibliography{references}{}
\bibliographystyle{siamplain}

\end{document}